\numberwithin{equation}{section}
\renewcommand\thefigure{\thesection.\@arabic\c@figure}
\renewcommand\thetable{\thesection.\@arabic\c@table}
\newtheorem{theorem}{Theorem}[section]
\newtheorem{lemma}[theorem]{Lemma}
\newtheorem{proposition}[theorem]{Proposition}
\newtheorem{corollary}[theorem]{Corollary}
\newtheorem{theo}[theorem]{Theorem}
\newtheorem{prop}[theorem]{Proposition}
\newcommand{\RR}{{\mathbb R}}
\newcommand{\ZZ}{{\mathbb Z}}
\newcommand{\LL}{{\mathbb L}}
\newcommand{\PP}{{\mathbb P}}
\newcommand{\EE}{{\mathbb E}}
\newcommand{\Aa}{{\mathcal A}}
\newcommand{\Bb}{{\mathcal B}}
\newcommand{\Ff}{{\mathcal F}}
\newcommand{\Ll}{{\mathcal L}}
\newcommand{\mc}[1]{{\mathcal #1}}
\newcommand{\mf}[1]{{\mathfrak #1}}
\newcommand{\mb}[1]{{\mathbf #1}}
\newcommand{\bb}[1]{{\mathbb #1}}
\let\a=\alpha    \let\d=\delta  \let\e=\varepsilon
 \let\g=\gamma     
\let\l=\lambda
\let\o=\omega
\let\p=\pi  
\let\s=\sigma  
\let\t=\tau
\renewcommand{\theenumi}{(\roman{enumi})}
\renewcommand{\labelenumi}{\theenumi}
\newcommand{\<}{\langle}
\renewcommand{\>}{\rangle}
\title[Exclusion process with random conductances]{Hydrodynamic
  behavior of one dimensional subdiffusive exclusion processes with
  random conductances}
\author{A. Faggionato} \address{Dipartimento di Matematica
  ``G. Castelnuovo", Universit\`a ``La Sapienza'', P.le Aldo Moro 2,
  00185 Roma, Italy. \newline
  e-mail: \rm \texttt{faggiona@mat.uniroma1.it}} 
\author{M. Jara}
\address{\noindent IMPA, Estrada Dona Castorina 110, CEP 22460 Rio de
  Janeiro, Brasil \newline e-mail: \rm \texttt{monets@impa.br} }
\author{C. Landim} \address{\noindent IMPA, Estrada Dona Castorina
  110, CEP 22460 Rio de Janeiro, Brasil and CNRS UMR 6085,
  Universit\'e de Rouen, UMR 6085, Avenue de l'Universit\'e, BP.12,
  Technop\^ole du Madrillet, F76801 Saint-\'Etienne-du-Rouvray,
  France.  \newline e-mail: \rm \texttt{landim@impa.br} }
\begin{document}

\begin{abstract}
  Consider a system of particles performing nearest neighbor random
  walks on the lattice $\ZZ$ under hard--core interaction. The rate
  for a jump over a given bond is direction--independent and the
  inverse of the jump rates are i.i.d. random variables belonging to
  the domain of attraction of an $\a$--stable law, $0<\a<1$. This
  exclusion process models conduction in strongly disordered
  one-dimensional media.  We prove that, when varying over the
  disorder and for a suitable slowly varying function $L$, under the
  super-diffusive time scaling $N^{1 + 1/\alpha}L(N)$, the density
  profile evolves as the solution of the random equation $\partial_t
  \rho = \mf L_W \rho$, where $\mf L_W$ is the generalized
  second-order differential operator $\frac d{du} \frac d{dW}$ in
  which $W$ is a double sided $\a$--stable subordinator.  This result
  follows from a quenched hydrodynamic limit in the case that the
  i.i.d. jump rates are replaced by a suitable array $\{\xi_{N,x} :
  x\in\bb Z\}$ having same distribution and fulfilling an a.s.
  invariance principle. We also prove a law of large numbers for a
  tagged particle.

\bigskip

\noindent {\em Key words:} interacting particle system, hydrodynamic
limit, $\a$--stable subordinator, random environment, subdiffusion,
quasi-diffusion.

\smallskip

\noindent {\em MSC-class:} Primary 60K35, 60K37. Secondary 82C44.
\end{abstract}

\maketitle

\section{Introduction}
\label{sec0}

Fix a sequence of positive numbers $\xi = \{\xi_x : x \in \bb Z\}$ and
consider the random walk $\{X_t : t\ge 0\}$ on $\bb Z$ which jumps
from $x$ to $x+1$ and from $x+1$ to $x$ at rate $\xi_x$. Assume that
\begin{equation}
\label{g08}
\lim_{\ell\to\infty} \frac 1\ell \sum_{x=1}^\ell \xi_x^{-1} \; =\;
\gamma\;, \quad \lim_{\ell\to\infty} \frac 1\ell \sum_{x=-\ell}^{-1}
\xi_x^{-1} \; =\; \gamma
\end{equation}
for some $0<\gamma <\infty$. It is well known that 
\begin{equation}
\label{g04}
X_{tN}/\sqrt{N} \quad\text{converges in distribution to} \quad
\gamma^{-1} B_t\;,
\end{equation}
as $N\uparrow\infty$, where $B_t$ is a standard Brownian motion (cf.
\cite{KK}, \cite{F} for references).

In the particular case where $\{\xi_x : x\in \bb Z\}$ is an ergodic
sequence of positive random variables (e.g. i.i.d. positive random
variables) with $ E[\xi_0^{-1}]<\infty$, a quenched (i.e.\!  a.s.\!
with respect the environment $\xi$) invariance principle follows from
the previous result setting $\gamma = E[\xi_0^{-1}]$. Notice that the
noise survives in the limit only through the expected value of
$\xi_0^{-1}$. Moreover, in the non trivial case where $\xi$ is not
constant, by Jensen's inequality, the diffusion coefficient
$E[\xi_0^{-1}]^{-1}$ is strictly smaller than the expected value of
the conductance $\xi_x$.

If the positive i.i.d. random variables $\{\xi_x : x\in \bb Z\}$ are
such that $E[\xi_0^{-1}]=\infty$, the invariance principle \eqref{g04}
suggests that the random walk remains freezed in the diffusive scale.
As discussed in \cite{abso}, a natural assumption is to suppose that
the distribution of $\xi_0^{-1}$ belongs to the domain of attraction
of an $\alpha$-stable law, $0<\a<1$. In this case, the partial sums of
the sequence $\{\xi_x^{-1}\}$ converge in law, when properly rescaled,
to a double sided $\a$--stable subordinator $W$: there exists a slowly
varying function $L(\cdot)$ such that, for each $u$ in $\bb R$,
\begin{equation}
\label{g05}
\lim_{N \to \infty} \frac{S (\lfloor Nu \rfloor )}{N^{1/\alpha}L(N)}
\;=\; W(u) \quad\text{in law}\;,
\end{equation}
where $\lfloor a \rfloor = \max \{ k\in \bb Z: k\le a\}$ is the
integer part of $a$, and where the function $S: \bb Z \rightarrow \bb
R$ is defined as
\begin{equation}
\label{g06}
S(j)=
\begin{cases}
\sum _{x=0} ^{j-1} \xi_x ^{-1}\;, & j>0\;, \\
0\;, & j=0\;,\\
-\sum _{x=j} ^{-1} \xi_x ^{-1} \;,& j<0\;.
\end{cases}
\end{equation}

This case is treated in \cite{KK}, where the authors prove that
varying over the environment $\xi$ and for a suitable slowly varying
function $L'(\cdot)$, the process $X ( N^{1+1/\alpha}$ $L'(N)
t)/N$ converges weakly in the Skorohod space $D([0,\infty),\RR)$
to a continuous self-similar process.

In order to prove a quenched limiting behavior we need the
limit \eqref{g05} to be almost sure as well. To transform the
convergence in law into an almost sure convergence we can, for
instance, replace the sequence $\{\xi_x : x\in \bb Z\}$ by an array
$\{\xi_{N,x} : x\in \bb Z\}$, $N\ge 1$, which has the same
distribution as $\{\xi_x : x\in \bb Z\}$ for each $N\ge 1$.  The most
natural array can be constructed as follows. Define the nonnegative
function $G$ on $[0,\infty)$ by $ P ( W(1) >G(x))=P(\xi_0^{-1}>x)$.
Since $W(1)$ has a continuous strictly increasing distribution, $G$ is
well defined, nondecreasing and right continuous. Call $G^{-1}$ the
nondecreasing right continuous generalized inverse of $G$ and set
\begin{equation*}
\xi_{N,x}^{-1} \;=\; G^{-1}\left(
N ^{1 / \a} \{ W (x+1/N) - W(x/N)\} \right)\, .
\end{equation*}
Then (cf.\! \cite[Section 3]{fin}) the array $\{\xi_{N,x} : x\in \bb
Z\}$, defined as function of the subordinator $W$, has the same
distribution of $\{\xi_x : x\in \bb Z\}$ and
\begin{equation*}
\lim _{N\rightarrow \infty}
\frac{ S_N(\lfloor Nu \rfloor) }{N^{1/\a}} = W(u)\;, 
\quad \forall u \in \bb R\;, \quad \text{for a.a. $W$}\;,
\end{equation*}
where $S_N$ is defined as the function $S$ in (\ref{g06}), with
$\xi_x^{-1}$ replaced by $\xi_{N,x}^{-1}$.

Fix $N\ge 1$ and consider the random walk $X_N(t)$ which jumps from
$x$ to $x+1$ and from $x+1$ to $x$ at rate $\xi_{N,x}$. Kawazu and
Kesten proved in \cite{KK} that for a.a.\!  realizations of the
environment and for a suitable slowly varying function $L'(\cdot)$,
$X_N (N^{1+1/\alpha} L'(N) t)/N$ converges weakly to a process $\{Y_t
: t\ge 0\}$. The first main result of this article states that $\{Y_t
: t\ge 0\}$ is a Markov process with continuous paths which is
\emph{not} strongly Markov. In particular, $\{Y_t : t\ge 0\}$ is not a
Feller process. Furthermore, we show in Section \ref{sec2} that the
generator $\mf L_W$ of $\{Y_t : t\ge 0\}$ is the generalized
second-order differential operator
\begin{equation*}
\mf L_W \;=\; \frac d{du} \frac d{dW} \; .
\end{equation*}
We point out that in contrast with the i.i.d. case with $E
[\xi_0^{-1}]<\infty$, the noise $W$ entirely survives in the limit.
In fact, even the generator depends on the realization $W$ of the
subordinator.  \medskip

The second main object of this article is the hydrodynamic behavior of
a one-dimensional simple exclusion process with conductances $\xi =
\{\xi_x : x \in \bb Z\}$.  This is the Markov process on $\{0,1\}^{\bb
  Z}$ which can be informally described as follows. Denote by $\eta$
the configurations of $\{0,1\}^{\bb Z}$ so that $\eta(x) =0$ if site
$x$ is vacant and $\eta(x)=1$ otherwise. We start from a configuration
with at most one particle per site. At rate $\xi_x$ the occupation
variables $\eta(x)$, $\eta(x+1)$ are exchanged. The generator $L$ of
this Markov process acts on local functions $f$ as
\begin{equation}
\label{g4}
L f(\eta) \;=\; \sum_{x \in \bb Z} \xi_x
[f(\sigma^{x,x+1} \eta) - f(\eta) ]\;,
\end{equation}
where $\sigma^{x,x+1} \eta$ is the configuration obtained from $\eta$
by exchanging the variables $\eta(x)$, $\eta(x+1)$:
\begin{equation}
\label{g5}
(\sigma^{x,x+1} \eta)(y) \;=\;
\begin{cases}
\eta (x+1) & \text{ if } y=x,\\
\eta (x) & \text{ if } y=x+1,\\
\eta (y) & \text{ otherwise}.
\end{cases}
\end{equation}

The hydrodynamic behavior of this exclusion process has been derived
in \cite{F}, \cite{jl2} under the law of large numbers \eqref{g08} for
the inverse of the conductances, and previously in \cite{n} under more
restrictive assumptions: Assume \eqref{g08} for some
$0<\gamma<\infty$. Fix a continuous initial profile $\rho_0 : \bb R
\to [0,1]$ and consider a sequence of probability measures $\mu^N$ on
$\{0,1\}^{\bb Z}$ such that
\begin{equation*}
\lim_{N\to\infty}
\mu^N \Big\{ \, \Big\vert \frac 1N \sum_{x\in\bb Z} H(x/N) \eta(x)
- \int H(u) \rho_0(u) du \Big\vert > \delta \Big\} \;=\; 0
\end{equation*}
for every $\delta>0$ and every continuous functions $H$ with compact
support. As proven in \cite{F}, \cite{jl2}, if $\bb P_{\mu^N}$ stands
for the probability measure on the path space $D(\bb R_+, \{0,1\}^{\bb
  Z})$ induced by the initial state $\mu^N$ and the Markov process
speeded up by $N^2$, then for any $t\ge 0$,
\begin{equation*}
\lim_{N\to\infty}
\bb P_{\mu^N} \Big\{ \, \Big\vert \frac 1N \sum_{x\in\bb Z} H(x/N) \eta_t(x)
- \int H(u) \rho(t,u) du \Big\vert > \delta \Big\} \;=\; 0
\end{equation*}
for every $\delta>0$ and every continuous functions $H$ with compact
support. Here $\rho$ is the solution of the heat equation
\begin{equation*}
  \partial_t \rho = \gamma^{-1} \partial^2_u \rho
\end{equation*}
with initial condition $\rho_0$, $t$ stands for the time variable and
$u$ for the macroscopic space variable.

In view of the discussion of the first part of this introduction,
assume that the environment consists of a sequence of i.i.d.  random
variables $\{\xi_x : x \in \bb Z\}$ and that the distribution of
$\xi^{-1}_0$ belongs to the domain of attraction of an $\a$--stable
law, $0<\alpha < 1$. Recall the definition of the array $\{\xi_{N,x} :
x\in \bb Z\}$. By extending the methods developed in \cite{F},
\cite{jl2}, we can prove a quenched hydrodynamic limit for the
exclusion process with random conductance given by the array
$\{\xi_{N,x} : x\in \bb Z\}$. Theorem \ref{mt1} below states that, for
almost all trajectories $W$, the density profile of the exclusion
process with random conductances $\{ \xi_{N,x} : x\in \bb Z \}$
evolves on the time scale $N^{1 + 1/\alpha}$ as the solution of
\begin{equation}
\label{g03}
\partial_t \rho \;=\; \mf L_W \rho \;,
\end{equation}
where $\mf L_W$ is the generalized second order differential operator
defined above.  From this quenched result we deduce in Theorem
\ref{mt2} an annealed result for the original exclusion process with
random conductances given by $\{\xi_x : x\in \bb Z\}$.

The asymptotic evolution of a tagged particle is also examined.  Under
some assumptions on the solution of the hydrodynamic equation
\eqref{g03}, we show that the asymptotic behavior $u(t)$ of a tagged
particle initially at the origin is described by the differential
equation
\begin{equation*}
\frac d{dt+} u (t) \;=\;
\left\{
  \begin{array}{ll}
{\displaystyle - \frac 1{\rho_t(u(t))} \frac {d \rho_t}{dW} (u (t))} &
{\displaystyle \text{if } \frac {d \rho_t}{dW} (u (t)) < 0} \\
{\displaystyle - \frac 1{\rho_t(u (t)-)} \frac {d \rho_t}{dW} (u (t))} &
{\displaystyle \text{if } \frac {d \rho_t}{dW} (u (t)) > 0} \\
\;\;0 & {\displaystyle \text{otherwise}}\;.
  \end{array}
\right.
\end{equation*}
In this formula $\rho_t$ is the solution of \eqref{g03}, the
differential $d/dW$ is defined by \eqref{malditesta}, and $(df /dt+)
(t_0)= \lim_{\epsilon \downarrow 0} \epsilon^{-1} [f(t_0+\epsilon) -
f(t_0)]$.

\section{Notation and results}
\label{sec1}

We state in this section the main results of the article.  In what
follows, for simplicity of notation, we assume that $\{\xi_x^{-1} :
x\in \bb Z\}$ is a sequence of i.i.d.\! non-negative $\a$--stable random
variables, $0<\a<1$, defined on some probability state space $(E, \mc
E, \mf Q)$, i.\! e., we assume that
\begin{equation}
\label{leggestabile}
E_{\mf Q} [ \exp \{- \lambda \xi_x ^{-1 } \}]= \exp [ -c_0 \lambda
^\alpha ]\, , \qquad \lambda >0\,,
\end{equation}
for some positive constant $c_0$.  The reader can check that all the
results and proofs presented below can be easily extended to the
general case where $\xi_x^{-1}$ belongs to the domain of attraction of
an $\a$--stable law.

Let us fix some basic notation: given an interval $I\subset \RR $
and a metric  space $\mathbb{Y}$,  we write  $D(I,\mathbb{Y})$ for
the space of c\`adl\`ag functions $f:I\rightarrow \mathbb{Y}$,
endowed with the Skorohod metric $d_S$ \cite{bil}, \cite{ek}, and we
denote by $\bb D (f)$  the  set of discontinuity points of $f$. If
$\mathbb{Y}=\bb R$, the generalized inverse of $f$ is defined as
\begin{equation*}
f^{-1} (u) =\sup \{ v\in I \,:\, f(v)\leq u \} .
\end{equation*}
Moreover, we denote by $C(\bb Y )$, $C_b (\bb Y )$, $C_c (\bb Y )$,
$C_0 (\bb Y)$ , respectively, the space of continuous real functions
on $\bb Y$, the space of bounded continuous real functions on $\bb Y$,
the space of continuous real functions on $\bb Y$ with compact support
and the space of bounded continuous real functions on $\bb Y$
vanishing at infinity, i.e. such that for any $\e>0$ the function has
modulus smaller than $\e$ outside a suitable bounded subset $U\subset
\bb Y$.

In what follows, we will introduce several processes defined in
terms of the Brownian motion or the  $\a$--stable subordinator.
To this aim, let $B$ be the Brownian motion with $\EE [B (t)^2]=2t$,
 defined on some probability space $\left(\mathbb{X} ,\mathbb{F},
\PP\right)$, and let $L (t,y)$ be the local time of $B$.
 Then,
$\PP$--almost surely,
\begin{equation*}
\int _a ^b L(t,y) dy = \int _0 ^t \mb 1 \{ a\leq B(s)\leq b\} ds
\end{equation*}
for all $t\ge 0$, $a\leq b$. In this formula, $\mb 1\{A\}$ stands
for the indicator function of the set $A$.

Let $W$ be a double sided $\a$--stable subordinator defined on some
probability space $(\Omega, \Ff, P)$ \cite[Section III.2]{b}.  Namely,
$W(0)=0$, $W$ has non-negative independent increments such that for
all $s<t$
\begin{equation*}
E\Big[ \exp \big\{ -\lambda [ W(t)-W(s)] \big\} \Big] \;=\;
\exp\{-c_0\lambda ^\a (t-s)\}
\end{equation*}
for all $\lambda >0$ and the same positive constant $c_0$ as in
\eqref{leggestabile}. The sample paths of $W$ are c\`adl\`ag,
strictly increasing and of pure jump type, in the sense that
\begin{equation*}
W(u) \;=\; \sum_{0<v\le u} \{ W(v) - W(v-)\}\;.
\end{equation*}
The jumps at location $(u,W(u) - W(u-))$ have a Poisson distribution
with intensity $c_0^{1/\a} w^{-\alpha} du\,dw$ on $\bb R \times \bb
R_+$. Given a  realization of the subordinator $W$,   denote by
$\nu$ the Radon measure $d W^{-1}$, so that
\begin{equation*} 
\int f(u) \nu (du) \;=\; \int f( W(u) ) \, du
\end{equation*}
for all $f\in C_c (\RR)$.   The support of $\nu$ is given by
\begin{equation*}
\text{supp}(\nu) \;=\; \overline{W(\RR)} \;=\; \{ W(x),
W(x-)\,:\, x\in \RR\}\;.
\end{equation*}
Finally, given a Radon measure $\mu$ on $\bb R$ and a Borel function
$f$, we set
\begin{equation*}
\int _u ^v f(x) \mu (dx) \;=\;
\begin{cases}
   \int _{(u,v]}f(x) \mu(dx) &  \text{ if } u\leq v\, ,\\
  -\int_{(v,u]} f(x) \mu(dx) & \text{ if  } u>v\,.
 \end{cases}
\end{equation*}

\subsection{Random walk with random conductances}
\label{reginetta}

Given $N\ge 1$, $x\in \ZZ$ and a realization $\{\xi_x : x\in \bb Z\}$
of the environment, consider the random walk $X^\xi _N(t|x)$ on $\ZZ$
having starting point $x$ and generator $\LL_{\xi, N}$ given by
\begin{equation*}
(\LL_{\xi,N} f) (x/N) \;=\;  N^{1+1/\alpha} \, \xi_x 
\, \big\{ f(x+1)-f(x) \big\}  
\;+\; N^{1+1/\alpha} \, \xi_{x-1}\, \big\{ f(x-1) - f(x) \big\}\; .
\end{equation*}

To describe the asymptotic behavior of this random walk, fix a
realization of the subordinator $W$ and set
\begin{equation*}
\psi (t|u) = \int _\RR L(t,v-u) \nu (dv) \;, \quad
\psi ^{-1}(t|u)=\sup\left\{ s\geq 0\,:\, \psi (s|u) \leq t
\right\}\;.
\end{equation*}
It is known \cite[V.2.11]{bg} that
\begin{equation*}
Z(t|u) \;=\; u + B \big( \psi ^{-1}(t|u)\big)\,, \qquad t\geq 0,\,
u\in \overline{W(\RR)}\,,
\end{equation*}
is a strong Markov process on $\overline{W(\RR)}$. Let $Y(t|u)$, with
$t\geq 0$ and $u\in \RR $, be the process defined by
\begin{equation*}
Y(t|u) \;=\; Y_W(t|u) \;=\; W^{-1}\Big( Z(\,t|W(u)\,) \Big)\;.
\end{equation*}
For $u$ in $\bb R$, set $\lceil u \rceil = \min \{ k\in \ZZ\,:\, k\geq
u \}$.  Kawazu and Kesten proved in \cite{KK} that the law of $X^\xi
_N(t|\lceil uN\rceil)/N$ averaged over the environment $\xi$ converges in
distribution to the law of $Y(t|u)$ averaged over $W$. 

We examine in Section \ref{sec2} the process $\{Y_t : t\ge 0\}$.  We
first show that for a.a.\! realizations of the subordinator $W$,
$\{Y_t : t\ge 0\}$ is Markov process with continuous paths which is
not strongly Markov and therefore not Feller.

The definition of the generator $\mf L_W = \frac d{du} \frac d{dW}$ of
the process $\{Y_t : t\ge 0\}$ requires some notation.  Fix a
realization of the subordinator $W$. Denote by $C_{W,b}(\bb R)$ (resp.
$C_{W,0}(\bb R)$) the set of bounded (resp. bounded which vanish at
$\pm \infty$) c\`adl\`ag functions $f:\bb R \to\bb R$ such that $\bb
D(f) \subset \bb D (W)$. $C_{W,0}(\bb R)$ is provided with the usual
sup norm $\|\cdot \|_\infty$. Let $\mf D_W$ be the set of functions
$f$ in $C_{W,0}(\bb R)$ such that
\begin{equation*}
f(x) \;=\; a \;+\; b W(x)\; +\; \int_0 ^x  dW(y) \int_0^y g(z)
dz\;\, \qquad \forall x \in \bb R\;,
\end{equation*}
for some function $g$ in $C_{W,0}(\bb R)$ and some $a$, $b$ in $\bb
R$.  One can check that this function $g$ is unique.
 
Define the linear operator $\mf L_W : \mf D_W \to C_{W,0}(\bb R)$ by
setting $\mf L_W f = g$. Formally,
\begin{equation*}
\mf L_W \;=\; \frac{d}{dx} \frac{d}{dW} \; \cdot
\end{equation*}

Alternatively, one can introduce the generalized derivative $\frac{
d}{dW}$ as follows
\begin{equation}
\label{malditesta}
\frac{d f}{dW} (x) = \lim_{\e\rightarrow 0} \frac{f(x+\e)
-f(x)}{W(x+\e) -W(x)}\;,
\end{equation}
if the above limit exists and is finite. Due to Lemma 0.9 in
\cite{dyn2}[Appendix], given a right continuous function $f$ and a
continuous function $h$, the following identities are equivalent
\begin{align}
& \frac{df}{dW} (x) \;= \;h(x)\;,\qquad \forall x \in \bb R\;,
\label{anna1}\\
& f (b)- f(a)= \int_{(a,b]} h(y) dW (y)\;, \qquad \forall
a<b\;.\label{anna2}
\end{align}
Hence, a function $f\in C_{W,0} (\bb R)$ belongs to $\mf D_W$ if and
only if $\frac{df}{dW}(x)$ is well defined and derivable, and
$\frac{d}{dx}\bigl(\frac{df}{dW}\bigr) \in C_{W,0}(\bb R)$. In this
case
\begin{equation*}
\mf L_W f \;=\; \frac{d}{dx} \frac{d}{dW}f \;=\;
\frac{d}{dx}\left(\frac{df}{dW}\right)\;.
\end{equation*}

We point out that a function $f$ in   $\mf D _W\cap C(\bb R)$  must
be constant, otherwise due to  (\ref{malditesta}) $(df/dW) (x)$
would be $0$ on $\bb D(W)$, which is a dense set of $\bb R$. Since
$df/dW (x)$ is derivable it must be $0$ everywhere. The equivalence
between (\ref{anna1}) and (\ref{anna2}) allows to conclude.

Denote by $\{P_t : t\ge 0\}$ the semigroup of the Markov process $Y_t$
so that for a bounded Borel function $H:\bb R\to \bb R$,
\begin{equation*}
  (P_t H)(u) \;=\; E[H(Y(t|u))] \,, \qquad u \in \RR\;.
\end{equation*}
In Section \ref{sec2} we prove

\begin{theorem}
\label{mt5}
The space $C_{W,0}(\bb R)$ is $P_t$--invariant and $\{P_t : t\geq
0\}$ is a strongly continuous contraction semigroup on $C_{W,0}(\bb
R)$ with infinitesimal generator $\mf L_W = \frac{d}{dx} \frac{d}{dW}$
defined on the domain $\mf D_W$.
\end{theorem}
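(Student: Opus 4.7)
The plan is to use the representation $Y(t|u) = W^{-1}(Z(t|W(u)))$ to transfer the problem to the quasi-diffusion $Z$ on $\overline{W(\RR)}$, which has speed measure $\nu$ and natural scale, and then apply the classical description of the generator of a generalized diffusion, adapted to the notation $\mf L_W = \frac{d}{du}\frac{d}{dW}$.

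The first step is to identify $C_{W,0}(\bb R)$ with $C_0(\overline{W(\RR)})$ via the bijection $H \leftrightarrow \tilde H$ defined by $\tilde H(W(x)) = H(x)$ and $\tilde H(W(x-)) = H(x-)$. The assumption $\bb D(H) \subset \bb D(W)$ is exactly what makes $\tilde H$ continuous on $\overline{W(\RR)}$, while the sup norms are preserved. Since $(P_t H)(u) = E[\tilde H(Z(t|W(u)))] = (Q_t \tilde H)(W(u))$, where $Q_t$ denotes the semigroup of $Z$, the statement reduces to showing that $Q_t$ is a strongly continuous contraction semigroup on $C_0(\overline{W(\RR)})$ whose generator acts as $\frac{d}{dx} \frac{d}{d\nu}$ on the image of $\mf D_W$. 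Contraction and positivity are immediate. Strong continuity follows, by density of compactly supported $\tilde H$ together with their uniform continuity, from the uniform-on-compacts convergence $Z(t|v) \to v$ as $t \downarrow 0$, itself a consequence of path continuity of $B$ and continuity of $v \mapsto \psi(\cdot|v)$.

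The delicate point is that $Q_t$ sends $C_0(\overline{W(\RR)})$ into itself, i.e. $(Q_t \tilde H)(v) \to 0$ as $|v| \to \infty$ inside $\overline{W(\RR)}$. This requires a uniform-in-$v$ tail estimate controlling how far $B$ can travel in the Brownian time $\psi^{-1}(t|v)$; I would establish it by exploiting the $P$--a.s. unboundedness of $\nu$ both above and below, which forces $\psi(s|v)$ to grow fast enough in $s$ uniformly as $|v| \to \infty$ on exceedingly likely events. This is the main technical obstacle.

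For the identification of the generator, take $f \in \mf D_W$ with $\mf L_W f = g$. The change of variables $w = W(y)$ in the integral representation of $f$ yields the corresponding $\tilde f$ in classical speed-measure form $\tilde f(x) = a + bx + \int_0^x dw \int_0^w \tilde g(y)\, \nu(dy)$ with $\tilde g = g \circ W^{-1}$. A martingale argument based on Tanaka's formula and the occupation density formula for $B$ (equivalently, the standard It\^o--McKean quasi-diffusion construction) shows that $\tilde f(Z(t|v)) - \tilde f(v) - \int_0^t \tilde g(Z(s|v))\,ds$ is a $P$--martingale, hence $Q_t \tilde f - \tilde f = \int_0^t Q_s \tilde g\,ds$; combined with the strong continuity established above applied to $\tilde g$, this gives $(P_t f - f)/t \to g$ uniformly. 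To see that $\mf D_W$ is the \emph{entire} domain, the plan is to verify the range condition of Hille--Yosida: for every $\lambda > 0$ and $h \in C_{W,0}(\bb R)$, solve the resolvent equation $\lambda f - \mf L_W f = h$ explicitly by first constructing two positive $\lambda$--eigenfunctions $\phi_{\pm}$ of $\mf L_W$ via a Picard iteration adapted to the $W$--differential (the $C_{W,0}$ analogue of $e^{\pm \sqrt{\lambda}\, x}$), then assembling $f$ by variation of parameters with respect to $dW$; together with the dissipativity of $\mf L_W$ (a maximum principle for c\`adl\`ag functions), this forces the domain of the generator of $P_t$ to coincide with $\mf D_W$.
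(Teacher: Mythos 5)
Your reduction is the same one the paper uses: identify $C_{W,0}(\bb R)$ isometrically with $C_0(\overline{W(\RR)})$ (the paper does this in two steps, via the completion $\bb R_W$ of $(\bb R,d_W)$ and the maps $\mf E$, $\mf P$, but it is the same identification as your $H\mapsto \tilde H$), and then transfer the statement to the strong Markov process $Z$ on $\overline{W(\RR)}$. The difference is what happens after the reduction. The paper at that point simply \emph{quotes} the quasi-diffusion theory of L\"obus (Theorems 1.2.1 and 3.3.1 of \cite{lo2} and Theorem 3.2 of \cite{lo1}): $\{Q_t\}$ is a strongly continuous contraction semigroup on $C_0(\overline{W(\RR)})$ whose generator is $\frac{d}{d\nu}\frac{d}{du}$ with domain exactly $D_W$. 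You instead propose to rebuild this theory from scratch. Your program is the standard Feller--It\^o--McKean--L\"obus one and would work, but the two steps you yourself flag as obstacles --- (a) that $Q_t$ maps $C_0(\overline{W(\RR)})$ into itself, and (b) that $\mf D_W$ is the \emph{whole} domain, via the range condition, the construction of the two positive $\lambda$-eigenfunctions, and dissipativity --- are precisely the substantive content of the cited theorems, and in your write-up they remain a plan rather than a proof. (For (b) one also has to check that $\pm\infty$ are natural boundaries, which is where the a.s.\ unboundedness of $\nu$ and of the scale enters; otherwise the homogeneous solutions could contaminate the resolvent.) So, as written, the proposal is a correct reduction followed by a correct but incomplete program for the part the paper outsources to the literature.

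Two smaller points. First, with your (natural) convention $\tilde H(W(x-))=H(x-)$, the identity $(P_tH)(u)=(Q_t\tilde H)(W(u))$ is not a pointwise identity of integrands: on the event $\{Z(t|W(u))=W(x_j-)\}$ one has $H(W^{-1}(W(x_j-)))=H(x_j)\neq H(x_j-)=\tilde H(W(x_j-))$ in general. You need this event to be null for $t>0$, which follows from the existence of a transition density with respect to the atomless measure $\nu$ (Theorem \ref{salomone}); the paper makes exactly this remark in the proof of Proposition \ref{nsf}, and your argument should too. Second, the generator of $Z$ is $\frac{d}{d\nu}\frac{d}{du}$ (derivative with respect to the speed measure of the scale derivative), not $\frac{d}{du}\frac{d}{d\nu}$ as you wrote; your integral representation $\tilde f(x)=a+bx+\int_0^x dw\int_0^w\tilde g(y)\,\nu(dy)$ is the correct one, so this is only a notational slip.
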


In particular, given $\rho _0$ in $C_0 (\bb R)$, the function
$\rho_W(t,u) = P_t \rho _0 (u)$, $t\geq 0$, $u$ in $\bb R$, is
continuous in $t$ and c\`adl\`ag in $u$, $ \bb D (\rho _W (t, \cdot))
\subset \bb D (W)$ and $\rho _W (0, u )= \rho _0 (u)$.  Moreover, we
show in \cite{fjl2} that $\rho _W (t,u)$ belongs to $\mf D _W$, $\rho
_W (t,u)$ is strictly positive and
\begin{equation}
\label{g09}
\frac \partial{\partial t} \rho _W (t,u) \;=\;\frac d{du} \frac
d{dW} \rho _W (t,u)
\end{equation}
for all $t>0$.

\subsection{Annealed hydrodynamic limit}
\label{renetta}

Let $\mc X = \{0,1\}^{\bb Z}$ and denote by the Greek letter $\eta$
the configurations of $\mc X$ so that $\eta (x) =0$ if site $x$ is
vacant for the configuration $\eta$ and $\eta (x) =1$ otherwise.

For each fixed realization $\{\xi_x : x\in \bb Z\}$, consider the
exclusion process on $\mc X$ with random conductances $\{\xi_x : x\in
\bb Z\}$. This is the Markov process on $\{0,1\}^{\bb Z}$ with
generator $L$ given by \eqref{g4}.  Given $T>0$ and a probability
measure $\mu$ on $\mc X$, let $\PP^{\xi,N}_{\mu}$ be the law on the
path space $D([0,T], \mc X)$ of the exclusion process $\{ \eta_t :
t\ge 0\}$ with initial distribution $\mu$ and generator $L$
\emph{speeded up} by $N^{1 + 1/\alpha}$. Expectation with respect to
$\PP^{\xi,N}_{\mu}$ is denoted by $\bb E^{\xi,N}_{\mu}$.

Denote by $\mc M = \mc M (\bb R)$ the space of Radon measures on
$\bb R$ endowed with the vague topology, i.e., $\mf m_n \rightarrow
\mf m $  if and only if $ \mf m_n (f)\rightarrow \mf m (f)$ for any
$f\in C_c (\bb R)$.  Let $\p^{N}_{t} \in \mc M$ be the empirical
measure at time $t$. This is the measure on $\bb R$ obtained by
rescaling space by $N$ and by assigning mass $N^{-1}$ to each
particle at time $t$:
\begin{equation}
\label{mis_emp_t}
\pi^{N}_{t} \;=\; \frac{1}{N} \sum _{x\in \ZZ} \eta_t (x)\,
\d_{x/N}\;,
\end{equation}
where $\delta_u$ is the Dirac measure concentrated on $u$. For $H$ in
$C_c(\bb R)$, $\<\pi^N_t, H\>$ stands for the integral of $H$ with
respect to $\pi^N_t$:
\begin{equation*}
\<\pi^N_t, H\> \;=\; \frac 1N \sum_{x\in\bb Z}
H (x/N) \eta_t(x)\;.
\end{equation*}

A sequence of probability measures $\{\mu_N : N\geq 1 \}$ on $\mc X$
is said to be associated to a profile $\rho_0 :\RR\to [0,1]$ if
$\pi^N_0$ converges to $\rho_0 (u) du$, as $N\uparrow\infty$:
\begin{equation*}
\lim _{N\to \infty}
\mu_N \Big\{ \,  \Big | \<\pi^N_0 , H\>  - \int _\RR H(u) \rho_0 (u)
\, du \Big | \; > \; \delta \Big\} \;=\; 0
\end{equation*}
for all $H \in C_c(\RR)$ and for all $\d>0$.

The following theorem describes the hydrodynamic behavior of the
exclusion process with random conductances $\xi_x$:

\begin{theo}
\label{mt2} 
Let $\rho_0:\RR\to [0,1]$ be a uniformly continuous function and let
$\{\mu_N : N\geq 1 \}$ be a family of probability measures on $\mc X$
associated to $\rho_0$. Then, for all $T>0$, all $\d>0$ and all $H \in
C_c(\RR)$,
\begin{equation*}
\lim _{N \to \infty} \int \mf Q (d \xi) \, \bb P^{\xi,N}_{\mu_N}
\Big[ \sup_{0\le t\le T} \Big | \<\pi^N_t , H\> - \int _\RR H(u)
(P^{\xi,N}_t\rho_0) ([u]_N) \, du \Big | \; > \; \delta \Big] \;=\;
0\;,
\end{equation*}
where $P^{\xi, N}_t$ is the Markov semigroup associated to the random
walk $X^\xi _N (t|\cdot)/N$ and $[u]_N=\lceil u N \rceil /N$.
\end{theo}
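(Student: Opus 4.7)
The plan is to reduce the annealed statement to the quenched hydrodynamic limit of Theorem \ref{mt1} by means of the array coupling introduced earlier in the paper. For each fixed $N$ the sequence $\{\xi_{N,x}\}_{x\in\ZZ}$ built from the subordinator $W$ has, under $P$, the same joint distribution as the i.i.d.\! sequence $\{\xi_x\}_{x\in\ZZ}$ under $\mf Q$. Since both the generator \eqref{g4} of the exclusion process and the jump rates of the random walk $X^\xi_N$ depend on the environment only through this joint distribution, for every measurable set $A$ in path space
\begin{equation*}
\int \mf Q(d\xi)\, \bb P^{\xi,N}_{\mu_N}[A] \;=\; \int P(dW)\, \bb P^{\xi_N(W),N}_{\mu_N}[A]\,,
\end{equation*}
and the analogous identity holds for expectations involving $P^{\xi,N}_t$. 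It therefore suffices to prove the claim after replacing $(\xi,\mf Q)$ by $(\xi_N(W),P)$.

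Put $\rho_W(t,u)=P_t\rho_0(u)$, which by Theorem \ref{mt5} and \eqref{g09} is the solution of \eqref{g03} with initial datum $\rho_0$. Inserting $\rho_W$ via the triangle inequality splits the quantity inside the probability into $I_N+J_N$, where
\begin{equation*}
I_N \;=\; \sup_{0\le t\le T} \Big|\langle \pi^N_t, H\rangle - \int_\RR H(u)\,\rho_W(t,u)\, du \Big|\,,
\end{equation*}
\begin{equation*}
J_N \;=\; \sup_{0\le t\le T} \int_\RR |H(u)|\, \big|\rho_W(t,u)-(P^{\xi_N(W),N}_t \rho_0)([u]_N)\big|\, du\,.
\end{equation*}
The term $I_N$ is a random variable depending on the exclusion trajectory; by the quenched hydrodynamic limit of Theorem \ref{mt1}, for $P$-almost every $W$ it tends to $0$ in $\bb P^{\xi_N(W),N}_{\mu_N}$-probability.

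The term $J_N$ is deterministic once $W$ is fixed. Since $(P^{\xi_N(W),N}_t\rho_0)([u]_N)=E\big[\rho_0\big(X^{\xi_N(W)}_N(t|\lceil uN\rceil)/N\big)\big]$, the quenched invariance principle of Kawazu--Kesten encoded in the array construction provides, for $P$-almost every $W$ and every $(t,u)$, the weak convergence $X^{\xi_N(W)}_N(t|\lceil uN\rceil)/N \Rightarrow Y_W(t|u)$. Using that $\rho_0$ is bounded and uniformly continuous one gets the pointwise convergence $(P^{\xi_N(W),N}_t\rho_0)([u]_N)\to P_t\rho_0(u)=\rho_W(t,u)$; promoting this to uniform convergence in $(t,u)$ on compact sets (via continuity of $Y_W$ in its space--time variables and tightness of the family of rescaled walks as $N$ and the initial point vary over a compact), the compact support of $H$ and dominated convergence yield $J_N\to 0$ for $P$-almost every $W$.

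Combining the two steps, for $P$-almost every $W$ the quantity inside the $\bb P^{\xi_N(W),N}_{\mu_N}$-probability tends to $0$ in probability; since this probability is uniformly bounded by $1$, dominated convergence in the outer $P$-integral finishes the argument. The main technical obstacle is the uniform-on-compacts upgrade needed to handle $J_N$: one must promote the pointwise Kawazu--Kesten scaling limit into a joint statement uniform in $(t,u)$ on compact sets, for $P$-almost every $W$. The natural route is to exploit the a.s.\! invariance principle built into the array together with the continuity of $(t,u)\mapsto Y_W(t|u)$ inherited from the construction $Y_W(t|u)=W^{-1}(Z(t|W(u)))$ carried out in Section \ref{reginetta}.
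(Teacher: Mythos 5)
Your reduction of the annealed statement to the quenched one via the equality in law of $\{\xi_{N,x}\}$ and $\{\xi_x\}$, the splitting into a stochastic term $I_N$ (controlled by Theorem \ref{mt1}) plus a deterministic term $J_N$, and the final passage by dominated convergence in the outer integral, all match the paper's strategy. The gap is in $J_N$, which you correctly flag as the main obstacle but for which the route you propose does not work. You want to upgrade the pointwise convergence $(P^{N}_t\rho_0)([u]_N)\to\rho_W(t,u)$ to convergence uniform in $(t,u)$ on compacts. Uniformity in $u$ is in general false: $\rho_W(t,\cdot)=P_t\rho_0$ belongs to $C_{W,0}(\bb R)$ and, unless constant, has genuine jumps at jump points of $W$, which form a dense set; on the other hand $u\mapsto (P^{N}_t\rho_0)([u]_N)$ is a step function jumping only at lattice points $k/N$. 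On the piece of lattice interval between a jump point $x_j$ of $\rho_W(t,\cdot)$ and the nearest lattice point the two functions differ by essentially the size of the jump, so $\sup_u$ of the difference stays bounded away from zero. No amount of tightness of the rescaled walks will repair this, because the failure is caused by the discontinuity of the limit profile itself.

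The paper avoids any uniformity in $u$. Bounding $J_N\le \Vert H\Vert_\infty \int_K \sup_{0\le t\le T}\bb E\big[\,|\rho_0(Y(t|u))-\rho_0(Y_N(t|u))|\,\big]\,du$ with $K$ a compact set containing the support of $H$, one uses the uniform continuity of $\rho_0$ to bound the integrand, for any $\varepsilon>0$ and a suitable $\delta>0$, by $\varepsilon + 2\Vert\rho_0\Vert_\infty\,\bb P\big[\sup_{0\le t\le T}|Y(t|u)-Y_N(t|u)|>\delta\big]$; for each fixed $u$ this probability vanishes as $N\uparrow\infty$ by the second statement of Lemma \ref{ufo}, and dominated convergence in $u$ over $K$ finishes. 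The point is that the comparison of the two semigroups only ever appears integrated in $du$, so pointwise-in-$u$ (but uniform-in-$t$) control of the coupled walks suffices, and the required uniformity in $t$ is already supplied by Lemma \ref{ufo}. You should replace your uniform-convergence step by this argument; the rest of your proof then goes through as written.
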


\begin{corollary}
\label{mt6}
Given a uniformly continuous function $\rho_0:\RR\to [0,1]$, for each
$N\geq 1 $ on a common probability space $(\Theta_N, \mathcal{F} _N,
Q_N)$ one can define a double sided $\a$--stable subordinator $W$ and
an exclusion process $\eta_t$ with law $\int \mf Q (d \xi) \, \bb
P^{\xi,N}_{\mu_N}$ such that, for all $T>0$, all $\d>0$ and all $H \in
C_c(\RR)$,
\begin{equation*}
\lim _{N\to \infty} Q_N \Bigl ( \sup_{0\le t\le T} \Big |
\<\pi^N_t , H\> - \int _\RR H(u) \rho _{W}(t,u)\, du \Big| > \d
\Bigr) \;=\;0\;,
\end{equation*}
where $\rho_W(t,u)=P_t \rho _0(u)$.
\end{corollary}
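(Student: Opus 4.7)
The plan is to realize the annealed exclusion process as a marginal of an explicit coupling with the subordinator $W$, and then combine Theorem \ref{mt2} with the quenched convergence of the underlying random walk discussed in Section \ref{reginetta}.

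On the probability space $(\Omega,\Ff,P)$ carrying $W$, define the array
\begin{equation*}
\xi_{N,x}^{-1}(W) \;=\; G^{-1}\bigl(N^{1/\alpha}\{W(x+1/N)-W(x/N)\}\bigr),\qquad x\in\ZZ,
\end{equation*}
exactly as in the introduction, so that for each $N\ge 1$ the array $\{\xi_{N,x}(W):x\in\ZZ\}$ has law $\mf Q$. Take $(\Theta_N,\mc F_N,Q_N)$ to be the extension of $(\Omega,\Ff,P)$ that carries, conditionally on $W$, an exclusion process with initial distribution $\mu_N$ and generator $L$ built from the conductances $\xi_{N,x}(W)$, speeded up by $N^{1+1/\alpha}$. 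By Fubini, the $\eta$-marginal of $Q_N$ is $\int\mf Q(d\xi)\,\bb P^{\xi,N}_{\mu_N}$, so $Q_N$ satisfies the structural requirement in the statement.

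Introduce the shorthand
\begin{equation*}
A_N(t,W) \;:=\; \int_\RR H(u)\,\bigl(P^{\xi_N(W),N}_t\rho_0\bigr)([u]_N)\,du,\qquad A(t,W) \;:=\; \int_\RR H(u)\,\rho_W(t,u)\,du.
\end{equation*}
Theorem \ref{mt2}, applied pathwise in $W$ and then integrated against $P$, gives
\begin{equation*}
\sup_{0\le t\le T}\bigl|\<\pi^N_t,H\> - A_N(t,W)\bigr| \longrightarrow 0 \quad\text{in $Q_N$-probability}.
\end{equation*}
Since $A_N$ and $A$ depend only on $W$, the triangle inequality reduces the corollary to
\begin{equation*}
\sup_{0\le t\le T}\bigl|A_N(t,W)-A(t,W)\bigr| \longrightarrow 0 \qquad P\text{-a.s.}
\end{equation*}
Writing $(P^{\xi_N(W),N}_t\rho_0)(x/N)=E[\rho_0(X^{\xi_N(W)}_N(t|x)/N)]$ and $\rho_W(t,u)=E[\rho_0(Y_W(t|u))]$, the $u$-integrand in $A_N(t,W)-A(t,W)$ equals $E[\rho_0(X^{\xi_N(W)}_N(t|\lceil uN\rceil)/N)-\rho_0(Y_W(t|u))]$. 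The almost sure convergence $S_N(\lfloor Nu\rfloor)/N^{1/\alpha}\to W(u)$ built into the coupling yields a quenched version of the Kawazu--Kesten invariance principle (a variant of \cite{KK} recorded in Section \ref{reginetta}): for $P$-a.a.\ $W$ and every $u\in\RR$, the process $X^{\xi_N(W)}_N(\cdot|\lceil uN\rceil)/N$ converges in $D([0,T],\RR)$ to the continuous process $Y_W(\cdot|u)$. Convergence in Skorohod topology to a continuous limit is uniform on $[0,T]$, and since $\rho_0$ is bounded and uniformly continuous,
\begin{equation*}
\sup_{0\le t\le T}\bigl|\rho_0\bigl(X^{\xi_N(W)}_N(t|\lceil uN\rceil)/N\bigr)-\rho_0\bigl(Y_W(t|u)\bigr)\bigr|\longrightarrow 0
\end{equation*}
almost surely on the driving space of the random walk and Brownian motion. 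Bounded convergence moves the limit through the expectation, and dominated convergence (using the compact support of $H$) moves it through the $u$-integral, yielding $\sup_t|A_N(t,W)-A(t,W)|\to 0$ $P$-a.s., as required.

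The main obstacle is the quenched functional invariance principle for $X^{\xi_N(W)}_N(\cdot|\lceil uN\rceil)/N$ under the explicit coupling $\xi_N(W)$: \cite{KK} only supplies the averaged statement, and the upgrade to a pointwise-in-$W$ convergence in $D([0,T],\RR)$ — which is precisely what allows the step from fixed-$t$ convergence to the uniform-in-$t$ convergence needed for the supremum, via the continuity of the limit $Y_W$ — is the non-trivial input imported from Section \ref{reginetta}. Once this is granted, the remainder is a routine coupling-plus-triangle-inequality argument.
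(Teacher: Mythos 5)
Your proof is correct and follows the route the paper intends: couple the conductances to $W$ via the array $\xi_{N,x}(W)$, invoke Theorem \ref{mt2}, and then pass from $(P^{\xi_N(W),N}_t\rho_0)([u]_N)$ to $\rho_W(t,\cdot)$ by the a.s.\ uniform-in-$t$ convergence of the integrals against $H$. The ``non-trivial input'' you flag is not missing from the paper --- it is exactly Lemma \ref{ufo} (stated in Section \ref{sec4}, not Section \ref{reginetta}) together with the estimate \eqref{c5} already established inside the proof of Theorem \ref{mt2}, so nothing new needs to be imported.
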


The proof of Theorem \ref{mt2} is presented in Section \ref{sec3} and
Corollary \ref{mt6} is a straightforward consequence of Theorem
\ref{mt2}. 

The asymptotic behavior of a tagged particle can be recovered from the
hydrodynamic limit of the process. Fix an initial density profile
$\rho_0:\RR\to [0,1]$ in $C_c(\bb R)$ and let $\{\mu_N : N\geq 1 \}$
be a family of probability measures on $\mc X$ associated to $\rho_0$
and with bounded support in the sense that there exists $A>0$ such
that $\mu_N \{\eta(x) = 1\} =0$ for all $|x/N|>A$, $N\ge 1$.  Assume
that the origin is occupied at time $0$. Tag the particle at the
origin and let the process evolve according to the generator
\eqref{g4} speeded up by $N^{1+1/\alpha}$. Denote by $x_t^N$ the
position of the tagged particle at time $t$. Since particles cannot
jump over other particles and since, according to the previous
theorem, the density profile at time $t$ is approximated by
$P^{\xi,N}_t \rho_0$, $x_t^N/N$ must be close to $u^{\xi,N}_t$, the
unique solution of
\begin{equation*}
\int_{-\infty}^{u^{\xi,N}_t} (P^{\xi, N}_t \rho_0)([u]_N) \, du\;=\;
\int_{-\infty}^{0} \rho_0(u) \, du\;.
\end{equation*}
Note that $u^{\xi,N}_t$ is uniquely determined by this equation
because $P^{\xi, N}_t \rho_0$ is strictly positive and, due to Lemma
\ref{s06}, is Lebesgue integrable. In Section \ref{sec3}, we prove

\begin{theorem}
\label{mt4}
Fix an initial density profile $\rho_0:\RR\to [0,1]$ in $C_c(\bb R)$
and let $\{\mu_N : N\geq 1 \}$ be a family of probability measures on
$\mc X$ associated to $\rho_0$, with bounded support and conditioned to
have a particle at the origin. Then, for all $t>0$ and all $\d>0$,
\begin{equation*}
\lim _{N \to \infty} \int \mf Q (d \xi) \, \bb P^{\xi,N}_{\mu_N}
\Big[ \big | x^N_t/N - u^{\xi,N}_t \big | \; > \; \delta \Big] \;=\;
0\;.
\end{equation*}
\end{theorem}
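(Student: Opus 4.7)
The key identity comes from particle order-preservation in the one-dimensional symmetric exclusion process: particles never cross. If the tagged particle starts at the origin and is at site $x^N_t$ at time $t$, then the number of particles at sites $\le x^N_t$ is preserved, which in terms of the empirical measure reads
\[
\pi^N_t\bigl((-\infty, x^N_t/N]\bigr) \;=\; \pi^N_0\bigl((-\infty, 0]\bigr).
\]
My plan is to show that both sides converge in annealed probability to the common value $\int_{-\infty}^0 \rho_0(u)\, du$, identified on the right as $G^{\xi,N}_t(u^{\xi,N}_t)$, where
\[
G^{\xi,N}_t(a) \;:=\; \int_{-\infty}^a (P^{\xi,N}_t \rho_0)([u]_N)\, du,
\]
and then to invert $G^{\xi,N}_t$ using its strict monotonicity.

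For the time-zero side, the association of $\mu_N$ with $\rho_0$ combined with the compact support of $\rho_0$ yields $\pi^N_0((-\infty,0]) \to \int_{-\infty}^0 \rho_0(u)\, du$ in probability, by sandwiching $\mb 1_{(-\infty,0]}$ between two $C_c(\bb R)$ functions that agree with it off an $\epsilon$-neighborhood of the origin and sending $\epsilon\downarrow 0$.

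For the time-$t$ side, I would first prove pointwise convergence $\pi^N_t((-\infty, a]) - G^{\xi,N}_t(a) \to 0$ in annealed probability for each fixed $a\in\bb R$. This is a direct application of Theorem \ref{mt2} after squeezing $\mb 1_{[-M, a]}$ between $C_c(\bb R)$ test functions and then sending $M\to\infty$. The tail of $G^{\xi,N}_t$ at $-\infty$ is controlled by the integrability properties of $P^{\xi,N}_t \rho_0$ provided by Lemma \ref{s06}, while the tail $\pi^N_t((-\infty, -M])$ is controlled by combining conservation of the total number of particles (bounded by $2AN+O(1)$ initially, and hence at all times) with a quantitative tightness estimate for the hydrodynamic evolution. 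I would then upgrade to the uniform-in-$a$ statement
\[
\sup_{a\in\bb R}\,\bigl|\pi^N_t((-\infty,a])-G^{\xi,N}_t(a)\bigr| \;\longrightarrow\; 0
\]
by a standard Pólya-type argument: both $a\mapsto\pi^N_t((-\infty, a])$ and $a\mapsto G^{\xi,N}_t(a)$ are non-decreasing and bounded, and $G^{\xi,N}_t$ possesses an equicontinuity modulus uniform in $N$, inherited from the continuity of the scaling-limit profile $\rho_W(t,\cdot)$ provided by Corollary \ref{mt6}.

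Combining the two parts with the order-preservation identity yields $G^{\xi,N}_t(x^N_t/N) \to G^{\xi,N}_t(u^{\xi,N}_t)$ in probability. Since the random walk $X^\xi_N$ is irreducible, $(P^{\xi,N}_t\rho_0)([\cdot]_N)$ is strictly positive for $t>0$, so $G^{\xi,N}_t$ is continuous and strictly increasing; inverting on a compact neighborhood of $u^{\xi,N}_t$ gives $|x^N_t/N - u^{\xi,N}_t| \to 0$ in probability. The main technical obstacle is the tail/tightness estimate in the second step: since Theorem \ref{mt2} furnishes convergence only against $C_c(\bb R)$ test functions, a separate argument is required to preclude escape of particle mass to infinity under the super-diffusive scale $N^{1+1/\alpha}$, and this is where the bounded support of $\rho_0$ and uniform-in-$N$ decay estimates for $P^{\xi,N}_t \rho_0$ enter crucially.
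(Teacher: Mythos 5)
Your proposal never uses the paper's key reduction, and this is where it runs into trouble. The annealed theorem is proved in the paper in a few lines by observing that $\{\gamma_x(W,N):x\in\bb Z\}$ has the same law as $\{\xi_x^{-1}:x\in\bb Z\}$, so that the random variable $\bb P^{\xi,N}_{\mu_N}[|x^N_t/N-u^{\xi,N}_t|>\delta]$ has the same law as $\bb P^{W,N}_{\mu_N}[|x^N_t/N-u^{W,N}_t|>\delta]$; the latter tends to zero for a.a.\ $W$ by the quenched Theorem \ref{mt3} together with the deterministic convergence $u^{W,N}_t\to u_W(t)$ (which follows from the $L^1$ convergence $P^N_t\rho_0\to P_t\rho_0$ of Lemma \ref{ff} (i) and the strict positivity of $P_t\rho_0$), and one concludes by dominated convergence. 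You instead attempt a direct annealed argument via empirical distribution functions, which amounts to re-proving the tagged-particle law of large numbers from scratch in the $\xi$-setting.

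The genuine gap is in your final inversion step. From $G^{\xi,N}_t(x^N_t/N)-G^{\xi,N}_t(u^{\xi,N}_t)\to 0$ you cannot conclude $|x^N_t/N-u^{\xi,N}_t|\to 0$ merely because each $G^{\xi,N}_t$ is strictly increasing: you need a modulus of strict monotonicity near $u^{\xi,N}_t$ that is \emph{uniform in $N$} (with high $\mf Q$-probability), i.e.\ a lower bound on $\int_{u^{\xi,N}_t}^{u^{\xi,N}_t\pm\delta}(P^{\xi,N}_t\rho_0)([u]_N)\,du$ bounded away from $0$ as $N\to\infty$. Strict positivity of $(P^{\xi,N}_t\rho_0)$ for each fixed $N$ does not preclude this quantity from degenerating. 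The same problem afflicts the ``uniform-in-$N$ decay estimates for $P^{\xi,N}_t\rho_0$'' that you correctly identify as crucial for the tail control: for $\mf Q$-a.a.\ fixed $\xi$ the rescaled sums $S(\lfloor Nu\rfloor)/N^{1/\alpha}$ do \emph{not} converge (only their laws do), so $P^{\xi,N}_t\rho_0$ has no quenched limit and no quenched uniform estimates of this kind are available --- this is precisely why the paper introduces the array $\{\xi_{N,x}\}$ parametrized by $W$. Both gaps can only be filled by transferring to the $W$-array via the equality in law, at which point you are back to the paper's argument and might as well invoke Theorem \ref{mt3} directly. (Two smaller inaccuracies: $\rho_W(t,\cdot)$ is only c\`adl\`ag, not continuous, though its primitive is Lipschitz since $0\le P^{\xi,N}_t\rho_0\le 1$, which is all the P\'olya argument needs; and the equicontinuity of $G^{\xi,N}_t$ is this trivial Lipschitz bound rather than anything ``inherited'' from Corollary \ref{mt6}.)
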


\subsection{Quenched hydrodynamic limit} 
\label{moravia}

For $N\ge 1$, $x$ in $\ZZ$, set
\begin{equation}
\label{moraviabis}
c_x \;=\; c_x (W,N) \;=\;\frac 1{\g_x}\;, \qquad  \g_x\;=\;
\g_x(W,N)\;=\; N ^{1 / \a} \Big\{ W\Big(\frac{x+1}{N}\Big)
-W\Big(\frac{x}{N}\Big)\Big\}\;.
\end{equation}
Note that  $c_x$ equals the constant $ \xi_{x,N}$ defined in the
introduction. Trivially,  $\{ \g_x \,:\, x\in \ZZ \}$ is a family of
i.i.d. $\a$--stable random variables such that
\begin{equation*}
E\Big[  \exp\{ -\lambda \g_x\}\Big] \;=\;
\exp\{-c_0\lambda^\a\}
\end{equation*}
for $\lambda>0$. In particular, for each $N\ge 1$, $\{ \g_x (W,N)
\,:\, x\in \ZZ \}$ has the same distribution as $\{ \xi^{-1}_x \,:\,
x\in \ZZ \}$.

Consider the exclusion process on $\ZZ$ in which we exchange the
occupation variables $\eta(x)$, $\eta(x+1)$ at rate $c_x$. This
is the Markov process on $\mc X$ whose generator $\Ll_N$ acts on local
functions $f:\mc X\to \bb R$ as
\begin{equation}
\label{generare}
(\Ll_N f) (\eta) \;=\; \sum _{x\in \ZZ} c_x
\big\{ f(\sigma^{x,x+1} \eta) - f(\eta)\big\}\; ,
\end{equation}
where $\sigma^{x,x+1} \eta$ is the configuration defined by
\eqref{g5}.

Given $T>0$ and a probability measure $\mu$ on $\mc X$, let
$\PP^{W,N}_{\mu}$ be the law on the path space $D([0,T], \mc X)$ of
the exclusion process $\{ \eta_t : t\ge 0\}$ with initial
distribution $\mu$ and generator $\Ll_N$ \emph{speeded up} by $N^{1
+ 1/\alpha}$.

\begin{theo}
\label{mt1} 
Let $\rho_0:\RR\to [0,1]$ be a continuous function in $C_0(\bb R)$ and
let $\{\mu_N : N\geq 1 \}$ be a family of probability measures on $\mc
X$ associated to $\rho_0$. Then, for almost all $W$ and all $T>0$ the
empirical measure $\p ^{N}_{t}$ converges in probability to the
measure $\rho_W(t,u) du$, where $\rho_W(t,u) = (P_t \rho_0)(u)$:
\begin{equation*}
\lim _{N \to \infty} \bb P^{W,N}_{\mu_N} \Big[ \sup_{0\le t\le T}
\Big | \<\pi^N_t , H\> - \int _\RR H(u) \rho_W (t,u) \, du \Big | \;
> \; \delta \Big] \;=\; 0
\end{equation*}
for all $H \in C_c(\RR)$ and for all $\d>0$.
\end{theo}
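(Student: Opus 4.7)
I would follow the classical three--step strategy for hydrodynamic limits of symmetric exclusion processes, adapted to the singular generator $\mf L_W$. Fix a realization of $W$ in the full $P$--measure set on which Theorem~\ref{mt5} and the scaling properties of the increments $\g_x(W,N) = N^{1/\a}[W((x+1)/N) - W(x/N)]$ hold. Step one is tightness of the laws of $\{\pi^N_\cdot\}$ in $D([0,T],\mc M)$; step two identifies any subsequential limit as a weak solution of $\partial_t\rho = \mf L_W\rho$ with initial datum $\rho_0$; step three invokes uniqueness, which is provided by the semigroup $\{P_t\}$ of Theorem~\ref{mt5}. Since the limit is deterministic, weak convergence upgrades to convergence in probability, and the uniform--in--$t$ version of the statement follows from a maximal inequality applied to the Dynkin martingale introduced below.

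The central computation is the explicit form of Dynkin's equation for $\<\pi^N_t,H\>$. For $H\in \mf D_W$ with compact effective support, a direct application of \eqref{generare} together with $c_x = N^{-1/\a}/[W((x+1)/N) - W(x/N)]$ and an Abel summation yields
$$N^{1+1/\a}\,\Ll_N\<\pi^N,H\>(\eta) \;=\; \frac1N\sum_{x\in\ZZ}\eta(x)\,(\Ll^N_W H)\!\left(\tfrac{x}{N}\right),$$
where
$$(\Ll^N_W H)\!\left(\tfrac{x}{N}\right) \;=\; N\!\left\{\frac{H(\tfrac{x+1}{N}) - H(\tfrac{x}{N})}{W(\tfrac{x+1}{N}) - W(\tfrac{x}{N})} - \frac{H(\tfrac{x}{N}) - H(\tfrac{x-1}{N})}{W(\tfrac{x}{N}) - W(\tfrac{x-1}{N})}\right\}.$$
The inner ratio is the natural discrete approximation of $dH/dW$, so by the characterization of $\mf D_W$ via \eqref{anna1}--\eqref{anna2} one has the deterministic pointwise convergence $(\Ll^N_W H)(x/N) \to (\mf L_W H)(u)$ whenever $x/N\to u$. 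Tightness of $\{\pi^N_\cdot\}$ then follows from Aldous' criterion, using a straightforward $L^2$ bound on the quadratic variation of the Dynkin martingale. Identifying the limit amounts to showing that, for every $H$ in a suitable core of $\mf L_W$ -- for instance the image of $C_c(\RR)$ under the resolvent $R_\lambda$ of $\{P_t\}$, which takes values in $\mf D_W$ -- the process
$$\<\pi^N_t,H\> - \<\pi^N_0,H\> - \int_0^t \<\pi^N_s,\mf L_W H\>\,ds$$
converges to zero in probability, uniformly on $[0,T]$. Combined with uniqueness in the class of bounded measurable solutions of \eqref{g09}, guaranteed by Theorem~\ref{mt5}, this forces the unique limit point to be the Dirac mass at $\rho_W(t,u)du$.

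The main obstacle is precisely the closing of the equation: replacing $\frac1N\sum_x\eta_s(x)(\Ll^N_W H)(x/N)$ by $\int\rho_W(s,u)(\mf L_W H)(u)\,du$. Pointwise convergence of $\Ll^N_W H$ to $\mf L_W H$ alone is not sufficient, because the conductances $c_x = 1/\g_x$ are unbounded (their distribution has a heavy tail at $\g_x = 0$) and $\Ll^N_W H$ can blow up near lattice points $x/N$ lying next to macroscopic jumps of $W$. The remedy, in the spirit of the one-- and two--block estimates of \cite{F} and \cite{jl2} for the diffusive case, is to truncate: discard the sites on which $c_x$ exceeds a large threshold, exploit the deterministic almost--sure control on the empirical distribution of the $\g_x$ coming from the subordinator structure \eqref{g05}, and bound the discarded contribution via the Dirichlet form of $\Ll_N$ and the hard--core bound $\eta(x)\le 1$. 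This quasi--diffusive adaptation of the replacement lemma is the technically most delicate ingredient of the proof.
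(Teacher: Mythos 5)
Your overall architecture (tightness, identification of limit points through Dynkin martingales, uniqueness of the limiting evolution) is a legitimate template, and the tightness step could be made to work, but the identification step rests on a claim that is false: for a fixed $H\in\mf D_W$ the discrete operator $(\Ll^N_W H)(x/N)$ does \emph{not} converge pointwise to $(\mf L_W H)(u)$. Indeed, writing $h=dH/dW$, the identity \eqref{anna2} shows that the inner ratio $[H(\tfrac{x+1}{N})-H(\tfrac xN)]/[W(\tfrac{x+1}{N})-W(\tfrac xN)]$ is the $dW$--average of $h$ over $(x/N,(x+1)/N]$; if $\mf L_W H\equiv g_0$ near $x/N$, a direct computation gives $(\Ll^N_W H)(x/N)=g_0\,N(b_x-b_{x-1})$, where $b_x$ denotes the $dW$--barycenter of $(x/N,(x+1)/N]$. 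Since $\a<1$, each increment of $W$ over a lattice interval is dominated by its largest jump, whose location is spread over the interval, so $N(b_x-b_{x-1})$ is an order--one quantity ranging over $(0,2]$ that does not converge to $1$. Note also that $|(\Ll^N_W H)(x/N)|\le 2\|\mf L_W H\|_\infty$ for $H\in\mf D_W$, so the difficulty is not a blow-up of the conductances, contrary to what you suggest; it is a genuine failure of generator convergence caused by the atoms of $dW$. The errors $N(b_x-b_{x-1})-1$ cancel only after summation by parts against a smooth function, whereas you must pair them against the rough field $\eta_s(x)\in\{0,1\}$; a one-- or two--block estimate does not address this, since those estimates replace local averages of nonlinear observables by functions of the density, while here the dynamics is linear in $\eta$ and the obstruction is purely analytic. (A secondary issue: Theorem \ref{mt5} gives uniqueness for the abstract Cauchy problem in $C_{W,0}(\bb R)$, which is not by itself uniqueness for the measure-valued weak formulation you would obtain.)

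The paper avoids generator convergence altogether. Tightness is proved by testing $\pi^N_t$ against $R^N_\lambda g$, the resolvent of the \emph{discrete} random-walk semigroup $P^N_t$, for which $\bb L_N R^N_\lambda g=\lambda R^N_\lambda g-g$ holds exactly (Lemma \ref{s01} and Corollary \ref{s02}). The limit is then identified by self-duality of the symmetric exclusion process: $\eta_t(x)=\sum_y p^N_t(x,y)\eta_0(y)+\text{martingale}$, and Lemma \ref{gaetano} shows, using orthogonality of the bond-current martingales and the Dirichlet form, that $\<\pi^N_t,H\>-\tfrac1N\sum_x(P^N_tH)(x/N)\eta_0(x)$ vanishes in $L^2$. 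What remains is convergence of the discrete semigroups $P^N_t$ to $P_t$, which follows from Stone's invariance principle for time-changed Brownian motions (Lemma \ref{ufo} and Lemma \ref{ff}); no core of $\mf L_W$ and no PDE uniqueness theorem is invoked. To repair your approach you would have to replace your test functions by the $N$--dependent resolvents $R^N_\lambda g$, at which point you are essentially reconstructing the paper's proof.
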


As observed above in the annealed case, the asymptotic behavior of a
tagged particle can be recovered from the hydrodynamic limit. 

\begin{theorem}
\label{mt3}
Let $\rho_0:\RR\to [0,1]$ be a continuous function with compact
support. Let $\{\mu_N : N\geq 0 \}$ be a family of probability
measures on $\mc X$ associated to $\rho_0$, with bounded support and
conditioned to have a particle at the origin. Then, for almost all $W$
and for all $t>0$, $x^N_t/N$ converges in $\bb
P_{\mu_N}^{W,N}$-probability, as $N\uparrow\infty$, to $u_W(t)$ given
by
\begin{equation*}
\int_{-\infty}^{u_W(t)} \rho_W(t,u) \, du
\;=\; \int_{-\infty}^{0} \rho_0(u) \, du\;.
\end{equation*}
\end{theorem}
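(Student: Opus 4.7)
The strategy for Theorem \ref{mt3} follows the classical route from a hydrodynamic limit to a tagged--particle law of large numbers in one dimension, and hinges on order preservation of nearest--neighbor exclusion together with Theorem \ref{mt1}. First, I would exploit the fact that particles cannot jump over each other, so the tagged particle remains the $K^N$--th particle from the left at every time $t$, where $K^N = \sum_{y \le 0} \eta_0(y)$. Hence the identity
\begin{equation*}
\sum_{y < x^N_t} \eta_t(y) \;=\; K^N - 1
\end{equation*}
holds $\bb P^{W,N}_{\mu_N}$--a.s.\ for all $t \ge 0$. Since $\mu_N$ is associated to $\rho_0$ and $\rho_0$ has compact support, $K^N/N \to m_0 := \int_{-\infty}^0 \rho_0(u)\,du$ in probability. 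Next, introduce $F(t,u) := \int_{-\infty}^u \rho_W(t,v)\,dv$; since $\rho_W(t,\cdot)$ is strictly positive, c\`adl\`ag and Lebesgue integrable (integrability following from mass conservation of the semigroup $P_t$, which is in turn a consequence of the divergence form of $\mf L_W = \tfrac{d}{du}\tfrac{d}{dW}$), the function $F(t,\cdot)$ is continuous and strictly increasing, and $u_W(t)$ is uniquely defined by $F(t, u_W(t)) = m_0$.

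The main technical step is to upgrade the quenched hydrodynamic limit of Theorem \ref{mt1} from test functions in $C_c(\bb R)$ to indicator functions of half--lines, i.e.\ to show that for every $b \in \bb R$ and every $t > 0$,
\begin{equation*}
\frac 1N \sum_{y \le Nb} \eta_t(y) \;\longrightarrow\; F(t,b)\qquad\text{in }\bb P^{W,N}_{\mu_N}\text{--probability.}
\end{equation*}
The first ingredient is a tail estimate: by conservation of particle number, $N^{-1}\sum_y \eta_t(y) = N^{-1}\sum_y \eta_0(y) \to \int \rho_0(u)\,du$; applying Theorem \ref{mt1} to a cutoff $H_M \in C_c(\bb R)$ with $H_M \equiv 1$ on $[-M, M]$ gives $N^{-1}\sum_{|y/N| \le M} \eta_t(y) \to \int_{[-M,M]} \rho_W(t,u)\,du$, and letting $M \to \infty$ together with the integrability of $\rho_W$ shows that $\pi^N_t$ places negligible mass outside large compact sets, uniformly in $N$, with probability approaching $1$. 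The second ingredient is to sandwich $\mathbf{1}_{[-M, b]}$ between continuous compactly supported functions that agree with it outside a $\delta$--neighborhood of $b$, apply Theorem \ref{mt1} and the continuity of $F(t,\cdot)$, and then send $\delta \to 0$.

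With this extension in hand, the theorem follows by a monotonicity argument. Fix $\epsilon > 0$. On the event $\{x^N_t/N \ge u_W(t) + \epsilon\}$, the monotonicity of the partial sums $b \mapsto \sum_{y \le Nb}\eta_t(y)$ combined with $\sum_{y < x^N_t}\eta_t(y) = K^N - 1$ yields
\begin{equation*}
\frac{K^N - 1}{N} \;\ge\; \frac 1N \sum_{y \le N(u_W(t) + \epsilon)} \eta_t(y) \;\longrightarrow\; F(t, u_W(t) + \epsilon) \;>\; F(t,u_W(t))\;=\;m_0,
\end{equation*}
contradicting $K^N/N \to m_0$; the event $\{x^N_t/N \le u_W(t) - \epsilon\}$ is handled symmetrically. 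I expect the extension to half--line indicator functions to be the main obstacle: Theorem \ref{mt1} is stated only for $H \in C_c(\bb R)$, and no a priori bound on how far particles migrate is immediate in the subdiffusive regime, so one must exploit exact conservation of particle number in tandem with integrability of the limit density $\rho_W$ to rule out escape of mass to infinity.
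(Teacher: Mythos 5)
Your proposal is correct and follows essentially the same route as the paper: order preservation of the exclusion dynamics reduces the problem to counting particles on a half-line, the quenched hydrodynamic limit of Theorem \ref{mt1} is upgraded to half-line indicator test functions by monotone approximation with compactly supported functions combined with conservation of the total particle number (which rules out escape of mass, since $\int\rho_W(t,u)\,du=\int\rho_0(u)\,du$), and the conclusion follows from strict positivity of $\rho_W(t,\cdot)$, exactly as in the paper. The only (harmless) quibble is the off-by-one in your inequality on the event $\{x^N_t/N\ge u_W(t)+\epsilon\}$, where the sum over $y\le N(u_W(t)+\epsilon)$ is bounded by $K^N$ rather than $K^N-1$; this does not affect the limit.
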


By Proposition \ref{pierpaolo} and Corollary \ref{s05} (iv) with
$H=\rho_0$ and $\rho=1$, $\rho_W(t, \cdot)$ is strictly positive and
integrable so that $u_W(t)$ is uniquely determined. We prove Theorem
\ref{mt3} in Subsection \ref{5.4} and that $u_W(t)$ is continuous.
Moreover, we derive in Lemma \ref{sc1}, under some extra assumptions
on $\rho_W(t, \cdot)$, the following differential equation for $u_W$:
\begin{equation*}
\frac d{dt+} u_W(t) \;=\;
\left\{
  \begin{array}{ll}
{\displaystyle - \frac 1{\rho_t(u_W(t))} \frac {d \rho_t}{dW} (u_W(t))} &
{\displaystyle \text{if } \frac {d \rho_t}{dW} (u_W(t)) < 0} \\
{\displaystyle - \frac 1{\rho_t(u_W(t)-)} \frac {d \rho_t}{dW} (u_W(t))} &
{\displaystyle \text{if } \frac {d \rho_t}{dW} (u_W(t)) > 0} \\
\;\;0 & {\displaystyle \text{otherwise}}\;.
  \end{array}
\right.
\end{equation*}
In this formula $\rho_t(\cdot) = \rho_W(t, \cdot)$, the differential
$d/dW$ is defined by \eqref{malditesta}, and $(df /dt+) (t_0)=
\lim_{\epsilon \downarrow 0} \epsilon^{-1} [f(t_0+\epsilon) -
f(t_0)].$

\section{Quasi-diffusions}
\label{sec2}

In this section we study  the Markov  processes $Y$ and $Z$, defined
in Section \ref{reginetta} in terms of the Brownian motion $B$ and
the subordinator $W$. All the results presented in this section hold
for almost all realizations of the subordinator $W$, although not
always explicitly stated.

Fix a realization of the subordinator $W$. Recall that $\nu=
dW^{-1}$ and that the support of $\nu$ coincides with
$\overline{W(\RR)} = \{ W(x), W(x-)\,:\, x\in \RR\}$.

\subsection{The Markov process  $Z(t|u)$}
\label{giobbe}

We briefly recall some results from \cite{lo2}, \cite{lo1} applied to
the Markov process $Z(t|u)= u+ B\bigl ( \psi^{-1}(t|u)\bigr)$ with
state space $\overline{W(\RR)}$ (see in particular Theorems 1.2.1 and
3.3.1 of \cite{lo2} and Theorem 3.2 of \cite{lo1}). Denote by $\{
Q_t:t\geq 0\}$ the Markov semigroup associated to $Z(t|u)$ acting on
the space $C_0(\overline{W(\RR)})$:
\begin{equation}
\label{petrova}
Q_t f(u) \;=\; \EE\left[ f( Z(t|u))\right]
\end{equation}
for all $f$ in $C_0(\overline{W(\RR)})$.   By endowing
$C_0(\overline{W(\RR)})$ with the uniform norm $\|\cdot\|_\infty$,
$\{ Q_t:t\geq 0\}$ is a strongly continuous semigroup of
contraction operators: for all $f\in C_0 (\overline{W(\RR)})$, $ Q_t
f$ belongs to $C_0 (\overline{W(\RR)})$, $\|Q_t f\|_\infty \leq
\|f\|_\infty$ and
\begin{equation*}
\lim _{s\to 0} \| Q_{t+s} f - Q_t f\|_\infty \; =\; 0\; ,
\end{equation*}
for all $t>0$. The same statement holds for $t=0$ if $s$ takes only
positive values.

Let $D_W$ be the set of functions $f$ in $C_0(\overline{W(\RR)})$ for
which there exists a function $h$ in $C_0(\overline{W(\RR)})$ and $a$,
$b$ in $\bb R$ such that
\begin{equation}\label{mucca}
f(u) \;=\; a \; +\; bu \;+\; \int_0^u dv \int_0 ^v  h(w) \, \nu (dw)
\end{equation}
for all $u$ in $\overline{W(\RR)}$. One can check that $h$ is
univocally determined. We denote $h$ by
\begin{equation*}
h \;=\; \frac{d}{dW^{-1}} \frac d{du} f \;=\; L_W f\;.
\end{equation*}
Then, $L_W: D_W \to C_0(\overline{W(\RR)})$ is the generator of the
Markov semigroup $\{ Q_t:t\geq 0\}$ on $C_0 (\overline{W(\RR)})$.
\medskip

We prove in \cite{fjl2} that the process $Z$ admits a strictly
positive symmetric transition density function $q_t(x,y)$ w.r.t.
$\nu$:

\begin{theorem}
\label{salomone} 
There exists a strictly positive Borel function $q$,
\begin{equation*} 
q: (0,\infty)\times \overline{W(\RR)}\times \overline{W(\RR)}
\rightarrow (0,\infty) \;,
\end{equation*}
symmetric in $x$, $y$, such that
\begin{equation*}
\bb E[ f (Z(t|x))] \;=\; \int f(y) q_t(x,y) \nu (dy)
\end{equation*}
for all $t> 0$, $x$ in $\overline{W(\RR)}$ and $f$ in
$C_{b}(\overline{W(\RR)})$.  Moreover,
\begin{equation*}
\int q_t(x,y) \nu (dy)\;=\; 1
\end{equation*}
for every $t>0$, $x$ in $\overline{W(\RR)}$ and $q_t(\cdot, y) \in D_W$,
\begin{equation*}
\partial_t q_t(\cdot, y) \;=\; L_W q_t(\cdot , y)
\end{equation*}
for every $t>0$ and $y$ in $\overline{W(\RR)}$.
\end{theorem}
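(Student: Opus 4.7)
My plan is to exploit the representation $Z(t|u)=u+B(\psi^{-1}(t|u))$, viewing $Z$ as a Brownian motion time-changed by the continuous additive functional $\psi(\cdot|u)$ whose Revuz measure with respect to Lebesgue measure is $\nu$. By the time-change theorem for symmetric Markov processes, as applied to gap diffusions in \cite{lo1},\cite{lo2}, this forces $Z$ to be symmetric with respect to $\nu$, and $\{Q_t\}$ extends to a self-adjoint strongly continuous contraction semigroup on $L^2(\overline{W(\RR)},\nu)$ whose $L^2$-generator coincides with the closure of $L_W$. The density itself I would construct from the occupation-time identity
\[
\int_0^t Q_s f(u)\,ds \;=\; \EE\Bigl[\int f(v)\,L(\psi^{-1}(t|u),\,v-u)\,dv\Bigr],
\]
valid for nonnegative $f\in C_c(\RR)$, by rewriting the right-hand side as an integral against $\nu(dv)$ via the Revuz identity linking Brownian local time to $\nu$ through $\psi$, and then differentiating in $t$. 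Symmetry $q_t(u,v)=q_t(v,u)$ follows from self-adjointness of $Q_t$ together with joint continuity of $q_t$, which is a consequence of the H\"older regularity of one-dimensional gap-diffusion heat kernels.

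For $\int q_t(x,y)\,\nu(dy)=1$, I would verify conservativeness of $Z$, i.e.\! $\psi(t|u)\to\infty$ a.s.\! as $t\to\infty$; this holds because $W$ has unbounded range on both half-lines a.s. and Brownian local time at every level is unbounded in time. For strict positivity I would invoke irreducibility: for every open $U\subset\overline{W(\RR)}$ and every starting point $x$, the Brownian motion $x+B$ visits $U$ in finite physical time with positive probability, and $\psi^{-1}$ sends these finite times to finite times because $\psi(\cdot|u)$ has no flat stretches on $\supp(\nu)$. Chapman--Kolmogorov combined with joint continuity of $q_t$ then promotes this to pointwise strict positivity.

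For the PDE $\partial_t q_t(\cdot,y)=L_W q_t(\cdot,y)$ and the membership $q_t(\cdot,y)\in D_W$, I would differentiate the semigroup identity $q_{t+s}(\cdot,y)=Q_s q_t(\cdot,y)$ in $s$ at $s=0$. The main obstacle is upgrading this derivation from the $L^2(\nu)$-sense, where it is automatic from $L_W$ being the $L^2$-generator, to the pointwise, strong-domain statement asserted in the theorem; concretely, one must show that $q_t(\cdot,y)$ lies in the $C_0$-domain $D_W$ and not merely in the $L^2$-domain of the closure of $L_W$. I would handle this by combining the smoothing estimate $Q_t:L^2(\nu)\to C_0(\overline{W(\RR)})$ (a consequence of an $L^\infty$ bound on $q_t$) with the integral representation \eqref{mucca} applied to $Q_s g$ for $g=q_{t/2}(\cdot,y)$, thereby producing the bounded continuous object $\frac{d}{dW^{-1}}\frac{d}{du}q_t(\cdot,y)$ required by the definition of $D_W$. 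The singular structure of $\nu$ makes obtaining the requisite heat-kernel regularity the key technical step, but the estimates from \cite{lo2} transfer to the present setting.
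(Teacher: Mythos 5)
You should know at the outset that the paper does not prove Theorem \ref{salomone} at all: the sentence immediately preceding it reads ``We prove in \cite{fjl2} that the process $Z$ admits a strictly positive symmetric transition density function,'' and \cite{fjl2} is a separate companion paper listed as ``in preparation.'' So there is no in-paper argument to compare yours against; the theorem is imported as a black box. Judged on its own terms, your outline follows the route one would expect for a quasi-diffusion (time change of Brownian motion, Revuz correspondence, symmetry from self-adjointness in $L^2(\nu)$, irreducibility plus Chapman--Kolmogorov for strict positivity, conservativeness from $\psi(s|u)\to\infty$), but it contains one concrete error and otherwise defers every genuinely hard step to unproved regularity claims.

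The error: your occupation-time identity is written with Lebesgue measure $dv$ on the right-hand side. With $dv$, the Brownian occupation-time formula turns that side into $\EE\bigl[\int_0^{\psi^{-1}(t|u)} f(u+B(r))\,dr\bigr]$, which is the occupation measure of the Brownian clock, not of $Z$, and is not equal to $\int_0^t Q_sf(u)\,ds$. The correct statement, obtained from the substitution $s=\psi(r|u)$ together with the fact that the measure $L(dr,v-u)$ is carried by $\{r: B(r)=v-u\}$, is $\int_0^t f(Z(s|u))\,ds=\int f(v)\,L(\psi^{-1}(t|u),v-u)\,\nu(dv)$; it already carries $\nu(dv)$, and the subsequent ``rewriting via the Revuz identity'' you invoke is neither needed nor available in the form you describe. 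Beyond that, the load-bearing steps --- differentiability in $t$ of $\EE[L(\psi^{-1}(t|u),v-u)]$ with a jointly measurable, jointly continuous, bounded version of the derivative for the singular measure $\nu=dW^{-1}$; the smoothing bound $Q_t:L^2(\nu)\to C_0(\overline{W(\RR)})$; and the upgrade of $q_t(\cdot,y)$ from the $L^2$-domain of the closed generator to the $C_0$-domain $D_W$ --- are precisely the substance of the theorem, and your proposal asserts them (``the estimates from \cite{lo2} transfer to the present setting'') rather than establishing them. As it stands this is a plausible plan, presumably close in spirit to what \cite{fjl2} carries out, but it is not yet a proof.
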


\subsection{Markovian properties of the process $Y(t|u)$.}

Denote by $\{x_j : j\ge 1\}$ the jump points of $W$, which form a
countable dense set in $\bb R$.  Since $W^{-1} (W(x_j-))$ $= W^{-1}
(W(x_j))$, the function $W^{-1} : \overline{W(\bb R)} \to \bb R$ is
not one to one and the process $Y(t|u) = W^{-1} (Z(t|W(u)))$ with
space state $\bb R$ could be  a non Markov process. The following
proposition clarifies the Markovian properties of the process $Y$

\begin{prop}
\label{nsf}
The stochastic process $Y$ has continuous paths. It is Markov but not
strongly Markov. In particular, it is not a Feller Markov process.
\end{prop}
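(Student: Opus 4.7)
My plan is to work with the explicit representation $Y(t|u)=W^{-1}(Z(t|W(u)))$ and transfer properties of the strong Markov process $Z$ through the map $W^{-1}$. The key structural fact is that $W^{-1}$ is injective on $\overline{W(\RR)}$ except on the countable family of pairs $\{W(x_j-),W(x_j)\}$ (one for each jump point $x_j$ of $W$), which it collapses onto $x_j$. For \emph{continuity of $Y$}: the generalized inverse $\psi^{-1}(\cdot|u)$ has a jump at $t$ exactly when $u+B$ traverses a maximal interval in the complement of $\supp(\nu)=\overline{W(\RR)}$, i.e.\ one of the gaps $(W(x_j-),W(x_j))$; the only discontinuities of $Z(\cdot|W(u))$ are thus jumps between the two endpoints of such a gap, and these are erased by $W^{-1}$.

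For the \emph{Markov property}, Theorem \ref{salomone} provides a density $q_t(W(u),\cdot)$ of the law of $Z(t|W(u))$ with respect to $\nu=dW^{-1}$; since $W^{-1}$ is continuous, $\nu$ is atomless, so $\PP(Z(t|W(u))\in\{W(x_j-),W(x_j)\})=0$ for every $j$, and summing yields $Z(t|W(u))=W(Y(t|u))$ almost surely for every fixed $t>0$. Combining this with the Markov property of $Z$ and the inclusion $\Ff_t^Y\subset\Ff_t^Z$, for any bounded Borel $F$,
\[
\EE\bigl[F(Y(t+h|u))\,\big|\,\Ff_t^Y\bigr]
\;=\;\EE\bigl[G(Z(t|W(u)))\,\big|\,\Ff_t^Y\bigr]
\;=\;G\bigl(W(Y(t|u))\bigr)\;,
\]
where $G(z):=\EE[F(W^{-1}(Z(h|z)))]$; since the right-hand side depends on the past only through $Y(t|u)$, $Y$ is Markov.

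To \emph{disprove the strong Markov property} (which also rules out the Feller property, since every Feller process is strongly Markov), fix a jump point $x_j$ of $W$ and choose $u_0<x_j<u_0'$. The hitting times
\[
\tau\;=\;\inf\{t\ge 0:Y(t|u_0)=x_j\}\;,\qquad \tau'\;=\;\inf\{t\ge 0:Y(t|u_0')=x_j\}
\]
are a.s.-finite $\Ff^Y$-stopping times, and by continuity of $B$ the value of $Z$ at each is the near endpoint of the gap: $Z(\tau|W(u_0))=W(x_j-)$, $Z(\tau'|W(u_0'))=W(x_j)$ almost surely. If $Y$ were strongly Markov, the strong Markov property applied at $\tau$ and $\tau'$ (which are also $\Ff^Z$-stopping times) would force the laws of $W^{-1}\!\circ Z(\cdot|W(x_j-))$ and $W^{-1}\!\circ Z(\cdot|W(x_j))$ to agree, both being the law of $Y(\cdot|x_j)$. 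But these laws differ sharply: the event $\{W^{-1}(Z(h|z))>x_j\}$ equals $\{Z(h|z)>W(x_j)\}$, and for small $h>0$ its probability tends to $1$ when $z=W(x_j)$ (the $\nu$-free gap just below $W(x_j)$ prevents the driving Brownian motion from accumulating $\psi$-time on the left, so at $\psi$-time $h$ the process $Z$ sits above $W(x_j)$ with probability $\to 1$), whereas it tends to $0$ when $z=W(x_j-)$ (the Brownian path cannot cross the positive gap width $W(x_j)-W(x_j-)$ within the vanishing real time $\psi^{-1}(h|x_j-)$).

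The main technical step I anticipate is rigorously establishing these short-time asymptotics, i.e.\ controlling the joint behaviour of the driving Brownian motion $B$ and the random time change $\psi^{-1}(h|\cdot)$ near each gap endpoint. This can be carried out either by a direct Brownian argument using $\psi^{-1}(h|z)\to 0$ as $h\downarrow 0$ together with the reflection principle applied to the crossing of the macroscopic distance $W(x_j)-W(x_j-)$, or from short-time heat-kernel estimates on $q_h$ provided by Theorem \ref{salomone}. The other ingredients--measurability of $\tau,\tau'$, the identification of $Z$ at these stopping times via continuity of $B$, and the transfer of the distributional discrepancy through $W^{-1}$--are routine.
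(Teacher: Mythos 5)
Your proposal is correct and follows essentially the same route as the paper: continuity via the collapsing of gap-traversals by $W^{-1}$, the Markov property via the a.s. identity $Z(t|W(u))=W(Y(t|u))$ (obtained exactly as in the paper from the atomlessness of $\nu$ and the density of Theorem \ref{salomone}), and the failure of the strong Markov property by exploiting the asymmetry at a jump point $x_j$, where hitting from below leaves $Z$ at $W(x_j-)$ while a genuine restart from $x_j$ corresponds to $W(x_j)$.

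The one place where you diverge is the choice of distinguishing event. You compare the post-$\tau$ and post-$\tau'$ laws through the fixed-time marginals $\PP\bigl(Y(h)>x_j\bigr)$ and must establish short-time asymptotics (you rightly flag this as the main technical step; your sketch via $\psi^{-1}(h)\downarrow 0$ and the impossibility of crossing the positive gap $W(x_j)-W(x_j-)$ in vanishing real time is sound, once supplemented by the facts that $Z(h)\in\mathrm{supp}(\nu)$ a.s. and $\PP(Z(h)=W(x_j))=0$). The paper instead uses the event $A=\{\exists\,\delta>0: Y_s\le y \text{ for } 0\le s\le \delta\}$, whose $Z$-counterpart has probability exactly $0$ from $W(y)$ (Brownian motion immediately exceeds its starting level) and exactly $1$ from $W(y-)$, so no limiting argument is needed; on the other hand, your two-sided comparison of $\tau$ and $\tau'$ has the minor advantage of not relying on the convention that $\PP^Y_{x_j}$ is defined through $W(x_j)$ rather than $W(x_j-)$. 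Both arguments are valid.
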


\begin{proof}
The continuity of paths can be proved by the same arguments used in
Lemma 4 of \cite{KK}.  In what follows we denote respectively by $\PP
_y ^Y$, $\PP_z ^Z$ the law of $Y(\cdot | y)$ and $Z(\cdot| z)$, where
$y\in \RR$, $z\in \overline{W(\RR)}$, and by $\EE _y^Y $, $\EE _z ^Z$
the related expectations.  Moreover, we write $\o $ for a generic path
in $D( [0,\infty), \RR)$ and $\theta _t \o$ for the time--translated
path $\theta _t \o (s) = \o (t+s)$.  \smallskip

First we prove that $Y$ is a Markov process. To this aim, we fix $y\in
\RR$, $t>0$ and let $\Aa, \Bb \subset D([0,\infty),\RR)$ be of the
form
\begin{eqnarray*}
\!\!\!\!\!\!\!\!\!\!\!\!\! &&
\Aa =\left\{\o\,:\; \o(t_i)\in
[a_i,b_i]\,\forall\, 1\leq i \leq n\right\},\\
\!\!\!\!\!\!\!\!\!\!\!\!\! && \qquad
\Bb =\left\{\o\,:\; \o(s_i)\in [c_i,d_i]\,\forall\, 1\leq i \leq
k\right\},
\end{eqnarray*}
where  $0\leq t_1<t_2<\cdots <t_n<t$ and $0\leq s_1<s_2<\cdots
<s_k$.

Due to the definition of $Y$,
\begin{equation*}
\PP_y ^Y \left ( \o \in \Aa,\, \theta _t\o  \in \Bb\right)
\;=\; \PP ^Z_{ W(y) } \left(  W^{-1}\circ \o\in \Aa  ,  \,
\theta _t (W^{-1}\circ\o) \in \Bb  \right) \;.
\end{equation*}
Since $Z$ is a Markov process, the previous expresssion is
equal to
\begin{equation*}
\EE ^Z _{W(y)} \Big[ \mb 1 \{ W^{-1}\circ \o\in \Aa \}
\PP ^Z _{Z(t|W(y) ) } ( W^{-1}\circ \o\in \Bb )\Big]\,.
\end{equation*}

We claim that $ Z(t|W(y))= W ( Y (t|y))$ with probability $1$.  In
fact, we know that $Z(t|W(y))$ has value $ W( Y(t|y)-)$ or $W(Y(t|y)
)$. If $ W( Y(t|y)-)=W(Y(t|y) )$ the conclusion is trivial. Otherwise,
it must be $ W( Y(t|y)-)= W(x_j-)$ for some $j$. Since a.s. $\nu$ has
no atoms, the countable set $\{W(x_j-):j\ge 1\}$ has zero
$\nu$--measure and due to Theorem \ref{salomone}
\begin{equation*}
\PP\Big[ Z(t| W(y) )\in \{W(x_j -) : j\ge 1\}\Big] \;=\; 0 \;.
\end{equation*}
This allows to conclude that $ Z(t|W(y))= W (Y (t|y))$ with
probability $1$.  Therefore,
\begin{equation*}
\PP ^Z _{ Z(t|W(y) ) } ( W^{-1}\circ \o\in \Bb )
\;=\; \PP ^Z_{ W( Y(t|y) ) } (W^{-1}\circ \o\in \Bb )
\end{equation*}
$\PP ^Z_{W(y)}$--a.s. By definition of $Y$, putting all previous
identities together, we get that
\begin{eqnarray*}
\PP_y ^Y \big[ \o \in \Aa,\, \theta _t\o  \in \Bb\big]
&=& \EE ^Z_{W(y) }\Big[ \mb 1 \big\{  W^{-1}\circ \o\in \Aa \big\}
\, \PP ^Z_{ W( Y(t|y) ) } \big[ W^{-1}\circ \o\in \Bb \big] \, \Big ] \\
&=& \EE_y ^Y \Big[ \mb 1\{ \o \in \Aa\}
\, \PP^Y_ {Y(t|y)} \big[ \o \in \Bb \big]\, \Big]\,.
\end{eqnarray*}
This proves that $Y$ is a Markov process.
\smallskip

 We now show that $Y(t|u)$ is not strongly Markovian with
respect to the filtration $\mc F^Y_t = \sigma (Y_s : s\le t)$. Fix
$y$ in $\bb R$ such that $0<W(y-) < W(y)$ and consider the sets
\begin{eqnarray*}
\!\!\!\!\!\!\!\!\!\!\! &&
A\;=\; \Big\{ \exists \delta > 0 : Y_s \le y \text{ for } 0\le s\le
\delta\Big\}\;,\quad \\
\!\!\!\!\!\!\!\!\!\!\! && \qquad
B\;=\; \Big\{ \exists \delta > 0 : Z_s \le W(y) \text{ for } 0\le s\le
\delta\Big\}\;.
\end{eqnarray*}
Let $\tau $ be the first time the process $Y_t$ reaches $y$: $\tau =
\inf \{ t\ge 0 : Y_t = y\}$ and let $\s$ be the first time the process
$Z_t$ reaches $W(y)$ or $W(y-)$: $\s = \inf \{ t\ge 0 : Z_t = W(y)
\text{ or } Z_t = W(y-)\}$. Since $Y_t$ has continuous paths, $\tau $
is a stopping time for the filtration $\{\mc F^Y_t : t\ge 0\}$ and
$\s$ is a stopping time for the filtration $\{\mc F^Z_t : t\ge 0\}$,
where $\mc F^Z_t = \sigma (Z_s : s\le t)$.

Assume, by contradiction, that $Y$ is strongly Markov. By the strong
Markov property,
\begin{equation*}
\bb P^Y_0 \big[ \theta_\tau  Y \in A \big] \;=\; \bb E^Y_0 \big[ \bb
P^Y_{Y_\tau} \big[ A\big] \, \big]\;.
\end{equation*}
Since $Y_\tau = y$, the previous expectation is equal to $\bb
P^Y_{y} [ A]$. By definition of the process $Y$, this probability
corresponds to $\bb P^Z_{W(y)} [ B]$. This last probability is equal
to $0$ in view of the construction of $Z$ through the Brownian
motion.

On the other hand, by construction of the Markov process $Y(t|0)$,
by definition of the random times $\tau$, $\s$ and since $\s$ is a
stopping time,
\begin{equation*}
\bb P^Y_0 \big[ \theta_{\tau} Y \in A \big] \;=\; \bb P^Z_0 \big[
\theta_{\s} Z \in B \big] \;=\; \bb E^Z_0 \big[ \bb P_{Z_\s} \big[
B\big] \, \big]\;.
\end{equation*}
Since $0<W(y-)$, $Z_{\s} = W(y-)$. In particular, $ P_{Z_\s} $ is
equal to $\bb P_{W(y-)} \big[ B\big]$ and this probability is equal
to $1$ by construction of the process $Z$ through the Brownian
motion.
\smallskip

Finally, $Y$ cannot be a  Feller  Markov process since otherwise it
would be   strong Markov process.
\end{proof}

\subsection{The generator of the process $Y(t |u)$.}\label{siluro}

We obtain in this subsection the generator of the Markov process
$Y$. To keep notation simple, we denote $Z(t|W(u))$ by $Z_t$ and
$Y(t|u)$ by $Y_t$. Moreover, we write $\{x_j\,:\, j\geq 1\}$ for the
jump points of $W$, which form a countable dense set.

Denote by $d_W$ the distance in $\bb R$ defined by $d_W(x,y) = |W(x) -
W(y)|$ and by $\bb R_W$ the completion of $\bb R$ with respect to this
distance. It is easy to check that $\bb R_W$ is obtained by dividing
in two each jump point of $W$: $\bb R_W = \bb R \cup \{x_j^- : j\ge
1\}$ and
\begin{equation*}
d_W(x_j^-, y) \;=\; |W(x_j-) - W(y)|\;, \quad
d_W(x_j^-, x_k^-) \;=\; |W(x_j-) - W(x_k-)|
\end{equation*}
for every $y$ in $\bb R$, $j$, $k\ge 1$.

Recall that $\overline{W(\bb R)} = \{W(x) : x\in \bb R\} \cup
\{W(x_j-) : j\ge 1\}$.  Let
\begin{equation*}
W_e: \bb R_W \to \overline{W(\bb R)}
\end{equation*}
be given by $W_e(x) = W(x)$ for $x$ in $\bb R$, $W_e(x_j^-) = W(x_j-)$
for $j\ge 1$. $W_e$ is an isometry from $(\bb R_W, d_W)$ to
$(\overline{W(\bb R)}, d)$, where $d$ is the usual Euclidean distance
in $\bb R$. Its inverse, $W_e^{-1} : \overline{W(\bb R)} \to \bb R_W$,
is given by $W_e^{-1} (W(x)) = x$, $W_e^{-1} (W(x_j-)) = x_j^-$.
\smallskip

Since $W_e$ is an isometry, all the results concerning the process
$Z_t$ with state space $\overline{W(\bb R)}$ can be trivially restated
in terms of the pullback process $X_t = W_e^{-1}(Z_t)$ with state
space $\bb R_W$. In particular, $X_t$ is a strong Markov process and,
denoting by $\mc Q_t$ its Markov semigroup, $\{\mc Q_t : t\geq 0\}$ is
a strongly continuous semigroup of contraction operators acting on the
space $C_0(\bb R_W)$ with norm $\|\cdot \|_\infty$. Let us describe
its generator $\mc L_W$.

We have seen that the domain of the generator $L_W$ of the Markov
process $Z_t$ is $D_W$. Hence the domain of the generator $\mc L_W$ is
given by the set $\mc D_W$ where $\mc D _W= \{ f\circ W_e: f\in
D_W\}$. Let $f, h\in C(\overline{W(\bb R)})$ be as in (\ref{mucca})
for suitable constants $a,b$. By a change of variables, it simple to
check that the functions $F= f\circ W_e $ and $H=h\circ W_e $,
belonging to $C(\bb R_W)$, satisfy the identity
\begin{equation}
\label{liliana}
F(x) \;=\; a \;+\; b W(x)\; +\; \int_0 ^x  d W(y) \int_0^y H(z) dz
\;, \quad x\in\bb R \;,
\end{equation}
and therefore,  by continuity,
\begin{equation*}
F(x_j^-)= F(x_j-)\;, \qquad \forall j \geq 1\;.
\end{equation*}
Viceversa, if $F , H\in C(\bb R_W)$ fulfill (\ref{liliana}), then $f=
F\circ W_e^{-1}\in C( \overline{W(\bb R)}) $ and $h=H\circ W_e^{-1}\in
C( \overline{W(\bb R)})$ satisfy (\ref{mucca}). In particular, we get
that
\begin{equation*}
\mc D_W\;=\; \left\{ F\in C_0(\bb R_W)\,:\, \exists H\in C_0(\bb R_W),\;
\exists a, b \in \RR\text{ satisfying } (\ref{liliana}) \right\}\;.
\end{equation*}
Moreover, due to the above observations, we obtain that $\mc L_W F =
H$ for all $F\in \mc D _W$ and $H\in C_0(\RR _W)$ as in
(\ref{liliana}).  \medskip

We now turn to the process $Y_t$. Recall the definition of the spaces
$C_{W,0}(\bb R)$ and $\mf D_W$ introduced in Section \ref{reginetta}:
$C_{W,0}(\bb R)$ is the set of functions $F:\bb R \to\bb R$ which are
c\`adl\`ag, whose discontinuities form a subset of $\{x_j : j\ge 1\}$
and which vanish at $\pm\infty$, while $\mf D_W$ denotes the set
of functions $F$ in $C_{W,0}(\bb R)$ such that
\begin{equation*}
F(x) \;=\; a \;+\; b W(x)\; +\; \int_0 ^x  dW(y) \int_0^y G(z) dz\;,
\qquad \forall x \in \bb R\;,
\end{equation*}
for some function $G$ in $C_{W,0}(\bb R)$ and some $a$, $b$ in $\bb
R$.  $G$ is univocally determined and the linear operator $\mf L_W :
\mf D_W \to C_{W,0}(\bb R)$ is defined by setting $\mf L_W F = G$.
Formally, $\mf L_W \;=\; \frac{d}{dx} \frac{d}{dW}$.
\medskip

\noindent{\bf Proof of Theorem \ref{mt5}}.
Since $W^{-1} (W(x_j-))= W^{-1}(W(x_j))$ we can write $ Y_t= \Phi
(X_t)$ where $\Phi: \RR_W\rightarrow \RR$ is defined as $\Phi (x) =x $
for all $x\in \RR$ and $\Phi (x_j^-)= x_j$ for all $j\geq 1$.

Given a function $H:\bb R \rightarrow \bb R$, define the function $\mf
E H: \bb R_W \rightarrow \bb R$ as $\mf E H= H\circ \Phi$.  Viceversa,
given a function $h: \bb R_W \rightarrow \bb R$, define $\mf P h: \bb
R \rightarrow \bb R$ as $\mf P h (x)= h(x)$ for all $x\in \bb R$. One
can easily check that $\mf E$ maps $C_{W,0}(\bb R)$, $\mf D_W$
bijectively onto $C_0 (\bb R_W)$, $\mc D _W$ with inverse function
given by $\mf P$.  Moreover,
\begin{equation}
\label{sirchia1}
\mf L_W H\; =\; \mf P \mc L_W \mf E H\,, \qquad \forall H\in \mf
D_W\;.
\end{equation}
Since  $ Y_t= \Phi (X_t)$, for all $H\in C_{W,0}(\bb R)$ we can
write
\begin{equation*}
P_t H (u) = \EE \big[ H (Y(t|u))\big] = \EE \big[ H\circ \Phi
(X(t|u))\big] =   \mc Q _t \bigl ( \mf E H \bigr) (u)
\;, \qquad \forall  u\in \bb R\,,
\end{equation*}
thus implying  that
\begin{equation}
\label{sirchia2}
P_t H =   \mf P \mc Q_t \mf E H\,, \qquad \forall H \in C_{W,0} (\bb
R)\;.
\end{equation}
Due to the above identity and since $C_0(\bb R_W) $ is $\mc Q
_t$--invariant, the space $ C_{W,0} (\bb R)$ is $P_t$--invariant.
Since $\mf E$ is an isomorphism between the normed spaces $C_{W,0}
(\bb R)$ and $C _0 (\bb R_W) $ (endowed of the uniform norm $\|\cdot
\|_\infty$) and since the identities (\ref{sirchia1}) and
(\ref{sirchia2}) hold, the fact that $\{\mc Q _t : t\geq 0\}$ is a
strongly continuous contraction semigroup on $C _0 (\bb R_W)$ having
generator $\mc L _W$ with domain $\mc D_W$ implies that $\{P_t : t\geq
0\}$ is a strongly continuous contraction semigroup on $C_{W,0} (\bb
R)$ with generator $\mf L_W $ having domain $\mf D_W$.
\qed
\medskip

We conclude this subsection with a result which follows easily from
Theorem \ref{salomone} by a change of variables. It will be
particularly useful in the study of the limiting behavior of the
tagged particle in the exclusion process with random conductances.


\begin{proposition}
\label{pierpaolo}
The Borel function $p$ defined on $ (0,\infty)\times \RR \times \RR $
as
\begin{equation*}
p_t(x,y)= q_t(W(x), W(y))
\end{equation*}
is the transition density function of the Markov process $Y$ w.r.t.\!
the Lebesgue measure. $p_t(\cdot, \cdot)$ is a strictly positive
symmetric function and
\begin{equation*}
\int_{\bb R} p_t(x,y) dy\;=\; 1 \;,\quad 
(P_t f)(x) \;=\; \int p_t(x,y) f(y) \, dy\;,
\end{equation*}
for all $t>0$, $x$ in $\bb R$ and functions $f$ in $C_{W,b} (\RR)$.
\end{proposition}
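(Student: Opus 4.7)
The plan is to deduce Proposition \ref{pierpaolo} from Theorem \ref{salomone} by a change of variables, using the relations $Y(t|u) = W^{-1}(Z(t|W(u)))$ and $\int g(v)\,\nu(dv) = \int g(W(u))\,du$ that tie together the two processes and measures. The strict positivity and the symmetry of $p_t(x,y) = q_t(W(x),W(y))$ are inherited immediately from the corresponding properties of $q_t$ stated in Theorem \ref{salomone}, so nothing has to be done for those two points beyond observing the definition.

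For the normalization and the semigroup representation, I would proceed as follows. First, by Theorem \ref{salomone} we have, for any $f \in C_{W,b}(\bb R)$,
\begin{equation*}
E\big[f(Y(t|x))\big] \;=\; E\big[f(W^{-1}(Z(t|W(x))))\big] \;=\; \int f(W^{-1}(v))\, q_t(W(x),v)\,\nu(dv),
\end{equation*}
where I need to justify that the outer $f$ can be written as $f \circ W^{-1}$ inside the expectation: this is legitimate because, as observed in the proof of Proposition \ref{nsf}, the countable set $\{W(x_j-): j\ge 1\}$ has zero $\nu$--measure, hence with probability one $Z(t|W(x)) = W(Y(t|x))$ and $W^{-1}(Z(t|W(x))) = Y(t|x)$. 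Next, applying $\int g(v)\,\nu(dv) = \int g(W(u))\,du$ with $g(v) = f(W^{-1}(v))\,q_t(W(x),v)$, this becomes
\begin{equation*}
\int f(W^{-1}(W(u)))\, q_t(W(x),W(u))\,du\;=\;\int f(u)\, p_t(x,u)\,du,
\end{equation*}
where in the last equality I use that $W^{-1}(W(u)) = u$ for every $u$ which is not a jump point of $W$, hence Lebesgue-a.e., and that any modification of the integrand on the countable set of jump points of $W$ does not affect the Lebesgue integral. Taking $f \equiv 1$ (or, more precisely, applying the same change of variables to $\int q_t(W(x),v)\,\nu(dv) = 1$) yields $\int p_t(x,y)\,dy = 1$.

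The only delicate point is the identification $W^{-1}(Z(t|W(x))) = Y(t|x)$ almost surely when $Z(t|W(x))$ happens to equal some $W(x_j-)$: as in the argument already used for the Markov property in Proposition \ref{nsf}, this event has probability zero because $\nu$ has no atoms on the countable set $\{W(x_j-)\}$ and $q_t(W(x),\cdot)$ is a density with respect to $\nu$. Aside from this measure-theoretic bookkeeping, the proposition is a direct translation of Theorem \ref{salomone} through the isometry $W_e$ and its compatibility with Lebesgue measure, so I do not expect any substantive obstacle.
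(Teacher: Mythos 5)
Your proposal is correct and follows exactly the route the paper itself indicates: the paper offers no detailed argument beyond the remark that the proposition ``follows easily from Theorem \ref{salomone} by a change of variables,'' and your write-up supplies precisely that change of variables, together with the only genuinely delicate point (the $\nu$-null set $\{W(x_j-)\}$, which lets you pass between $f\circ W^{-1}$ evaluated along $Z$ and $f$ evaluated along $Y$, and use $W^{-1}(W(u))=u$). Nothing further is needed.
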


Next result is a consequence of Theorem \ref{mt5} and this
proposition.

\begin{corollary}
\label{s05} Fix a function $H$ in $C_0(\bb R)$.

\renewcommand{\theenumi}{\roman{enumi}}
\renewcommand{\labelenumi}{{\rm (\theenumi)}}

\begin{enumerate}
\item For every $t\ge 0$, $P_tH$ is a c\`adl\`ag function vanishing at
  infinity.

\item If $H$ has compact support, $P_t H$ belong to $L^1(\bb R)$ and
\begin{equation*}
\int _\RR  du\, P_t H  (u) \;=\; \int _\RR du\, H (u)\,.
\end{equation*}

\item As $t\downarrow 0$, $P_tH$ converges to $H$ pointwisely. If $H$
  has compact support this limit takes also place in $L^1(\bb R)$. In
  this case, $P_{t+s} H$ converges to $P_t H$ in $L^1(\bb R)$ as $s\to
  0$. 

\item For any function $H$ in $C_c(\bb R)$ and any function $\rho$
  in $C_b(\bb R)$,
\begin{equation*}
\int du \, (P_t H)(u) \, \rho (u) \;=\;  
\int du \, H(u) \, (P_t \rho) (u)\;. 
\end{equation*}
\end{enumerate}
\end{corollary}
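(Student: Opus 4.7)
The plan is to deduce each of the four items from Theorem \ref{mt5} together with the kernel representation $P_tH(u)=\int p_t(u,y)H(y)\,dy$ supplied by Proposition \ref{pierpaolo}. Part (i) is immediate: every $H\in C_0(\bb R)$ lies in $C_{W,0}(\bb R)$ (its discontinuity set is empty, hence trivially contained in $\bb D(W)$), and Theorem \ref{mt5} asserts that $C_{W,0}(\bb R)$ is $P_t$--invariant, so $P_tH$ is c\`adl\`ag and vanishes at $\pm\infty$.

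For (ii) and (iv) I would exploit the symmetry $p_t(u,y)=p_t(y,u)$ together with $\int p_t(u,y)\,dy=1$ (and hence, by symmetry, $\int p_t(u,y)\,du=1$). When $H\in C_c(\bb R)$ the application of Fubini's theorem is legitimate because $\iint p_t(u,y)\mathbf 1_{\{y\in\mathrm{supp}\,H\}}\,du\,dy=|\mathrm{supp}\,H|<\infty$. Integrating $P_tH$ against Lebesgue measure and swapping the order of integration yields (ii); integrating against $\rho\in C_b(\bb R)$ and again using the symmetry of $p_t$ yields the self--adjointness identity (iv), the Fubini bound being $\|\rho\|_\infty|\mathrm{supp}\,H|$ in that case.

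For (iii), pointwise (indeed uniform) convergence $P_tH\to H$ as $t\downarrow 0$ is part of the strong continuity of $\{P_t\}$ on $(C_{W,0}(\bb R),\|\cdot\|_\infty)$ delivered by Theorem \ref{mt5}. To upgrade to $L^1$ convergence when $H\in C_c(\bb R)$ I would split $H=H^+-H^-$ into nonnegative compactly supported parts: each $P_tH^\pm\ge 0$ converges pointwise to $H^\pm$ and preserves mass by (ii), so Scheff\'e's lemma yields $P_tH^\pm\to H^\pm$ in $L^1(\bb R)$. The continuity statement $P_{t+s}H\to P_tH$ in $L^1$ as $s\to 0$ follows from the semigroup identity $P_{t+s}H=P_sG^+-P_sG^-$ with $G^\pm=P_tH^\pm\in L^1(\bb R)\cap C_{W,0}(\bb R)$ by (i) and (ii); Scheff\'e applies once more to each nonnegative summand, since $\int P_sG^\pm=\int G^\pm$ by the same Fubini computation as in (ii) (which only requires the integrand to be in $L^1\cap L^\infty$) and $P_sG^\pm\to G^\pm$ pointwise by strong continuity of $\{P_s\}$ on $C_{W,0}(\bb R)$.

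The only delicate step is the $L^1$--continuity in (iii): $P_tH$ is no longer compactly supported, so a direct estimate on the support fails. Decomposing $P_tH$ into its nonnegative and nonpositive parts, each of which inherits membership in $L^1\cap C_{W,0}$ from (i) and (ii), is the device I would employ to apply Scheff\'e's lemma cleanly.
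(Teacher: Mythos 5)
Your proposal is correct, and for items (i), (ii), (iv) and the first two claims of (iii) it follows the paper's own route exactly: (i) is the $P_t$--invariance of $C_{W,0}(\bb R)$ from Theorem \ref{mt5}, (ii) and (iv) are the symmetry of $p_t$ plus Fubini--Tonelli from Proposition \ref{pierpaolo}, and the $L^1$ convergence $P_tH\to H$ is Scheff\'e applied to the nonnegative parts after mass conservation. The only place you diverge is the last claim of (iii). The paper gets it in one line from the observation that $P_t$ is a contraction on $L^1(\bb R)$ (a direct consequence of $\int p_t(u,y)\,du=1$), so that $\Vert P_{t+s}H-P_tH\Vert_1\le\Vert P_{|s|}H-H\Vert_1$ regardless of the sign of $s$; you instead re-run Scheff\'e on $P_sG^\pm$ with $G^\pm=P_tH^\pm$, which is legitimate --- your remark that Tonelli gives $\int P_sG^\pm=\int G^\pm$ for nonnegative $G^\pm\in L^1\cap C_{W,0}$ without compact support is exactly the point that needs checking, and you check it. Two small caveats: the identity $P_{t+s}H=P_s(P_tH)$ only covers $s\ge 0$, whereas the statement is a two-sided limit; to cover $s<0$ either invoke the contraction bound as the paper does, or apply Scheff\'e directly to the map $r\mapsto P_rH^{\pm}$ at $r=t$, using that the strong continuity of the semigroup on $C_{W,0}(\bb R)$ at interior times is two-sided (this also makes the detour through $G^\pm$ unnecessary). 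What the paper's contraction argument buys is precisely this uniformity in the sign of $s$ and a reusable fact ($L^1$-contractivity) that is invoked again later in the article.
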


\begin{proof} 
Statement (i) follows from Theorem \ref{mt5} and statement (ii) from
Proposition \ref{pierpaolo}.  The first claim of (iii) follows from
Theorem \ref{mt5}. To prove the second claim of (iii) assume, without
loss of generality, that $H$ is positive and has compact support. By
part (ii) of this corollary, $\int P_tH (u) du$ is equal to $\int H(u)
du$.  In particular, by Scheff\'e Theorem, $P_tH$ converges to $H$ in
$L^1(\bb R)$. It follows from Proposition \ref{pierpaolo} that the
semigroup $P_t$ is a contraction in $L^1(\bb R)$. The third claim of
(iii) follows from this observation and the convergence of $P_tH$ to
$H$ in $L^1(\bb R)$. Claim (iv) follows from the symmetry of the
transition density function $p_t(x,y)$ and Fubini's theorem.
\end{proof}

\section{Random walk with random conductances}
\label{sec4}

Fix a realization of the subordinator $W$ and $N\ge 1$, and recall the
definition of the random variables $\{c_x : x\in \bb Z\}$ given in
(\ref{moraviabis}). We examine in this section the limiting behavior
of the continuous--time random walk on $\bb Z$ which jumps from $x$ to
$x+1$ at rate $c_x$ and from $x$ to $x-1$ are rate $c_{x-1}$.  We
first recall some results due to Stone \cite{S}.  \smallskip

Given a Radon measure $\mu$ on $\RR$ with support, denoted
by $\text{supp}(\mu)$, unbounded from below and from above, for each
$x\in\text{supp}(\mu)$ and $t\geq 0$ set
\begin{equation}
\label{wwf1} \psi (t|x,\mu) = \int _\RR L(t,y-x) \mu (dy) \;, \quad
\psi ^{-1}(t|x,\mu )=\sup\left\{ s\geq 0\,:\, \psi (s|x,\mu ) \leq t
 \right\}\;.
\end{equation}
Then $\psi (\cdot| x,\mu )$ is a continuous function and $ \psi
^{-1}(\cdot|x,\mu )$ is a nondecreasing c\`adl\`ag function. Set
\begin{equation*}
Z(t|x,\mu)= B \left( \psi ^{-1}(t|x,\mu)\right)+x\;.
\end{equation*}
$Z= \{ Z(t|x,\mu) :\, t\geq 0 \} $, defined on probability space
$\left(\mathbb{X},\mathbb{F}, \PP\right)$ as the Brownian motion $B$,
is a strong Markov process with state space $\text{supp}(\mu)$ and
paths in the Skohorod space $D([0,\infty),\RR)$ endowed of the
Skohorod metric $d_S$ \cite[V.2.11]{bg}.

By Theorem 1 and Corollary 1 in \cite{S}, we have

\begin{prop}
\label{pietra} Let $\{\mu_n\}_{n\geq 0} $, $\mu$   be Radon measures
on $\RR$ with support unbounded from below and from above. Suppose
that:
\begin{itemize}
\item $\mu_n\to \mu$ vaguely,
\item if $y_n \in \text{supp}(\mu_n) $ is a converging sequence as
  $n\uparrow\infty$, then $\lim _{n\uparrow\infty}y_n\in
  \text{supp}(\mu)$.
\end{itemize}
Let $x_n\in \text{supp}(\mu) $ be a converging sequence with
$\lim_{n\uparrow\infty}x_n =x$. Then,
\begin{equation*}
\lim_{n\uparrow\infty}
d_S \left( Z (\cdot| x_n, \mu_n), Z(\cdot |x,\mu)\right)=0 
\quad \PP\text{ a.s. }
\end{equation*}
\end{prop}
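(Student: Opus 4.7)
The plan is to follow Stone's approach: fix a realization of the Brownian motion $B$ in a set of full $\PP$-measure on which $(t,y)\mapsto L(t,y)$ is jointly continuous and, for every $T>0$, the support of $L(T,\cdot)$ is the compact interval $[\min_{[0,T]}B,\max_{[0,T]}B]$. For such an $\omega$, write $\psi_n(t)=\psi(t|x_n,\mu_n)$ and $\psi(t)=\psi(t|x,\mu)$. The strategy is to transfer the convergence of the $Z$ processes to the deterministic convergence of the time changes $\psi_n^{-1}\to\psi^{-1}$, and then invoke path continuity of $B$.

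First I would show that $\psi_n\to\psi$ uniformly on $[0,T]$. For each fixed $t\in[0,T]$ the function $y\mapsto L(t,y-x_n)$ is continuous, compactly supported, and, since $x_n\to x$, eventually supported in a common compact $K\subset\RR$. Uniform continuity of $L(t,\cdot)$ gives $L(t,\cdot-x_n)\to L(t,\cdot-x)$ uniformly on $K$. Combining this with the vague convergence $\mu_n\to\mu$, tested against the compactly supported continuous approximants, and using the support hypothesis to exclude $\mu_n$-mass escaping just outside $\mathrm{supp}(\mu)$, yields $\psi_n(t)\to\psi(t)$ pointwise. Since $\psi_n$ and $\psi$ are continuous, nondecreasing and agree at $0$, a Dini-type argument upgrades pointwise convergence to uniform convergence on $[0,T]$.

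Next I would deduce Skorohod convergence of the inverses. The $\psi_n^{-1}$ and $\psi^{-1}$ are nondecreasing c\`adl\`ag, and $\psi^{-1}$ is continuous except on the at most countable set of values where $\psi$ is constant on a nondegenerate interval. Locally uniform convergence of continuous nondecreasing $\psi_n$ to $\psi$ forces $\psi_n^{-1}(t)\to\psi^{-1}(t)$ at every continuity point of $\psi^{-1}$, which is exactly Skorohod convergence for monotone c\`adl\`ag functions. Composing with the continuous path $B$ and adding the convergent constants $x_n\to x$ then gives
\begin{equation*}
Z(\cdot|x_n,\mu_n)\;=\;B(\psi_n^{-1}(\cdot))+x_n \;\longrightarrow\; B(\psi^{-1}(\cdot))+x \;=\; Z(\cdot|x,\mu)
\end{equation*}
in $d_S$ on $[0,T]$; letting $T\uparrow\infty$ along a countable sequence yields the claim on $[0,\infty)$.

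The main obstacle is the first step: turning vague convergence $\mu_n\to\mu$ into convergence of the integrals $\int L(t,\cdot-x_n)\,d\mu_n$ uniformly in $t$. The shift $x_n$ moves the (random) compact support of $L(t,\cdot)$, and one must control $\mu_n$-mass near the boundary of $\mathrm{supp}(\mu)$; this is precisely where the second hypothesis of the proposition — that limits of points in $\mathrm{supp}(\mu_n)$ lie in $\mathrm{supp}(\mu)$ — enters, ruling out ghost mass that could survive in the limit. Once this uniform convergence is secured, the remaining steps are routine manipulations of monotone functions and continuous Brownian paths.
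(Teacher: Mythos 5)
The paper does not actually prove this proposition: it is quoted directly from Theorem 1 and Corollary 1 of Stone's paper. So your sketch has to be judged against what a correct proof would require. Your overall architecture (convergence of the additive functionals $\psi_n=\psi(\cdot|x_n,\mu_n)$, then of their right-continuous inverses, then composition with the continuous Brownian path) is indeed Stone's, and the first step is sound --- in fact easier than you make it: $\psi_n(t)\to\psi(t)$ follows from vague convergence alone, writing $\int f_n\,d\mu_n-\int f\,d\mu=\int (f_n-f)\,d\mu_n+\bigl(\int f\,d\mu_n-\int f\,d\mu\bigr)$ and using $\sup_n\mu_n(K)<\infty$; the second hypothesis on supports is not needed there, and local uniformity then follows from monotonicity and continuity of the limit.

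The genuine gap is the assertion that pointwise convergence of $\psi_n^{-1}$ to $\psi^{-1}$ at the continuity points of $\psi^{-1}$ ``is exactly Skorohod convergence for monotone c\`adl\`ag functions.'' For the $J_1$ metric $d_S$ used in this paper (and by Stone) this is false: a single jump of the limit may be approximated by two nearby smaller jumps whose sizes add up to the limiting jump, which is compatible with pointwise convergence at continuity points but not with $J_1$ convergence. This is not a technicality; it is precisely the failure mode the second hypothesis exists to exclude. Take $\mu_n=\mu+n^{-1}\delta_{1/2}$ with $0,1\in\mathrm{supp}(\mu)$ and $(0,1)\cap\mathrm{supp}(\mu)=\emptyset$. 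Then $\mu_n\to\mu$ vaguely, $\psi_n\to\psi$ locally uniformly, and $\psi_n^{-1}\to\psi^{-1}$ at continuity points; yet each jump of $\psi^{-1}$ over an excursion of $B$ across $(0,1)$ is split by $\psi_n^{-1}$ into two jumps separated by a short interval on which $Z(\cdot|x_n,\mu_n)$ equals $1/2$, so that $d_S\bigl(Z(\cdot|x_n,\mu_n),Z(\cdot|x,\mu)\bigr)$ stays bounded away from $0$. Hence the support hypothesis must be invoked exactly at the inverse-function step --- to show that for large $n$ the intervals of constancy of $\psi_n$ (excursions of $B$ away from $\mathrm{supp}(\mu_n)$) do not subdivide those of $\psi$ --- and not, as you claim, in the first step. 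As written, your argument would ``prove'' the proposition without the second hypothesis, which the example above shows is impossible.
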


Let us recall another consequence of the results in  \cite{S} (see
also Section 2 in \cite{KK}):

\begin{prop}
\label{RW} 
Let $\{x_k\}_{k\in \ZZ}$ satisfy $x_k<x_{k+1}$, $\lim _{k\to \pm
  \infty} x_k=\pm \infty$. Fix positive constants $\{w_k\}_{k\in \ZZ}$
and set $\mu =\sum _{k\in \ZZ } w_k \d_{x_k}$. Then $Z(\cdot|x_j,\mu)$
is the continuous--time random walk on $\{x_k\}_{k\in \ZZ}$ starting
in $x_j$ such that after reaching site $x_k$ it remains in $x_k$ for
an exponential time with mean
\begin{equation*}
w_k \frac{ (x_{k+1}-x_k)(x_k- x_{k-1})}{ x_{k+1}-x_{k-1}} 
\end{equation*}
and then it jumps to $x_{k-1}$, $x_{k+1}$ respectively with
probability
\begin{equation*}
\frac{ x_{k+1}-x_k}{ x_{k+1}-x_{k-1} }\,\text{ and }\,
\frac{ x_k-x_{k-1}}{ x_{k+1}-x_{k-1} }\, \cdot
\end{equation*}
\end{prop}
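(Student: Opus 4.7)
The plan is to use the strong Markov property of $Z$ asserted in Subsection \ref{giobbe} (via [V.2.11] of \cite{bg}) together with elementary one-dimensional Brownian motion computations. First I would verify that $Z(\cdot|x_j,\mu)$ takes values in the lattice $\{x_k\}_k=\text{supp}(\mu)$, even though it is a priori defined on $\RR$. If $B(\psi^{-1}(s|x_j,\mu))\notin\{x_k-x_j\}_k$ for some $s$, then by continuity of $B$ there is a neighborhood of $\psi^{-1}(s|x_j,\mu)$ on which $B$ visits no point of $\{x_k-x_j\}_k$; consequently $L(\cdot,x_k-x_j)$ is locally constant for every $k$, so $\psi(\cdot|x_j,\mu)$ is locally constant around this real time, contradicting the definition of $\psi^{-1}$ as a supremum.

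Next, fix $k$ and analyze the initial sojourn of $Z$ starting from $x_k$. Until $B$ (starting at $0$) exits the interval $(x_{k-1}-x_k,x_{k+1}-x_k)$ at a real time $T$, it stays in an interval containing no shifted lattice point other than $0$, so $\psi(t|x_k,\mu)=w_k L(t,0)$ throughout this phase. Hence in clock time the process $Z$ sits at $x_k$ on $[0,w_k L(T,0))$ and then jumps to $x_{k-1}$ or $x_{k+1}$ according to which endpoint $B$ hits. By the strong Markov property of $Z$ and the fact that its state space is countable, the sojourn time $\tau:=w_k L(T,0)$ must be exponentially distributed and independent of the jump direction, and the claimed continuous-time Markov chain structure on $\{x_k\}$ follows.

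To identify the parameters, the jump probabilities come from the martingale/gambler's-ruin computation for $B$ started at $0$ in $(-a,b)$ with $a=x_k-x_{k-1}$, $b=x_{k+1}-x_k$, giving $\PP(T_{x_{k+1}-x_k}<T_{x_{k-1}-x_k})=(x_k-x_{k-1})/(x_{k+1}-x_{k-1})$ and the complementary probability $(x_{k+1}-x_k)/(x_{k+1}-x_{k-1})$. For the mean sojourn time, the occupation density formula yields $\EE[\tau]=w_k\EE[L(T,0)]=w_k G(0,0)$, where $G$ is the Green's function on $(-a,b)$ of the generator $d^2/du^2$ (consistent with the normalization $\EE[B(t)^2]=2t$) with Dirichlet data. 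Solving the corresponding ODE, or equivalently applying It\^o's formula to $y\mapsto (y+a)(b-y)$, gives $G(0,0)=ab/(a+b)=(x_{k+1}-x_k)(x_k-x_{k-1})/(x_{k+1}-x_{k-1})$, matching the formula in the statement.

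The main technical point is the rigorous identification of the clock-time sojourn with $w_k L(T,0)$: Brownian motion does not physically rest at any point, so one must verify that the time change contracts each excursion of $B$ away from $\{x_k-x_j\}_k$ into an instantaneous jump of $\psi^{-1}$, leaving $Z$ constant at $x_k$ throughout $[0,w_k L(T,0))$. Once this observation is secured, the remainder of the proof reduces to the standard Brownian computations recalled above and the strong Markov property on a countable state space.
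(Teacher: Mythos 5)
Your argument is correct, but note that the paper itself offers no proof of Proposition \ref{RW}: it is quoted directly as a consequence of Stone's results in \cite{S} (see also Section 2 of \cite{KK}). What you have written is essentially the standard derivation that underlies those references, specialised to a purely atomic speed measure $\mu=\sum_k w_k\d_{x_k}$, and all the key steps check out. The localisation argument showing $Z(s|x_j,\mu)\in\{x_k\}$ is sound (if $B(\psi^{-1}(s))$ avoided the lattice, $\psi$ would be constant on a neighbourhood of $\psi^{-1}(s)$ with value $\psi(\psi^{-1}(s))\le s$ by continuity and monotonicity, contradicting the supremum); the identification of the sojourn at $x_k$ with $[0,w_kL(T,0))$ is correct because for $s<w_kL(T,0)$ the time $\psi^{-1}(s)$ lies in the support of $dL(\cdot,0)\subset\{u:B(u)=0\}$; and your normalisation bookkeeping is right: with $\EE[B(t)^2]=2t$ (generator $d^2/du^2$) and $L$ the occupation density with respect to Lebesgue measure, the Green function of $(-a,b)$ satisfies $G(0,0)=ab/(a+b)$, giving the stated mean $w_k(x_{k+1}-x_k)(x_k-x_{k-1})/(x_{k+1}-x_{k-1})$, and this is consistent with the jump rates $N^{1+1/\a}c_x$ used later for $\nu_N$. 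Two minor points you leave implicit: the exponentiality and the independence of holding time and jump direction can be obtained either, as you do, from the (strong) Markov property of $Z$ on a countable state space, or directly from excursion theory ($L(T,0)$ is exponential and independent of $B_T$); and to assemble the full jump-chain description one should also observe that $Z$ makes only finitely many jumps in finite clock time, which follows since $B$ completes only finitely many crossings between adjacent lattice points on any compact real-time interval. Neither issue affects the validity of your proof.
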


Given $N\ge 1$, $x\in \ZZ$ consider the random walk $X_N(t|x)$ on
$\ZZ$ having starting point $x$ and generator
\begin{equation*}
\LL_N f (x) \;=\;
c_x N^{1+1/\alpha} \{ f(x+1)-f(x) \} \;+\; c_{x-1}
N^{1+1/\alpha} \{ f(x-1) - f(x) \}\; .
\end{equation*}
Denote the transition probabilities of $X_N$ by $p^N$:
\begin{equation}
\label{h1}
p^N_t (x,y) \;=\; P \big[ X_N (t|x) = y \big]
\end{equation}
for $x$, $y$ in $\bb Z$.  By symmetry, $p^N_t (x,y)= p^N_t (y,x)$.

For $N\ge 1$, let $\nu_N$ be the discrete measure defined by
\begin{equation*}
\nu_N = \frac 1N \sum _{x\in \ZZ}  \d_{W(x/N)}\;.
\end{equation*}
As $N\uparrow\infty$, $\nu_N$ converges to $\nu$ vaguely. By
definition of $\nu_N$ and by Proposition \ref{RW} the random walk
$X_N/N$ can be expressed as a space--time change of the Brownian motion:
\begin{equation*}
N^{-1} X_N (\cdot \,|\, x ) \sim W ^{-1}
 \left(Z\left ( \cdot  \,|\, W ( x/ N ),\nu_N\right)\right)\,,
\end{equation*}
where  ``$\sim$'' means that the two processes have the same
law.

Recall that $[u]_N=\lceil uN\rceil/N$ and define
\begin{equation*}
Y_N (t|u)=  W^{-1} \Big( Z\big ( t \,|\, W ([u]_N ),
\nu_N\big)\Big)\;.
\end{equation*}
It follows from the two previous identities that
\begin{equation*}
N^{-1} X_N (\cdot  \,|\, N [u]_N  ) \sim Y_N  (\cdot | u )  \;.
\end{equation*}

\begin{lemma}
\label{ufo}
Let $Y(t|u)= W^{-1}( Z(t\,|\, W(u),\nu ))$.  For all $u\in \RR$,
\begin{equation*}
\lim _{N\to\infty} d_S\left( Y  _N (\cdot |u),
Y (\cdot|u)\right)=0 \quad \mathbb{P}\, a.s.
\end{equation*}
Moreover, for all $u$ in $\bb R$ and for all $T>0$,
\begin{equation*}
\lim _{N\uparrow\infty }  \sup_{0\le t\le T} \big\vert
Y_N (t |u) - Y (t|u) \big\vert \;=\; 0 \quad  \mathbb{P}\,a.s.
\end{equation*}
\end{lemma}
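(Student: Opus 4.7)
The plan is to apply Stone's theorem (Proposition \ref{pietra}) to the measures $\nu_N$ and $\nu$, thereby obtaining Skorohod convergence of the quasi-diffusions $Z(\cdot\,|\,W([u]_N),\nu_N)$ to $Z(\cdot\,|\,W(u),\nu)$, and then to transfer this convergence through the generalized inverse $W^{-1}$.

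First I would verify the two hypotheses of Proposition \ref{pietra}. For the vague convergence of $\nu_N$ to $\nu$, fix $f\in C_c(\RR)$. Then
\begin{equation*}
\int f\,d\nu_N \;=\; \frac{1}{N}\sum_{x\in\ZZ} f(W(x/N))
\end{equation*}
is a Riemann sum for $\int f(W(u))\,du = \int f\,d\nu$. Since $W$ is c\`adl\`ag with $W(u)\to\pm\infty$ as $u\to\pm\infty$, the integrand $u\mapsto f(W(u))$ is bounded, compactly supported, and has at most countably many discontinuities; it is therefore Riemann integrable, and the sums converge to $\int f\,d\nu$. The second hypothesis is immediate: $\supp(\nu_N)\subset \overline{W(\RR)} = \supp(\nu)$ and the latter is closed. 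Regarding the starting points, $x_N:=W([u]_N)$ belongs to $\supp(\nu_N)$; and since $[u]_N\downarrow u$ and $W$ is right continuous, $x_N\to W(u)\in\supp(\nu)$. Proposition \ref{pietra} then yields, $\PP$--a.s.,
\begin{equation*}
d_S\bigl( Z(\cdot\,|\,W([u]_N),\nu_N),\; Z(\cdot\,|\,W(u),\nu)\bigr) \;\longrightarrow\; 0\;.
\end{equation*}

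Next I would show that the restriction of $W^{-1}$ to $\overline{W(\RR)}$ is continuous. The only delicate points are the values $W(x_j-)$ and $W(x_j)$ at the jump locations $x_j$: both map to $x_j$, and since the open gap $(W(x_j-),W(x_j))$ is disjoint from $\overline{W(\RR)}$, any sequence in $\overline{W(\RR)}$ converging to one of these values stays eventually on the corresponding side, and the convergence of $W^{-1}$ follows from the strict monotonicity of $W$. On any compact time interval the paths $Z(\cdot\,|\,W([u]_N),\nu_N)$ and $Z(\cdot\,|\,W(u),\nu)$ stay in a bounded subset of $\overline{W(\RR)}$ (since $B$ is locally bounded), on which $W^{-1}$ is uniformly continuous. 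A standard argument then shows that composition with $W^{-1}$ preserves Skorohod convergence, giving $Y_N(\cdot|u)\to Y(\cdot|u)$ in $d_S$, $\PP$-a.s.

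For the uniform convergence on $[0,T]$ I would invoke Proposition \ref{nsf}, which ensures that $Y(\cdot|u)$ has continuous paths almost surely. Since Skorohod convergence with a continuous limit upgrades to uniform convergence on compact intervals, the second assertion follows at once. The step I expect to require most care is the composition with $W^{-1}$: one has to observe that the open gaps of $\overline{W(\RR)}$ exactly match the jumps of the generalized inverse, so that the restriction $W^{-1}|_{\overline{W(\RR)}}$ is genuinely continuous even though $W^{-1}$ on the full line is not, and that although the restriction is not uniformly continuous, it is so on each bounded subset, which is what is needed to push the convergence through compositions on compact time intervals.
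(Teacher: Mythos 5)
Your proof is correct and follows essentially the same route as the paper: verify the hypotheses of Stone's theorem (Proposition \ref{pietra}) for $\nu_N\to\nu$ and for the starting points $W([u]_N)\to W(u)$, deduce Skorohod convergence of the $Z$-processes, push it through $W^{-1}$, and upgrade to uniform convergence on compacts using the path continuity of $Y$ from Proposition \ref{nsf}; the paper merely outsources the vague-convergence and composition steps to \cite{KK}, which you fill in explicitly. One small correction: your closing remark that $W^{-1}$ is not continuous on the whole real line is false --- since $W$ is strictly increasing, its generalized inverse is continuous everywhere (it is constant, equal to $x_j$, on each gap $[W(x_j-),W(x_j)]$), a fact the paper itself invokes later; this only makes your composition step easier, so nothing in your argument is lost.
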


\begin{proof}
If $y_N\in \text{supp}(\nu_N)$ and $y_N\to y \in \RR $ then $y\in
\text{supp}(\nu)$ (see the proof of Lemma 2 in \cite{KK}). Since $\lim
_{N\to \infty } W( [u]_N) = W(u)$ for all $u\in \RR$, and since
$\nu_N$ converges vaguely to $\nu$, by Proposition \ref{pietra},
\begin{equation*}
\lim_{N\to\infty} d_S\left( \,
Z\left(\cdot  \,|\, W ([u]_N ),\nu_N\right),
Z\left(\cdot   \,|\, W(u) ,\nu \right)\,\right)=0,\qquad
\mathbb{P}\text{--a.s.}
\end{equation*}
The first claim of the lemma follows by the same arguments used in the
proof of Proposition 1 in \cite{KK}.  On the other hand, since by
Proposition \ref{nsf}, $Y(\cdot|u)$ has continuous paths $\PP$--a.s.,
the second statement of the lemma follows from the first one.
\end{proof}

Recall that $P_t$ stands for the semigroup of the process $\{Y(t) : t\ge
  0\}$ and let $\{P^N_t:t\ge 0\}$ be the semigroup of the process
  $\{Y_N(t) : t\ge 0\}$. Hence, given a bounded Borel function $H$,
\begin{equation*}
P^N_t H (u) =  \bb E\big[ H\left( Y_N (t|u)\right)\big]\;.
\end{equation*}
It follows from Lemma \ref{ufo} and the dominated convergence theorem
that $P^N_t H$ converges pointwisely to $P_t H$ for every bounded
continuous function $H$ and every $t\ge 0$.

Since $W$ is strictly increasing, $W^{-1}$ is a continuous function.
In particular, since $\lim_{x\to\pm \infty} W(x) = \pm \infty$,
$H\circ W^{-1}$ belongs to $C_c(\RR)$, $C_0(\RR)$ as soon as $H$
belongs.  In the next three lemmata, we prove properties of the
operators $P_t^N$ and $P_t$ and some convergence results of $P_t^N$ to
$P_t$.

\begin{lemma}
\label{s06}
Fix a continuous function $H:\bb R\to\bb R$ with compact support.  For
every $t\ge 0$, $P_t^N H$ belongs to $L^1(\bb R)$ and
\begin{equation*}
\int _\RR du\, P_t^NH (u) \;=\; \frac{1}{N}\sum _{x\in \ZZ}
H (x/N)\;.
\end{equation*}
\end{lemma}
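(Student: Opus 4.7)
The plan is to exploit the fact that $Y_N(t\mid u)$, as a function of $u$, depends on $u$ only through the discretization $[u]_N$, and therefore $P^N_t H$ is constant on each interval of length $1/N$.

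First I would recall the identity $N^{-1} X_N(\cdot \mid N[u]_N) \sim Y_N(\cdot \mid u)$ established just before the lemma. Setting $x = \lceil uN\rceil$ so that $[u]_N = x/N$, and using the transition probabilities $p^N_t(x,y)$ defined in \eqref{h1}, this gives the explicit formula
\begin{equation*}
P^N_t H(u) \;=\; \bb E\bigl[H(Y_N(t\mid u))\bigr] \;=\; \sum_{y\in\bb Z} p^N_t(x,y)\, H(y/N)\;, \qquad x = \lceil uN\rceil\;.
\end{equation*}
The crucial observation is that the right hand side is constant on each interval $((x-1)/N, x/N]$, so $P^N_t H$ is a step function.

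Next I would integrate. Since $H$ has compact support, $\sum_y p^N_t(x,y)|H(y/N)|$ is bounded by $\|H\|_\infty$ times $\sum_y p^N_t(x,y)=1$, so $P^N_t H$ is bounded, and the same step computation applied to $|H|$ together with the symmetry $p^N_t(x,y) = p^N_t(y,x)$ and Tonelli will give a finite integral, establishing $P^N_t H \in L^1(\bb R)$. Concretely,
\begin{equation*}
\int_\RR P^N_t H(u)\, du \;=\; \sum_{x\in\bb Z} \frac{1}{N}\sum_{y\in\bb Z} p^N_t(x,y)\, H(y/N)\;=\; \frac{1}{N}\sum_{y\in\bb Z} H(y/N) \sum_{x\in\bb Z} p^N_t(y,x)\;,
\end{equation*}
where I used the symmetry to swap the role of $x$ and $y$ inside $p^N_t$, and Tonelli (or absolute convergence from the compact support of $H$) to exchange the sums. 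The inner sum equals $1$ because $X_N$ is a conservative Markov chain, yielding the claimed identity.

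There is no real obstacle here: the lemma is essentially a discrete Fubini argument that relies on three ingredients, namely the step-function structure of $P^N_t H$, the symmetry of $p^N_t$, and conservation of total probability. The only point that needs a brief justification is the absolute convergence needed to apply Tonelli, which is immediate from the compact support of $H$.
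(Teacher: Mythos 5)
Your proof is correct and follows essentially the same route as the paper: express $P^N_t H$ as the step function $u\mapsto \sum_y p^N_t(\lceil uN\rceil,y)H(y/N)$, integrate over each interval of length $1/N$, and use the symmetry of $p^N_t$ together with conservation of probability to collapse the double sum. You merely make explicit the step-function structure and the Tonelli justification, which the paper leaves implicit.
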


\begin{proof}
Assume without loss of generality that $H \geq 0$.  Since the
transition probability $p_t(x,y)$ is symmetric,
\begin{eqnarray*}
\!\!\!\!\!\!\!\!\!\!\!\!\!\! &&
\int _{\RR} du\,   P_t ^N H  (u) \;=\;
N^{-1} \sum _{x,y\in \ZZ } p^N_{t} (x , y) H (y/N) \\
\!\!\!\!\!\!\!\!\!\!\!\!\!\! && =\;
N^{-1} \sum _{y\in \ZZ} H (y/N) \sum _{x\in \ZZ}
p^N_t(y , x) \;=\;N^{-1} \sum _{y\in \ZZ} H (y/N)\;.
\end{eqnarray*}
This proves the identity and that $P_t^N H$ belongs to $L^1 (\bb R)$.
\end{proof}

\begin{lemma}
\label{ff}
Fix a function $H$ in $C_c (\RR)$ and $t\ge 0$.

\renewcommand{\theenumi}{\roman{enumi}}
\renewcommand{\labelenumi}{{\rm (\theenumi)}}

\begin{enumerate}
\item $P_t^N H$ converges in $L^1(\bb R)$ to $P_t H$.

\item If $H$ has bounded variation in $\bb R$ then, for every $t\ge
  0$, $P_t H$ has also bounded variation.

\item $P_t^N H$ also converges to $P_t H$ with respect to the counting
  measure:
\begin{equation*}
\lim_{N\to\infty} \frac 1N \sum_{x\in\bb Z} \big\vert (P_t^N H)(x/N) -
(P_t H)(x/N) \big\vert \;=\; 0\;.
\end{equation*}

\item For any $\e>0$ there exists $\Psi\in C_c(\RR)$ such that
  \begin{equation*}
\int_\RR du\, | P_t H (u) - \Psi(u) | \;\leq\; \e\; ,\quad
\frac{1}{N} \sum _{x\in \ZZ } | P_tH (x/N)  -\Psi (x/N) |
\;\leq\; \e
  \end{equation*}
for $N$ large enough.
\end{enumerate}
\end{lemma}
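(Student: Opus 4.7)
The plan is to establish the four parts of Lemma \ref{ff} in the order (i), (ii), (iii), (iv), each building on its predecessors.

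For (i), I would start from Lemma \ref{ufo}, which yields $Y_N(t|u)\to Y(t|u)$ almost surely for every $u$; since $H$ is bounded and continuous, dominated convergence gives the pointwise convergence $P_t^N H(u)\to P_t H(u)$. To promote this to $L^1$ I split $H=H_+-H_-$ with $H_\pm\in C_c(\RR)$ and work with the nonnegative parts separately. For $H\ge 0$, Lemma \ref{s06} identifies $\int P_t^N H\,du$ with the Riemann sum $N^{-1}\sum_x H(x/N)$, which converges to $\int H\,du$, while Corollary \ref{s05}(ii) gives $\int P_t H\,du=\int H\,du$. Thus the $L^1$-norms converge, and Scheff\'e's lemma upgrades pointwise convergence to convergence in $L^1(\RR)$.

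For (ii), I would show that $P_t$ maps bounded nondecreasing functions to bounded nondecreasing functions and then invoke the Jordan decomposition. The key tool is a monotone coupling for $Z$: whenever $x\le x'$ in $\overline{W(\RR)}$, one can realize $Z(\cdot|x)$ and $Z(\cdot|x')$ jointly so that $Z(t|x)\le Z(t|x')$ for all $t$, since the continuous paths cannot cross without first meeting and, after meeting, can be glued via the strong Markov property. Composing with the monotone $W$ and $W^{-1}$ transfers this ordering to $Y$, so $P_t f$ is nondecreasing whenever $f$ is bounded nondecreasing. Writing a compactly supported BV function as $H=H_1-H_2$ with $H_1,H_2$ bounded nondecreasing, $P_t H=P_t H_1-P_t H_2$ is a difference of bounded monotone functions and hence is of bounded variation.

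For (iii), the crucial observation is that $Y_N(t|u)$ depends on $u$ only through $[u]_N$, so $P_t^N H$ is piecewise constant, equal to $P_t^N H(x/N)$ on each interval $I_x=((x-1)/N,x/N]$. Integrating the triangle inequality $|P_t^N H(x/N)-P_t H(x/N)|\le |P_t^N H(u)-P_t H(u)|+|P_t H(u)-P_t H(x/N)|$ over $u\in I_x$ and summing yields
\[
\frac{1}{N}\sum_{x\in\ZZ}|P_t^N H(x/N)-P_t H(x/N)|\le \int_\RR |P_t^N H-P_t H|\,du + \frac{1}{N}\sum_{x\in\ZZ}\mathrm{osc}_{I_x}(P_t H).
\]
The first summand vanishes by (i), and the second is bounded by $|P_t H|_{\mathrm{TV}}/N$, which tends to zero by (ii).

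For (iv), since $P_t H\in L^1(\RR)$ by Corollary \ref{s05}(ii), I pick $\Psi\in C_c(\RR)$ with $\int|P_t H-\Psi|\,du\le\varepsilon/2$, which covers the first inequality. For the counting-measure inequality I write
\[
\frac{1}{N}\sum_x|P_t H(x/N)-\Psi(x/N)|\le \frac{1}{N}\sum_x|P_t H(x/N)-P_t^N H(x/N)|+\int|P_t^N H(u)-\Psi([u]_N)|\,du,
\]
again exploiting the piecewise constancy of $P_t^N H$. The first term tends to zero by (iii); the second is bounded by $\int|P_t^N H-P_t H|\,du+\int|P_t H-\Psi|\,du+\int|\Psi(u)-\Psi([u]_N)|\,du$, and each of these is small for $N$ large, the last one because $\Psi$ is uniformly continuous with compact support. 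The main obstacle in the entire argument is (ii): everything else is driven by the $L^1$-convergence from (i), the piecewise-constant structure of $P_t^N H$, and a Riemann-sum type BV estimate, whereas the monotonicity-preserving coupling is the one step that genuinely uses qualitative information about the diffusion $Z$ beyond what is already encoded in Lemma \ref{ufo}.
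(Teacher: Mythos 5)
Parts (i), (iii) and (iv) of your argument follow the paper's proof essentially verbatim: pointwise convergence from Lemma \ref{ufo} upgraded to $L^1$ by Scheff\'e via Lemma \ref{s06} and Corollary \ref{s05}(ii), and then triangle inequalities exploiting the fact that $P_t^NH$ is constant on each interval $((x-1)/N,x/N]$. Your oscillation bound $\sum_x \mathrm{osc}_{I_x}(P_tH)\le \|P_tH\|_{\mathrm{TV}}$ in (iii) is a slightly more quantitative packaging of the paper's dominated--convergence argument with the envelope $V(u)=V_{\lceil u\rceil}$, and your (iv) is the same three--term splitting the paper uses.

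The one place you genuinely depart from the paper is (ii), and it is also where your argument as written has a gap. You construct the monotone coupling directly for the limiting process, asserting that ``the continuous paths [of $Z$] cannot cross without first meeting'' and gluing at the meeting time by the strong Markov property. But $Z(t|u)=u+B(\psi^{-1}(t|u))$ is only c\`adl\`ag: it jumps across every gap $(W(x_j-),W(x_j))$ of its state space $\overline{W(\RR)}$, so the no-crossing-without-meeting principle does not apply as stated. The process with continuous paths is $Y$, but Proposition \ref{nsf} shows that $Y$ is \emph{not} strongly Markov, so the meet-and-glue construction cannot simply be transferred to $Y$ either. The ordering you want is still true --- a crossing jump of $Z$ would force both copies to occupy the same gap endpoint just before the jump, since $\overline{W(\RR)}$ contains no point strictly inside a gap --- but that requires an argument you have not supplied. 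The paper sidesteps all of this: it couples the two \emph{discrete} walks so that $Y_N(t|u)\le Y_N(t|v)$ for $u\le v$ (immediate for nearest-neighbour walks on $N^{-1}\ZZ$, which must meet before they can exchange order), concludes that $P_t^N$ maps bounded increasing functions to bounded increasing functions, and then takes the pointwise limit $P_t^NH_\pm\to P_tH_\pm$ already established in (i) to get monotonicity of $P_tH_\pm$ and hence the Jordan decomposition of $P_tH$. Either adopt that route or complete the gap-endpoint analysis for $Z$; as it stands, step (ii) rests on a property ($Z$ having continuous paths) that fails.
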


\begin{proof} 
Without loss of generality, fix a positive function $H$ in $C_c(\RR)$.
Since $P_t^N H$ converges pointwisely to $P_t H$, in view of Corollary
\ref{s05} (ii) and Lemma \ref{s06}, $P_t^N H$ converges in $L^1(\bb
R)$ to $P_t H$ by Scheff\'e Theorem.

It is easy to couple two copies of the random walk $Y_N$ in such a way
that
\begin{equation*}
Y_N(t|u) \le Y_N(t|v) \,, \qquad \forall t\ge 0\,, \qquad \forall
u\le v\;. 
\end{equation*}
To prove (ii), assume that $H$ is a continuous function of bounded
variation in $\bb R$. Then there exist bounded, continuous, increasing
functions, $H_-$ and $H_+$, such that $H=H_+ - H_-$. By the coupling,
$P_t^N H_\pm$ are bounded increasing functions. Taking the pointwise
limit as $N\uparrow \infty$ of the identity $P_t^N H= P_t^N H_+ - P_t
^N H_-$ we get that $P_t H= P_t H_+ - P_t H_-$ where $P_t H_\pm$ are
bounded increasing functions.  Therefore, $P_t H$ has bounded
variation.

For $N\ge 1$, and a right continuous function $f:\bb R\to \bb R$,
let $T_N f:\bb R\to \bb R$ be given by $(T_N f)(u) = f (\lceil uN
\rceil /N)$.  We claim that $T_N P_t H$ converges in $L^1(\bb R)$ to
$P_tH$. By Corollary \ref{s05} (i), $P_t H$ is right continuous. In
particular, $T_N P_t H$ converges pointwisely to $P_tH$. For $x$ in
$\bb Z$, denote by $V_x$ the total variation of $P_tH$ on $[x,x+1]$.
Let $V:\bb R\to\bb R_+$ be given by $V(u) = V_{\lceil u\rceil
}$. $V$ belongs to $L^1(\bb R)$ because $P_tH$ has bounded variation
due to (ii). Moreover, $T_N P_t H \le P_t H +
V$ which implies that $T_N P_t H$ belongs to $L^1(\bb R)$. By the
dominated convergence theorem, $T_N P_t H$ converges to $P_t H$ in
$L^1(\bb R)$ because $T_N P_t H$ converges pointwisely to $P_tH$.

The sum appearing in (iii) can be rewritten as
\begin{equation*}
\int_{\bb R}du\, \big\vert (P_t^N H)(u) -
(T_N P_t H)(u) \big\vert \;.
\end{equation*}
Since $P_t^N H$ and $T_N P_t H$ converge to $P_t H$ in $L^1(\bb R)$,
statement (iii) follows.

Fix $\varepsilon >0$.  By Corollary \ref{s05} (ii), $P_t H$ belongs to
$L^1(\bb R)$. In particular, there exists a continuous function with
compact support $\Psi$ which approximates $P_t H$ in $L^1(\bb R)$:
$\Vert P_tH - \Psi\Vert_1 \le \varepsilon$.  The sum in (iv) can be
estimated by $\Vert T_N P_tH - P_t H\Vert_1 + \Vert P_tH - \Psi
\Vert_1 + \Vert \Psi - T_N \Psi \Vert_1$. Since $\Psi$ belongs to
$C_c(\bb R)$ and since $T_N P_t H$ converges in $L^1(\bb R)$ to $P_tH$
the first and third term vanish as $N\uparrow\infty$. This proves
(iv).
\end{proof}

For $\lambda>0$, denote by $\{R_\lambda^N : \lambda >0\}$ the
resolvent associated to the semigroup $\{P^N_t : t\ge 0\}$:
\begin{equation*}
R_\lambda^N H  \;=\; \int_0^\infty dt\, e^{-\lambda t}  P_t^N H
\end{equation*}
for $H$ in $C_c(\bb R)$.

\begin{lemma}
\label{s07}
Fix a function $g$ in $C_c(\bb R)$. Then,
\begin{eqnarray*}
\!\!\!\!\!\!\!\!\!\!\!\!\! &&
\lim_{t\to 0} \limsup_{N\to\infty} \frac 1N \sum_{x\in\bb Z}
\big\vert P_t^N g (x/N) - g(x/N) \big\vert \;=\; 0\;, \\
\!\!\!\!\!\!\!\!\!\!\!\!\! && \quad
\lim_{\lambda\to\infty} \limsup_{N\to\infty}
\frac{1}{N} \sum_{x \in \bb Z} \big|\lambda R_\lambda^N g (x/N)
- g (x/N)\big| \;=\; 0 \;.
\end{eqnarray*}
Moreover, for every $\lambda>0$,
\begin{equation*}
\frac{1}{N} \sum_{x \in \bb Z}
\big| \lambda R_\lambda^N g (x/N) \big| \;\le\;
\frac{1}{N} \sum_{x \in \bb Z} \big| g (x/N) \big|\;.
\end{equation*}
\end{lemma}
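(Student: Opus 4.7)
My plan is to establish the three assertions in the order (iii), (i), (ii), since the proof of (iii) delivers a uniform bound needed also for (ii), while (ii) will follow from (i) by an Abelian-type argument.

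The contraction estimate (iii) should rest on the symmetry $p_t^N(x,y)=p_t^N(y,x)$ and the stochasticity $\sum_y p_t^N(x,y)=1$. I would bound $|\lambda R_\lambda^N g(x/N)|$ by $\lambda \int_0^\infty e^{-\lambda t} P_t^N|g|(x/N)\,dt$; by the same computation used in the proof of Lemma \ref{s06},
\begin{equation*}
\frac 1N \sum_x P_t^N |g|(x/N) \;=\; \frac 1N \sum_y |g|(y/N) \sum_x p_t^N(y,x) \;=\; \frac 1N \sum_y |g|(y/N),
\end{equation*}
and then the identity $\lambda \int_0^\infty e^{-\lambda t}\,dt = 1$ yields (iii).

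For (i), I would insert the intermediate term $P_t g(x/N)$. The first piece $N^{-1}\sum_x |P_t^N g(x/N) - P_t g(x/N)|$ vanishes as $N\to\infty$ by Lemma \ref{ff}(iii). For the second piece, fix $\varepsilon > 0$ and choose via Lemma \ref{ff}(iv) a $\Psi \in C_c(\RR)$ approximating $P_t g$ both in the Lebesgue norm and in the Riemann-sum norm up to $\varepsilon$. Then
\begin{equation*}
\frac 1N \sum_x |P_t g(x/N) - g(x/N)| \;\le\; \varepsilon \;+\; \frac 1N \sum_x |\Psi(x/N) - g(x/N)|,
\end{equation*}
and, since $\Psi - g \in C_c(\RR)$, the last Riemann sum converges to $\int|\Psi-g|\,du \le \varepsilon + \int |P_t g - g|\,du$ as $N\to\infty$. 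Corollary \ref{s05}(iii) makes the latter integral vanish as $t\downarrow 0$, and letting $\varepsilon \to 0$ concludes (i).

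Finally, for (ii) set $f_N(t) = N^{-1}\sum_x |P_t^N g(x/N) - g(x/N)|$. Since $\lambda \int_0^\infty e^{-\lambda t}\,dt = 1$, one has $\lambda R_\lambda^N g - g = \lambda \int_0^\infty e^{-\lambda t}(P_t^N g - g)\,dt$, and consequently $N^{-1}\sum_x|\lambda R_\lambda^N g(x/N) - g(x/N)| \le \int_0^\infty e^{-s} f_N(s/\lambda)\,ds$ after the substitution $s = \lambda t$. By (the proof of) (iii), $f_N$ is uniformly bounded by $2 N^{-1}\sum_x|g(x/N)|$, which provides an $s$-integrable domination. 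Reverse Fatou then gives
\begin{equation*}
\limsup_{N\to\infty} \int_0^\infty e^{-s} f_N(s/\lambda)\,ds \;\le\; \int_0^\infty e^{-s} \phi(s/\lambda)\,ds, \qquad \phi(u) := \limsup_N f_N(u).
\end{equation*}
By (i), $\phi(u)\to 0$ as $u\downarrow 0$; splitting the $s$-integral at $s = \lambda t_0$, with $t_0$ chosen so that $\sup_{0<u\le t_0}\phi(u) < \varepsilon$, bounds the inner piece by $\varepsilon$ while the tail decays like $e^{-\lambda t_0}$. Sending $\lambda\to\infty$ and then $\varepsilon\to 0$ completes (ii). The main subtle step is the reverse Fatou application, whose validity rests on the $L^1(e^{-s}ds)$-domination supplied by (iii).
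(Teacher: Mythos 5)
Your proof is correct and follows essentially the same route as the paper: the third claim via the resolvent formula plus the mass-conservation identity of Lemma \ref{s06}, the first claim by interpolating through $P_t g$ and invoking Lemma \ref{ff} and Corollary \ref{s05}(iii), and the second by dominating the resolvent by the time-integral of the first claim's quantity. If anything, your version is slightly more careful than the paper's, which leaves the discretization step (your $\Psi$-approximation via Lemma \ref{ff}(iv)) and the interchange of $\limsup_N$ with the $dt$-integral (your reverse Fatou plus splitting at $s=\lambda t_0$) implicit.
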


\begin{proof}
The first expression is bounded above by
\begin{equation*}
\Vert P_t^N g - P_t g \Vert_1 \;+\; \Vert P_t g - g \Vert_1
\;+\; \frac 1N \sum_{x\in\bb Z} \big\vert N \int_{x-1/N}^{x/N}
g (u) du \, -\,  g(x/N) \big\vert\;,
\end{equation*}
where $\Vert \cdot \Vert_1$ stands for the $L_1(\bb R)$ norm. By
Lemma \ref{ff} (i), the first expression vanishes as
$N\uparrow\infty$.  By Corollary \ref{s05} (iii), the second expression
vanishes as $t\downarrow 0$. Since the third expression vanishes as
$N\uparrow\infty$, the first claim of the lemma is proved.

By definition of the resolvent, the second expression is less than
or equal to
\begin{equation*}
\int_0^\infty dt\, \lambda e^{-\lambda t}
\frac 1N \sum_{x\in\bb Z}
\big\vert P_t^N g (x/N) - g(x/N) \big\vert \;.
\end{equation*}
By Lemma \ref{s06} the sum inside the integral is uniformly bounded in
$t$ and $N$. By the first part of this lemma it vanishes as
$N\uparrow\infty$, $t\downarrow 0$. This proves the second claim.

The third claim follows from the definition of the resolvent and Lemma
\ref{s06}.
\end{proof}

\section{Hydrodynamic behavior}
\label{sec3}

 We prove in this section the main Theorems of the article. We
first examine the convergence of the empirical measure $\pi^N$.

Recall that $\mc M$ stands for the space of Radon measures endowed
with the vague topology. Fix a realization of the subordinator $W$ and
$T>0$. For each probability measure $\mu$ on $\{0,1\}^{\bb Z}$, denote
by $\bb Q_\mu^{W,N}$ the measure on the path space $D([0,T], \mc M)$
induced by the measure $\mu$ and the process $\pi^N_t$, introduced in
\eqref{mis_emp_t}, evolving according to the generator
\eqref{generare} speeded up by $N^{1+1/\alpha}$.  Fix a continuous
profile $\rho_0 : \bb R\to [0,1]$ and consider a sequence $\{\mu_N :
N\ge 1\}$ of measures on $\{0,1\}^{\bb Z}$ associated to $\rho_0$. Let
$\bb Q_{W}$ be the probability measure on $D([0,T], \mc M)$
concentrated on the deterministic path $\pi(t,du) = \rho_W(t,u)du$,
where $\rho_W(t,u) = P_t \rho_0$.

\begin{proposition}
\label{s15}
The sequence of probability measures $\bb Q_{\mu_N}^{W,N}$ converges,
as $N\uparrow\infty$, to $\bb Q_{W}$.
\end{proposition}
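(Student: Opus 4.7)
The plan is to follow the classical three-step program for hydrodynamic limits: prove tightness of $\{\bb Q_{\mu_N}^{W,N}\}_{N\ge 1}$ in $D([0,T],\mc M)$, identify every weak limit point with the Dirac measure $\bb Q_W$, and conclude by uniqueness of the limit. For tightness, by Mitoma's criterion it suffices to show that, for each $H \in C_c(\bb R)$, the real-valued process $\{\<\pi^N_\cdot, H\>\}_{N\ge 1}$ is tight in $D([0,T],\bb R)$. The uniform bound is immediate from $\eta(x)\in\{0,1\}$ and $H$ having compact support. For the Aldous modulus-of-continuity estimate I would use the Dynkin decomposition
\begin{equation*}
\<\pi^N_t,H\>\;=\;\<\pi^N_0,H\>\;+\;\int_0^t\<\pi^N_s,\LL_N H\>\,ds\;+\;M^N_t(H),
\end{equation*}
where $\LL_N$ is the random walk generator of Section \ref{sec4}; after summation by parts both the integral term and the quadratic variation $\<M^N(H)\>_t$ reduce to discrete Dirichlet-form quantities of the shape $\tfrac{1}{N^2}\sum_x(H((x+1)/N)-H(x/N))^2/(W((x+1)/N)-W(x/N))$, which can be handled uniformly in $N$ by first establishing the identification for test functions in $\mf D_W$ (where $\LL_N H$ has a manageable limit) and extending to $C_c(\bb R)$ by density.

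The core of the identification is the following semigroup identity. Since, for symmetric exclusion, the one-point correlation obeys the Kolmogorov forward equation of the random walk with generator $\LL_N$, the function $\phi^N_s(x):=(P^N_{t-s}H)(x/N)$ solves the backward equation $\partial_s\phi^N_s+\LL_N\phi^N_s=0$ on $[0,t]$. A direct computation using the action of $\Ll_N$ on linear functionals of $\eta$ then shows that $s\mapsto\<\pi^N_s,\phi^N_s\>$ is a martingale, and taking expectations at $s=0$ and $s=t$ yields
\begin{equation*}
\bb E[\<\pi^N_t,H\>]\;=\;\bb E[\<\pi^N_0,P^N_tH\>]\;=\;\frac{1}{N}\sum_{x\in\bb Z}(P^N_tH)(x/N)\,\mu_N(\eta(x)=1).
\end{equation*}
To pass to the limit I plan to first replace $P^N_tH$ by $P_tH$ in the sum at vanishing cost by Lemma \ref{ff}(iii); then approximate $P_tH$ by some $\Psi\in C_c(\bb R)$ in both the counting-measure and $L^1(du)$ senses by Lemma \ref{ff}(iv); invoke the association of $\mu_N$ with $\rho_0$ to obtain $N^{-1}\sum_x\Psi(x/N)\,\mu_N(\eta(x)=1)\to\int\Psi(u)\rho_0(u)\,du$; and finally pass $\Psi\to P_tH$ in $L^1(\bb R)$, using Corollary \ref{s05}(iv) to identify the result as $\int H(u)(P_t\rho_0)(u)\,du=\int H(u)\rho_W(t,u)\,du$.

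The principal technical obstacle is to prove the variance bound $\mathrm{Var}[\<\pi^N_t,H\>]\to 0$, which upgrades the above convergence of expectations to convergence in probability. Using the martingale decomposition and the fact that $\<\pi^N_0,P^N_tH\>$ is $\mc F^N_0$-measurable, the variance splits as $\mathrm{Var}[\<\pi^N_0,P^N_tH\>]+\bb E[\<M^N\>_t]$, and the martingale contribution is controlled by the Dirichlet-form identity $\int_0^t \mc E_N(P^N_sH)\,ds\le \tfrac{1}{2}\|H\|_{\ell^2(N^{-1}\bb Z)}^2$, which yields a bound of order $N^{-1}$. For the initial-variance contribution, one exploits the fact that for symmetric exclusion the truncated two-point function $C_t(x,y)=\bb E[\eta_t(x)\eta_t(y)]-\bb E[\eta_t(x)]\bb E[\eta_t(y)]$ satisfies a closed two-particle exclusion equation and therefore remains controlled by $C_0$; the association assumption makes off-diagonal initial correlations vanish in the scaling limit, while the diagonal $\{x=y\}$ contributes only $O(N^{-1})$. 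Combining this with the convergence of expectations and the tightness gives $\<\pi^N_t,H\>\to\int H(u)\rho_W(t,u)\,du$ in probability for each $t\in[0,T]$, and the continuity in $t$ of the limit (from the strong continuity of $\{P_t\}$ in Theorem \ref{mt5}) upgrades this to convergence of $\bb Q_{\mu_N}^{W,N}$ to $\bb Q_W$ in $D([0,T],\mc M)$.
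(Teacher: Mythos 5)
Your identification step is, in substance, the paper's own argument: the representation $\eta_t(x)=\sum_y p^N_t(x,y)\eta_0(y)+(\text{martingale})$, the Dirichlet--form computation bounding the martingale contribution by $O(1/N)$ (this is exactly Lemma \ref{gaetano}), and the chain of replacements $P^N_tH\to P_tH\to\Psi\in C_c(\bb R)$ via Lemma \ref{ff} and Corollary \ref{s05}(iv). One remark: your treatment of $\mathrm{Var}[\<\pi^N_0,P^N_tH\>]$ through the closed two-particle correlation equation is both unnecessary and imprecise — the association hypothesis is a weak law of large numbers for $\<\pi^N_0,\cdot\>$ against $C_c$ test functions and does not assert that off-diagonal correlations $C_0(x,y)$ vanish. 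Since $\<\pi^N_0,P^N_tH\>$ is a functional of $\eta_0$ alone, the correct (and simpler) route is to replace $P^N_tH$ by a compactly supported continuous $\Psi$ at a cost controlled in the counting measure (Lemma \ref{ff}(iii)--(iv), using $|\eta|\le 1$) and then apply the association hypothesis directly; this is what the paper does and it closes that part without any correlation inequality.

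The genuine gap is in the tightness step. For $H\in C^1_c(\bb R)$ the Dynkin decomposition of $\<\pi^N_t,H\>$ does \emph{not} produce quantities that are bounded uniformly in $N$. The quadratic variation rate of $M^N_t(H)$ is bounded by $\frac 1N\sum_x\big(H(\frac{x+1}{N})-H(\frac xN)\big)^2\big/\big(W(\frac{x+1}{N})-W(\frac xN)\big)$, which for smooth compactly supported $H$ is of order $N^{1/\alpha-2}\,E[\gamma_0^{-1}]$ and therefore diverges for $\alpha<1/2$. Worse, the drift does not reduce to a Dirichlet form of $H$ at all: summation by parts gives $\<\pi^N_s,\LL_NH\>=\sum_x\frac{H(\frac{x+1}{N})-H(\frac xN)}{W(\frac{x+1}{N})-W(\frac xN)}\,\{\eta_s(x)-\eta_s(x+1)\}$, a cross term whose individual summands are typically of order $N^{1/\alpha-1}$ and whose signs, dictated by $\eta_s$, cannot be exploited; its crude bound is $O(N^{1/\alpha})$ for every $\alpha\in(0,1)$. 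This is precisely why the paper works with the auxiliary process $X^{\lambda,N}_t(g)=\<\pi^N_t,R^N_\lambda g\>$: the resolvent equation $\lambda g^N_\lambda-\LL_Ng^N_\lambda=g$ turns the drift into $\lambda\<\pi^N_s,g^N_\lambda\>-\<\pi^N_s,g\>$, controlled by the $\ell^1$ bounds of Lemma \ref{s07}, and simultaneously yields the a priori Dirichlet bound \eqref{silenzioso} that tames the quadratic variation; tightness of $\pi^N$ is then recovered by letting $\lambda\to\infty$ (Corollary \ref{s02}). Your proposed remedy — establish tightness for test functions in $\mf D_W$ and extend ``by density'' — gestures at this but does not close the gap: $\mf D_W$ contains no nonconstant continuous function, so the approximation of $H\in C_c(\bb R)$ must be by discontinuous elements with uniformly compact support, and, more importantly, you would still need a uniform-in-$N$ control of $\LL_N$ applied to the restriction to $\bb Z_N$ of a fixed element of $\mf D_W$ — a nontrivial discrete-approximation statement that your outline does not supply and for which the discrete resolvent estimates are exactly the substitute.
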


The proof of this result is divided in two parts. In Section
\ref{ss1}, we show that the sequence $\{\bb Q_{\mu_N}^{W,N} : N\ge
1\}$ is tight and in Section \ref{ss2} we characterize the limit
points of this sequence. 

\subsection{Tightness}
\label{ss1}

Recall that we denote by $\mc M$ the space of positive Radon measures
endowed with the vague topology. In particular, a sequence $\mu_n$ in
$\mc M$ converges to $\mu$ if
\begin{equation*}
\lim_{n\to\infty} \int f \, d\mu_n \; =\; \int f \, d\mu
\end{equation*}
for all continuous functions $f$ with compact support.

This topology can be defined through a metric. It is indeed not
difficult to find a countable family of functions $\{f_j : j\ge 1\}$
in $C_c(\bb R)$ (even in $C^\infty _c (\bb R)$) such that
\begin{enumerate}
\item For each $\epsilon >0$, integer $k\geq 1$ and continuous
  function $f$ with support contained in $[-k,k]$, there exists $j$
  such that $\Vert f_j - f\Vert_\infty \le \epsilon$ and such that the
  support of $f_j$ is contained in $[-k-1,k+1]$.
  
\item For each integer $k\ge 1$, there exists $j$ such that $f_j$ is a
  non-negative function with support contained in $[-k-1, k+1]$ and
  greater or equal to $1$ in $[-k,k]$.
\end{enumerate}

It is easy to check that $\mu_n$ converges to $\mu$ vaguely if and
only if $\int f_j d\mu_n$ converges to $\int f_j d\mu$ for all $j$. In
particular, $\mu_n$ converges to $\mu$ if and only if $d(\mu_n,\mu)$
vanishes, where $d$ is the metric defined by
\begin{equation}
\label{g2}
d(\mu, \mu') \;=\; \sum_{j\ge 1} \frac 1{2^j} \Big\{ 1 \wedge
\Big\vert \int f_j d\mu - \int f_j d\mu' \Big\vert \, \Big \}\;.
\end{equation}
The space $\mc M$ endowed with this metric is a complete separable
metric space.

The closure of a subset $M$ of $\mc M$ is compact if and only if
\begin{equation*}
\sup_{\mu\in M} \mu (K) \;<\; \infty
\end{equation*}
for all compact sets $K$ of $\bb R$. In particular,  if $g_j$ is a
non-negative smooth function with support contained in $[-j-1, j+1]$
and greater or equal to $1$ in $[-j,j]$, a set $M$ such that
\begin{equation*}
  \sup_{\mu\in M} \sum_{j\ge 1} \frac 1{2^j} \int g_j d \mu  \;<\; \infty
\end{equation*}
is compact.
We refer to Section A.10 of \cite{sep} for the proofs of all the
previous statements concerning the vague topology in $\mc M$.
\smallskip

We prove in this subsection that the sequence of probability measures
$\{\bb Q_{\mu_N}^{W,N} : N\ge 1\}$ is tight in $D([0,T], \mc M)$. The
method \cite{jl2} consists in proving relative compactness of an
auxiliary process and then showing that both processes are close.

For $\lambda>0$, let $\{X^{\lambda,N}_t : t\ge 0\}$ be the $\mc
M$-valued Markov process such that
\begin{equation*}
X^{\lambda,N}_t (H) \;=\; \< \pi^N_t, R_\lambda^N H\>\;=\;
\frac{1}{N} \sum _{x\in \bb Z} \bigl (R_\lambda^N H \bigr)(x/N)
\eta_t (x)
\end{equation*}
for every $H$ in $C_c(\bb R)$, where $\{R_\lambda^N : \lambda >0\}$ is
the resolvent associated to the semigroup $\{P^N_t : t\ge 0\}$.  Here
and below, we do not distinguish between a continuous function $H: \bb
R\to \bb R$ and its restriction to $\bb Z_N = \{x/N : x\in\bb Z\}$.

\begin{lemma}
\label{s01}
For each $T>0$ and $\lambda>0$, the sequence of processes
$\{X^{\lambda,N} : N\ge 1\}$ is tight in $D([0,T], \mc M)$.
\end{lemma}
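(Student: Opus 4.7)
The plan is to apply a standard criterion for tightness of $\mc M$-valued c\`adl\`ag processes: since the vague topology on $\mc M$ is metrized via the countable family $\{f_j\}\subset C_c(\RR)$ entering \eqref{g2}, it suffices to establish (a) compact containment of trajectories, i.e.\ for each $\varepsilon>0$ a compact $\mc K\subset\mc M$ with $\PP[X^{\lambda,N}_t\in \mc K\text{ for all }t\in[0,T]]\ge 1-\varepsilon$ uniformly in $N$, and (b) tightness of each real projection $\<X^{\lambda,N}_\cdot, f_j\>$ in $D([0,T],\RR)$.

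Step (a) is handled directly by the last bound in Lemma \ref{s07}: for non-negative $f_j$,
\begin{equation*}
\<X^{\lambda,N}_t, f_j\> \;=\; \frac{1}{N}\sum_{x\in\ZZ} (R_\lambda^N f_j)(x/N)\,\eta_t(x) \;\le\; \frac{1}{\lambda}\cdot\frac{1}{N}\sum_{x\in\ZZ} f_j(x/N)\;,
\end{equation*}
and the right hand side is bounded uniformly in $N$ and $t$ since $f_j$ has compact support. This yields trajectorywise, deterministic compact containment.

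For (b), I would use Aldous--Rebolledo on $\<X^{\lambda,N}_\cdot, f\>$ (writing $f=f_j$). The key point is that, by the self-duality of symmetric exclusion together with the resolvent identity $\Ll^{Y_N} R_\lambda^N f = \lambda R_\lambda^N f - f$, the Dynkin decomposition takes the closed form
\begin{equation*}
\<X^{\lambda,N}_t, f\> \;=\; \<X^{\lambda,N}_0, f\> \;+\; \int_0^t \<\pi^N_s,\, \lambda R_\lambda^N f - f\>\, ds \;+\; M^{\lambda,N}_t(f)\;,
\end{equation*}
where the duality is $N^{1+1/\alpha}\,\Ll_N \<\pi^N,\phi\> = \<\pi^N, \Ll^{Y_N}\phi\>$ and $\Ll^{Y_N}$ is the generator of the rescaled walk $Y_N$. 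Lemma \ref{s07} bounds the integrand of the drift in $\ell^1$ uniformly in $N$, which controls the bounded-variation part of Aldous. For the martingale part, the carr\'e du champ satisfies
\begin{equation*}
\Gamma(\Phi_{R_\lambda^N f})(\eta) \;\le\; \frac{1}{N}\,\mathcal{E}_N(R_\lambda^N f)\;,
\end{equation*}
where $\mathcal{E}_N$ is the Dirichlet form of $Y_N$. Integration by parts against $\Ll^{Y_N}$ gives $\mathcal{E}_N(R_\lambda^N f)=\frac{1}{N}\sum_x R_\lambda^N f(x/N)\,(f-\lambda R_\lambda^N f)(x/N)$, again uniformly bounded in $N$ by Lemma \ref{s07}. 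Hence $\<M^{\lambda,N}(f)\>_T = O(1/N)$, and Doob's inequality delivers the second part of Aldous.

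The main obstacle is precisely this martingale estimate. Because the conductances $c_x$ have no finite mean, the Dirichlet form of an arbitrary test function in $C_c(\RR)$ with respect to $\Ll^{Y_N}$ cannot be uniformly bounded in $N$. This is exactly why the auxiliary process $X^{\lambda,N}$ with the resolvent smoothing was introduced: $R_\lambda^N f$ lies in the domain of $\Ll^{Y_N}$ and its Dirichlet form is expressible using only $f$ and $R_\lambda^N f$, both of which satisfy the $\ell^1$ bounds of Lemma \ref{s07}, so the pathological fluctuations of the $c_x$ never enter the final estimate.
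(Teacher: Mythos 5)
Your proposal is correct and follows essentially the same route as the paper: reduce to the real-valued projections $\<X^{\lambda,N}_\cdot, f\>$, use the resolvent identity $\lambda R^N_\lambda f - \bb L_N R^N_\lambda f = f$ to get a uniform-in-$N$ bound on the Dirichlet form of $R^N_\lambda f$, bound the drift via Lemma \ref{s07} and the quadratic variation by $O(1/N)$, and conclude by Aldous/Doob together with the deterministic compact containment coming from the last estimate of Lemma \ref{s07}. The only cosmetic difference is that the paper invokes Proposition IV.1.7 of Kipnis--Landim for the reduction to projections rather than arguing directly from the metric \eqref{g2}.
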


\begin{proof}
By Proposition IV.1.7 in \cite{kl}, it is enough to show that
$\{X^{\lambda,N}_t (g) : 0\le t\le T\}$ is tight in $D([0,T], \bb R)$
for all functions $g$ in $C^1_c(\bb R)$.  Note that the underlying
space in \cite{kl} is compact, while we are working here on $\bb R$.
However, in both cases the topology is given by a metric of type
\eqref{g2} and in both cases the compacts are characterized by
integral of functions. It is easy to adapt the proof of Proposition
IV.1.7 to the present case.

Fix a function $g$ in $C_c^1(\bb R)$ and let $g_\lambda^N =
R_\lambda^N g$.  Note that $g_\lambda^N$ belongs to $\ell^2 (\bb
Z_N)$, the space of square summable functions $f:\bb Z_N\to\bb R$,
because so does $g$. Since $g_\lambda^N$ is the solution of
\begin{equation}
\label{f01}
\lambda g_\lambda^N \;-\; \bb L_N g_\lambda^N \;=\; g\;.
\end{equation}
There is here a slight abuse of notation. We are using the same symbol
$\bb L_N$ for the generator of $X_N(t)$ and the generator of $N^{-1}
X_N(t)$. The context makes clear to which operator we are refering to.
Multiplying \eqref{f01} by $N^{-1} g_\lambda^N(\cdot /N)$ and summing
over $x$, we obtain that
\[
\frac{\lambda}{N} \sum_{x \in \bb Z} g_\lambda^N(x/N)^2
\;+\; \frac{N^{1/\alpha}}{N^2} \sum_{x \in \bb Z} c_x
(\nabla_N g_\lambda^N)(x/N)^2
\;=\; \frac{1}{N} \sum_{x \in \bb Z} g_\lambda^N(x/N) g(x/N),
\]
where $\nabla_N$ stands for the discrete gradient: $(\nabla_N h)(x/N)
= N\{h(x+1/N) - h(x/N)\}$. By Schwarz inequality, we obtain that
\begin{equation*}
\frac{\lambda}{2 N} \sum_{x \in \bb Z} g_\lambda^N(x/N)^2 \;+\;
\frac{N^{1/\alpha}}{N^2} \sum_{x \in \bb Z} c_x (\nabla_N
g_\lambda^N)(x/N)^2 \;\le\; \frac{1}{2\lambda N} \sum_{x \in \bb Z}
g(x/N)^2 \;.
\end{equation*}
Since $g$ is continuous with compact support, $N^{-1} \sum_{x \in \bb
  Z} g(x/N)^2$ is bounded uniformly over $N$.  Hence,
\begin{equation}
\label{silenzioso}
\sup _N \frac{N^{1/\alpha}}{N^2}\sum_{x \in \bb Z} c_x (\nabla_N
g_\lambda^N)(x/N)^2 \leq \frac{C(g) }{\l }\;.
\end{equation}
 In this formula and below, $C(g)$ stands for some finite
constant depending only on $g$.

An elementary computation shows that $\mc L_N \<\pi^N_t, H\> =
\<\pi^N_t, \bb L_N H\>$,  where  $\mc L_N$ is the operator defined
in (\ref{generare}). In particular,  the process $M_t^{N,\lambda}$
defined by
\begin{equation}
\label{f02}
M_t^{N,\lambda} \;=\; X^{\lambda,N}_t (g) \;-\; X^{\lambda,N}_0 (g)
\;-\; \int_0^t ds\, \big\{ \lambda \< \pi_s^N , g_\lambda^N\>
- \< \pi_s^N , g\> \big\}
\end{equation}
is a martingale with quadratic variation
\begin{equation*}
\<M^{N,\lambda}\>_t \;=\; \int_0^t ds\, \frac{N^{1/\alpha}}{N^3}
\sum_{x \in \bb Z}   c_x (\nabla_N g_\lambda^N)(x/N)^2
\{\eta_s(x+1) -\eta_s(x)\}^2 \;.
\end{equation*}
Due to  the previous estimate (\ref{silenzioso}), the quadratic
variation $\<M^{N,\lambda}\>_t$ satisfies
\begin{equation}
\label{artemisia}
\<M^{N,\lambda}\>_t \le C(g) T /\lambda N\;, \qquad \forall \,0\leq
t \leq T\;.
\end{equation}

Moreover, by Lemma \ref{s07},
\begin{equation} 
\label{rasputin}
\sup_{N\ge 1} \frac{\lambda }{N}
\sum_{x \in \bb Z} \big| g_\lambda^N (x/N) \big| \;\le \; C(g)\;.
\end{equation}
Hence, given $N\geq 1$ and constants $0\leq a<b$,
\begin{equation}
\label{tardi}
\Big | \int_a^b  ds\, \big\{ \lambda \< \pi_s^N , g_\lambda^N\>
- \< \pi_s^N , g\> \big\}\, \Big| \;\leq\; C(g) (b-a)\;.
\end{equation}

\smallskip 
Due to the decomposition (\ref{f02}), to the estimates
(\ref{artemisia}) and (\ref{tardi}), and to Doob inequality, it is
simple to prove (cf. \cite{kl} Chap. IV, p. 55) that, given $\e>0$,
\begin{equation}
\label{translucenza}
 \lim _{\g\downarrow 0} \limsup _{N\uparrow \infty} \sup
_{\t, \theta} \PP _{\mu^N} \left(  \bigl|X^{\lambda,N}_\t(g)-
X^{\lambda,N}_{\t+\theta}(g) \bigr|>\e \right) \;=\;0\;,
\end{equation}
 where the supremum $\sup _{\t, \theta} $ is over all
stopping times $\t$ bounded by $T$, and over $\theta$ with $0\leq
\theta \leq \gamma$.

 In addition, due to (\ref{rasputin}), 
\begin{equation}
\label{ras1}
\sup _{t\geq 0} \sup_{N\geq 1} \bigl| X_t^{\l,N}(g)\bigr|
\;\le\; \frac{C(g)}\lambda \;\cdot
\end{equation}
As discussed in \cite{kl}, Chap. IV, p. 51, (\ref{translucenza}) and
(\ref{ras1}) allow to apply Prohorov's theorem, thus concluding the
proof of the tightness of $X^{\lambda,N}_t (g)$.
\end{proof}

\begin{corollary}
\label{s02}

The sequence of measures $\{\bb Q_{\mu_N}^{W,N} : N\ge 1\}$ is tight.
\end{corollary}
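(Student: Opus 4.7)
\textbf{Proof plan for Corollary \ref{s02}.} The strategy is to lift the tightness of the resolvent-smoothed process established in Lemma \ref{s01} to tightness of $\pi^N_\cdot$ in $D([0,T],\mc M)$, using the approximation of the identity provided by Lemma \ref{s07}.

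First I would argue that it suffices to establish tightness of the real-valued coordinate processes $\{\langle \pi^N_t, f_j\rangle : 0 \le t \le T\}$ in $D([0,T],\bb R)$, for each function $f_j$ in the countable family introduced before \eqref{g2}, together with a compact-containment condition in $\mc M$. The latter is essentially automatic: since $\eta_t(x) \in \{0,1\}$ and each majorant $g_j$ has support in $[-j-1,j+1]$,
\begin{equation*}
\sup_{0 \le t \le T} \langle \pi^N_t, g_j\rangle \;\le\; \frac{1}{N} \sum_{|x| \le N(j+1)} g_j(x/N) \;\le\; C_j\;,
\end{equation*}
uniformly in $N$, so the paths of $\pi^N$ remain almost surely in a fixed compact subset of $\mc M$. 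The reduction to coordinate tightness then follows by the same adaptation of Proposition IV.1.7 of \cite{kl} already used in the proof of Lemma \ref{s01}.

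For each $H$ in the countable family (so $H \in C_c(\bb R)$) and each $\lambda > 0$, I would use the decomposition
\begin{equation*}
\langle \pi^N_t, H\rangle \;=\; \lambda X^{\lambda,N}_t(H) \;+\; \langle \pi^N_t, H - \lambda R^N_\lambda H\rangle\;.
\end{equation*}
Because $0 \le \eta_t(x) \le 1$, the remainder is controlled by a deterministic quantity,
\begin{equation*}
\sup_{0\le t\le T} \big|\langle \pi^N_t, H - \lambda R^N_\lambda H\rangle\big| \;\le\; \frac{1}{N} \sum_{x \in \bb Z} \big|H(x/N) - \lambda R^N_\lambda H(x/N)\big|\;,
\end{equation*}
and by the second assertion of Lemma \ref{s07} this tends to zero as first $N \to \infty$ and then $\lambda \to \infty$. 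On the other hand, Lemma \ref{s01} gives that, for each fixed $\lambda$, the process $\{\lambda X^{\lambda,N}_t(H) : 0 \le t \le T\}$ is tight in $D([0,T],\bb R)$.

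Fixing $\varepsilon > 0$, I would choose $\lambda$ large enough and $N_0$ large enough so that the deterministic remainder above is at most $\varepsilon/4$ for $N \ge N_0$, and then invoke Aldous' criterion for $\lambda X^{\lambda,N}_\cdot(H)$ (which was essentially verified in the course of proving Lemma \ref{s01}) to handle the oscillations of the main term. Together with the uniform bound $|\langle \pi^N_t, H\rangle| \le N^{-1} \sum_x |H(x/N)| \le C(H)$, this yields tightness of the coordinate processes, hence of $\bb Q^{W,N}_{\mu_N}$. The only non-routine point is ensuring that the approximation error can be made small uniformly in $t$ and in $N$; this is exactly the content of Lemma \ref{s07}, so no genuine obstacle remains beyond careful bookkeeping.
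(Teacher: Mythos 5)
Your proposal is correct and follows essentially the same route as the paper: decompose $\<\pi^N_t,g\>$ into $\lambda X^{\lambda,N}_t(g)$ plus a remainder bounded deterministically (using $0\le\eta_t(x)\le 1$) by $N^{-1}\sum_x|\lambda R^N_\lambda g(x/N)-g(x/N)|$, control the remainder via Lemma \ref{s07}, and invoke the tightness of $X^{\lambda,N}$ from Lemma \ref{s01}. The extra details you supply on compact containment and the reduction to coordinate processes are consistent with what the paper implicitly relies on.
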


\begin{proof}
It is enough to show that for every function $g$ in $C^1_c(\bb R)$
and every $\epsilon>0$, there exists $\lambda>0$ such that
\begin{equation*}
\lim_{N\to\infty} \bb P^{W,N}_{\mu^N} \Big[
\sup_{0\le t\le T} |X^{\lambda,N}_t (\lambda g) - 
\<\pi^N_t, g\>| > \epsilon
\Big] \;=\;0
\end{equation*}
because in this case the tightness of $\pi^N_t$ follows from the
tightness of $X^{\lambda,N}_t$.  Since there is at most one particle
per site the expression inside the absolute value is less than or
equal to
\begin{equation*}
\frac{1}{N} \sum_{x \in \bb Z} \big|\lambda g_\lambda^N (x/N)
- g (x/N)\big|\;.
\end{equation*}
By Lemma \ref{s07} this expression vanishes as $N\uparrow\infty$,
$\lambda\uparrow\infty$.
\end{proof}

\subsection{Proof of Theorem \ref{mt1} }
\label{ss2}

We start this subsection with a generalization of \cite{n}, \cite{F}.

\begin{lemma}
\label{gaetano}
Fix a function $H$ in $C_c(\bb R)$ and a sequence of probability
measures $\{\mu_N : N\ge 1\}$ in $\{0, 1\}^{\bb Z}$. For each $t\ge
0$,
\begin{equation*}
\lim_{N\to\infty} \bb E_{\mu_N}^{W,N} \Big[ \Big\{ \frac{1}{N}
\sum_{x\in \ZZ} H(x/N) \, \eta_t (x) \;-\; \frac{1}{N} \sum_{x\in
\ZZ} (P_t^N H) (x/N)\, \eta_0(x) \Big\}^2 \Big] \;=\; 0\;,
\end{equation*}
where  $\bb E_{\mu_N}^{W,N}$ denotes the expectation w.r.t. $\bb
P_{\mu_N} ^{W,N}$.
\end{lemma}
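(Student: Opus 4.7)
The plan is to use Dynkin's martingale associated to a time-dependent test function built from the random walk semigroup, exploiting the fact that for the symmetric exclusion dynamics the expected evolution of $\eta_t(x)$ is governed by the dual one-particle random walk.

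Concretely, I would fix $t>0$ and introduce the time-dependent function $\psi_s(x/N) = (P^N_{t-s} H)(x/N)$ for $0 \le s \le t$. Since $\{P^N_r : r \ge 0\}$ is the semigroup associated to the generator $\bb L_N$ (acting on functions on $\bb Z_N = N^{-1}\bb Z$), one has the backward equation $\partial_s \psi_s = -\bb L_N \psi_s$. Applying Dynkin's formula to $s\mapsto \<\pi_s^N, \psi_s\>$ under the exclusion dynamics speeded up by $N^{1+1/\alpha}$, and using the standard identity $\mathcal L_N \<\pi^N, \phi\> = \<\pi^N, \bb L_N \phi\>$ (valid for the symmetric exclusion generator \eqref{generare}), the bounded-variation part telescopes to zero. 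Thus
\begin{equation*}
M_t^N \;=\; \<\pi_t^N, H\> \;-\; \<\pi_0^N, P^N_t H\>
\end{equation*}
is a mean-zero $\bb P_{\mu_N}^{W,N}$-martingale, and the $L^2$ statement of the lemma reduces to showing $\bb E_{\mu_N}^{W,N}[(M_t^N)^2] \to 0$.

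The next step is to estimate the quadratic variation. A direct computation as in \eqref{f02} yields
\begin{equation*}
\<M^N\>_t \;=\; \int_0^t ds\, \frac{N^{1/\alpha}}{N^3} \sum_{x \in \bb Z} c_x (\nabla_N \psi_s)(x/N)^2 \{\eta_s(x+1) - \eta_s(x)\}^2\;,
\end{equation*}
where $(\nabla_N f)(x/N) = N\{f((x+1)/N) - f(x/N)\}$. Bounding $\{\eta_s(x+1) - \eta_s(x)\}^2 \le 1$, it suffices to estimate $\frac{1}{N}\int_0^t \frac{N^{1/\alpha}}{N^2} \sum_x c_x (\nabla_N \psi_s)^2\, ds$. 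For this I would invoke the classical energy identity: the symmetry of $\bb L_N$ with respect to the counting measure on $\bb Z_N$ gives
\begin{equation*}
\frac{d}{ds}\, \frac{1}{2N}\sum_{x\in\bb Z}(P^N_s H)(x/N)^2 \;=\; -\, \frac{N^{1/\alpha}}{N^2}\sum_{x\in\bb Z} c_x (\nabla_N P^N_s H)(x/N)^2\;,
\end{equation*}
and integrating from $0$ to $t$ (after a change of variable $s \mapsto t-s$) gives
\begin{equation*}
\int_0^t ds\, \frac{N^{1/\alpha}}{N^2}\sum_{x\in\bb Z} c_x (\nabla_N \psi_s)(x/N)^2 \;\le\; \frac{1}{2N}\sum_{x\in\bb Z} H(x/N)^2 \;\le\; C(H)\;,
\end{equation*}
with $C(H)$ finite and independent of $N$ since $H$ has compact support. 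Therefore $\bb E_{\mu_N}^{W,N}[\<M^N\>_t] \le C(H)/N$.

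The conclusion follows by Doob's $L^2$ inequality applied to $M^N$. The only genuine subtlety, and the step most likely to require care, is justifying the identity $\mathcal L_N \<\pi^N,\psi\> = \<\pi^N, \bb L_N \psi\>$ and the application of Dynkin's formula to the unbounded, infinite-volume observable $\<\pi_s^N, \psi_s\>$: this must be handled by a standard cutoff argument, first replacing $\psi_s$ by a truncation with finite support, applying Dynkin's formula there, and then passing to the limit using the energy bound above to control the tails uniformly. Everything else reduces to the symmetry of $\bb L_N$ and the integration by parts on $\bb Z_N$.
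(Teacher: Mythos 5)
Your proposal is correct and follows essentially the same route as the paper: you arrive at the same martingale $\<\pi^N_t,H\>-\<\pi^N_0,P^N_tH\>$ (the paper obtains it via the Duhamel representation of $\eta_t(x)$ through the current martingales $M^{x,x+1}_t$, you via Dynkin's formula with the backward-evolved test function $P^N_{t-s}H$), the same quadratic variation, and the same Dirichlet-form/energy identity giving the $C(H)/N$ bound. The infinite-volume subtlety you flag is handled in the paper by first assuming finitely many particles $\mu_N$-a.s.\ and referring to \cite{F} for the extension, which matches your proposed cutoff.
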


\begin{proof}
We assume that the number of particles is finite $\mu_N$--a.s. One can
extend the proof to the general case by the same arguments used in
\cite{F}.  For each $x$ in $\bb Z$, denote by $J^{x,x+1}_t$ the net
current of particles through the bond $\{x,x+1\}$ in the time interval
$[0,t]$.  This is the total number of particles which jumped from $x$
to $x+1$ minus the total number of particles which jumped from $x+1$
to $x$ in the time interval $[0,t]$. Denote by $M^{x,x+1}_t$ the
martingale associated to $J^{x,x+1}_t$: $M^{x,x+1}_t = J^{x,x+1}_t -
N^{1+ 1/\alpha} c_x \int_0^t \{ \eta_s(x) - \eta_s(x+1) \} \, ds$ and
let $M_t^x = M^{x-1,x}_t - M^{x,x+1}_t$.

With this notation,
\begin{equation*}
\eta_t(x) \;=\; \sum_{y\in\bb Z} p^N_t(x,y) \eta_0(y) \;+\;
\sum_{y\in\bb Z} \int_0^t p^N_{t-s}(x,y) \, dM^y_s \;,
\end{equation*}
 where $p^N_t(\cdot, \cdot)$ is the transition probability defined
in \eqref{h1}.
In particular, since $p^N_t$ is symmetric, the expression inside the
square in the statement of the proposition can be rewritten as
\begin{equation*}
\Gamma^N_t \;=\;
\frac 1N \sum_{y\in\bb Z} \int_0^t (P_{t-s}^N H) (y/N) \, dM^y_s\;.
\end{equation*}
To prove the proposition it is therefore enough to show that $\bb
E_{\mu_N}^{W,N} [(\Gamma^N_t)^2]$ vanishes as $N\uparrow\infty$. Since
the martingales $M^{z,z+1}_t$ are orthogonals, $\bb E_{\mu_N}^{W,N}
[\Gamma_t^2]$ is equal to
\begin{equation*}
\frac {N^{1/\alpha}}N
\sum_{y\in\bb Z} \int_0^t ds\, \Big\{ (P_{t-s}^N H) ((y+1)/N) -
(P_{t-s}^N H) (y/N) \Big\}^2 c_{y} \, \bb E_{\mu_N}^{W,N} \big[ a_{y,
y+1} (\eta_s)\big] \;,
\end{equation*}
where $a_{z,z+1}(\eta) = \{\eta(z) - \eta(z+1)\}^2$. Since this
function is bounded by $1$, the previous expression is less than or
equal to
\begin{equation*}\label{uffina}
\frac {N^{1/\alpha}}N \sum_{y\in\bb Z} \int_0^t ds\, c_{y} \Big\{
(P_{s}^N H) ((y+1)/N) - (P_{s}^N H) (y/N) \Big\}^2 \;.
\end{equation*}
Let $H_t(x/N) = (P_{t}^N H) (x/N)$ and observe that $\partial_t H_t
= \bb L_N H_t $. Hence the previous expression can be rewritten as
\begin{multline*}
-\frac{1}{N^2}  \int _0 ^t (H_s, \bb L_N H_s)_Nds\,=\,-\frac{1}{2
N^2} \int _0 ^t \partial _s (H_s,
H_s)_N ds\,= \\
 \frac{1}{2 N^2}    \sum_{y\in\bb Z}  H^2 (y/N) \;-\;
\frac{1}{2 N^2}
  \sum_{y\in\bb Z} \big\{ (P_{t}^N H) (y/N) \big\}^2\;,
\end{multline*}
where $(\cdot,\cdot)_N$ denotes the scalar product on $\ell^2(\bb Z_N)
$ w.r.t. the counting measure. This proves the lemma.
\end{proof}

We are now in a position to show that the sequence of probability
measures $\bb Q_{\mu_N}^{W,N}$ converges, as $N\uparrow\infty$, to
$\bb Q_{W}$.

\smallskip
\noindent{\bf Proof of Proposition \ref{s15}}.
By Corollary \ref{s02}, the sequence $\{\bb Q_{\mu_N}^{W,N} : N\ge
1\}$ is tight. To prove the lemma we only need to characterize the
limit points of this sequence.

Fix a function $H$ in $C_c(\bb R)$. We claim that
\begin{equation}
\label{fisso}
\lim _{N \to \infty} \bb P^{W,N}_{\mu_N} \Big[ \,
\Big | \<\pi^N_t , H\> - \int _\RR H(u) \rho_W (t,u) \, du \Big | \;
> \; \delta \, \Big] \;=\; 0
\end{equation}
for all $0\le t\le T$, $\d>0$.

By Lemma \ref{gaetano} we only need to prove that
\begin{equation*}
\lim _{N\uparrow\infty} \mu_N \Big ( \Big |
\frac{1}{N} \sum _{x \in \ZZ} P_t^N H (x/N) \eta(x)  -
\int H(u) \rho_W (t,u) du \Big| > \d \Big) \;=\; 0\;.
\end{equation*}
Since there is at most one particle per site, by Lemma \ref{ff} (iii),
we may replace $P_t^N H$ by $P_t H$. By assumption on $\mu_N$ and by
Lemma \ref{ff} (iv), we may also replace $N^{-1} \sum_{x\in\bb Z}
(P_t H)(x/N) \eta (x)$ by $\int (P_t H)(u) \rho_0(u) du$. Since
$\rho_0$ is continuous and bounded, by Corollary \ref{s05} (iv), this
expression is equal to
\begin{equation*}
\int H (u) (P_t \rho_0) (u) du \;=\; \int H (u) \rho_W(t,u) \, du\;.
\end{equation*}
This concludes the proof of (\ref{fisso}).

By \eqref{fisso}, the finite dimensional distributions (f.d.d.) of
$\bb Q_{\mu_N}^{W,N}$ converge to the f.d.d. of $\bb Q_{W}$.  Since the
f.d.d. characterize the measure, the proposition is proved. \qed
\smallskip 

\noindent{\bf Proof of Theorem \ref{mt1}.} 
Fix a function $H$ in $C_c(\bb R)$. On the one hand, by Theorem
\ref{mt5} and Corollary \ref{s05} (iv), $\int _\RR H(u) \rho_W (t,u)
\, du$ is a bounded continuous function of time. On the other hand,
for any continuous function $A: [0, T] \to \bb R$, the map in
$D([0,T], \mc M)$
\begin{equation*} 
\pi \;\to \; \sup_{0\le t\le T} \Big | \<\pi_t , H\> - A(t) \Big |
\end{equation*}
is bounded and continuous for the Skorohod topology. In particular,
Theorem \ref{mt1} follows from Proposition \ref{s15}.  \qed

\subsection{Proof of Theorem \ref{mt2}}

We first claim that
\begin{equation}
\label{c5}
\lim_{N\to\infty} \sup_{0\le t\le T}  \Big | \int _\RR H(u) \Big\{
P_t \rho_0 (u) - P^N_t \rho_0 (u) \Big\} \, du \Big |
\;=\;0
\end{equation}
for every function $H$ in $C_c(\bb R)$. Fix $\varepsilon >0$ and
recall that we denote by $(\bb X, \bb F, \bb P)$ the probability
space in which the processes $Y(t|u)$ and $Y_N(t|u)$ are defined.
Expectation with respect to $\bb P$ is denoted by $\bb E $. If $K$
stands for a compact subset of $\bb R$ which contains the support of
$H$, the previous supremum is bounded above by
\begin{equation*}
C_0(H) \sup_{0\le t\le T}  \int_K \bb E  \Big[\, \big |  \rho_0
(Y(t|u)) - \rho_0 (Y_N(t|u)) \big | \, \Big]
 \, du \;.
\end{equation*}
Since $\rho_0$ is uniformly continuous, there exists $\delta>0$ for
which the previous is less than or equal to
\begin{equation*}
C_0(H) \varepsilon \; +\; C_0(H, \rho_0)
\int_K \bb P \Big[ \sup_{0\le t\le T} \big |  Y(t|u) - Y_N(t|u) \big | >
\delta \Big]  \, du \;.
\end{equation*}
By Lemma \ref{ufo} and the dominated convergence theorem, the second
expression vanishes as $N\uparrow\infty$ for every $\delta>0$. This
proves Claim \eqref{c5}. 

It follows from \eqref{c5} and Theorem \ref{mt1} that
\begin{equation*}
\lim _{N \to \infty} \bb P^{W,N}_{\mu_N} \Big[ \sup_{0\le t\le T}
\Big | \<\pi^N_t , H\> - \int _\RR H(u) P^N_t \rho_0 (u) \, du \Big
| \;
> \; \delta \Big] \;=\; 0\;.
\end{equation*}

Since, for each $N\ge 1$, $\{ \g_x (W,N) \,:\, x\in \ZZ \}$ has the
same distribution as $\{ \xi^{-1}_x \,:\, x\in \ZZ \}$,
\begin{eqnarray*}
&& \bb P^{W,N}_{\mu_N} \Big[ \sup_{0\le t\le T}  \Big | \<\pi^N_t ,
H\>
- \int _\RR H(u) P^N_t \rho_0 (u) \, du \Big | \; > \; \delta \Big] \\
&&  \quad \;=\;
\bb P^{\xi,N}_{\mu_N} \Big[ \sup_{0\le t\le T}  \Big | \<\pi^N_t , H\>
- \int _\RR H(u) P^{N,\xi}_t \rho_0 (u) \, du \Big | \; > \; \delta \Big]
\end{eqnarray*}
in distribution. In particular, Theorem \ref{mt2} follows from
Theorem \ref{mt1}. \qed \smallskip

\subsection{The tagged particle}
\label{5.4}
 
We examine in this subsection the asymptotic behavior of the tagged
particle.  As mentioned in Section \ref{sec1}, the law of large
numbers for the tagged particle in the case of compactly supported
initial density profiles is a direct consequence of the hydrodynamic
limit and the fact the relative order among particles is preserved by
the dynamics.  \smallskip

We first prove that the position of the tagged particle, $u_W(t)$, is
uniquely determined.

Recall the notation introduced in Section \ref{sec1} and assume that
the initial density profile $\rho_0$ belongs to $C_c(\bb R)$. It
follows from Proposition \ref{pierpaolo} that
\begin{equation*}
\rho_W(t,u) \;=\; (P_t \rho_0) (u)
\;=\; \int p_t(u,v) \rho_0(v) \, dv\;.
\end{equation*}
Since $p_t(u,v)$ is strictly positive, $\rho_W(t, \cdot)$ is strictly
positive as soon as $\rho_0$ is not identically equal to $0$. On the
other hand, since $P_t$ is a contraction in $L^1(\bb R)$, $\rho_W(t,
\cdot)$, $t\ge 0$, belongs to $L^1(\bb R)$. In particular, for each
$s\ge 0$, there exists a unique $u_W(s)$ in $\bb R$ such that
\begin{equation}
\label{c2}
\int_{-\infty}^{u_W(s)} \rho_W(s,v) \, dv \;=\; \int_{-\infty}^{0}
\rho_0(v) \, dv\; .
\end{equation}

The function $u_W(t)$ is continuous in time. Indeed, on the one hand,
it follows from \eqref{c2} with $s=t$, $t_n$, that 
\begin{equation*}
\Big\vert \, \int_{u_W(t)}^{u_{W} (t_n)} \rho_W(t,v) \, dv \, \Big\vert
\;\le\; \int_{\bb R} dv\, \big\vert  \rho_W (t,v) 
- \rho_W(t_n,v) \big\vert \;.
\end{equation*}
On the other hand, by Corollary \ref{s05} (iii), $\rho_W(t_n, \cdot)$
converges in $L^1(\bb R)$ to $\rho_W(t, \cdot)$ if $t_n\to t$.  Since
$\rho_W(t,\cdot)$ is strictly positive, $u_W(t_n)$ must converge to
$u_W(t)$. \smallskip

\noindent{\bf Proof of Theorem \ref{mt3}}.
Fix a density profile $\rho_0$ in $C_c(\bb R)$ and let $\rho_W(t,u) =
P_t \rho_0$. Observe that $\rho_W(t, \cdot)$ belongs to $L^1(\bb R)$
in virtue of Proposition \ref{pierpaolo}.  Consider a sequence
$\{\mu_N : N\ge 1\}$ of measures associated to $\rho_0$, conditioned
to have a particle at the origin and such that $\mu_N\{\eta(x) = 1\}
=0$ for $|x/N|$ large enough.  For $a$ in $\bb R$, denote by $H_a$ the
indicator function of the interval $[a,\infty)$: $H_a(u) = \mb
1\{[a,\infty)\} (u)$. We first claim that Theorem \ref{mt1} can be
extended to such test functions:
\begin{equation*}
\lim _{N \to \infty} \bb P^{W,N}_{\mu_N} \Big[ \sup_{0\le t\le T}
\Big | \<\pi^N_t , H_a\> - \int _a^\infty \rho_W (t,u) \, du \Big | \;
> \; \delta \Big] \;=\; 0
\end{equation*}
for all $\d>0$, $a$ in $\bb R$. The same statement holds for $\check
H_a = 1-H_a$ in place of $H_a$.

Indeed, consider a sequence of compactly supported continuous
functions $G_k$ (resp. $\check G_k$) increasing to $H_a$ (resp.
$\check H_a$). On the one hand, $\<\pi^N_t , H_a\> \ge \<\pi^N_t,
G_k\>$. On the other hand, $\<\pi^N_t , H_a\> \le N^{-1} \sum_x
\eta_t(x) - \<\pi^N_t, \check G_k\>$. Since the total number of
particles is conserved, this expression is equal to $N^{-1} \sum_x
\eta_0 (x) - \<\pi^N_t, \check G_k\>$. To conclude the proof of the
claim it remains to let $N\uparrow\infty$ and then $k\uparrow\infty$
and to recall that $\int \rho_0(u) du = \int \rho_W(t,u) du$.

We are now in a position to prove Theorem \ref{mt3}. Fix $\delta >0$
and assume that $x^N_t/N \ge u_W(t) + \delta$. Since the total numbers
to the right of the tagged particle doesn't change in time,
$\sum_{x\ge 0} \eta_0(x) = \sum_{x\ge x^N_t} \eta_t(x) \le
\sum_{x/N\ge u_W(t) + \delta} \eta_t(x)$. Dividing by $N$ and letting
$N \uparrow\infty$, by the previous observation, we get that
\begin{equation*}
\int_0^\infty \rho_0(u) du \;\le\; \int_{u_W(t) + \delta}^\infty
\rho_W(t,u) du\;.
\end{equation*}
This contradicts the definition of $u_W(t)$ because $\rho_W(t,\cdot)$
is strictly positive in view of Proposition \ref{pierpaolo}.
Similarly, one can prove that the event $ x_t^N /N \leq u_W(t)-
\delta$ has negligible probability as $N\uparrow \infty$. \qed
\smallskip

\noindent{\bf Proof of Theorem \ref{mt4}.}
Let $u^{W,N}_t$, the unique solution of
\begin{equation*}
\int_{-\infty}^{u^{W,N}_t} (P^{N}_t \rho_0)([u]_N) \, du\;=\;
\int_{-\infty}^{0} \rho_0(u) \, du\;.
\end{equation*}
Note that $u^{\xi,N}_t$ is uniquely determined by this equation
because $P^{N}_t \rho_0$ is strictly positive and, due to Lemma
\ref{s06}, is Lebesgue integrable.

As in the proof of Theorem \ref{mt2}, since $\{ \g_x (W,N) \,:\, x\in
\ZZ \}$ has the same distribution as $\{ \xi^{-1}_x \,:\, x\in \ZZ
\}$, the random variables
\begin{equation*}
\bb P^{\xi,N}_{\mu_N}
\Big[ \big | x^N_t/N - u^{\xi,N} (t) \big | \; > \; \delta \Big]
\quad\text{and}\quad 
\bb P^{W,N}_{\mu_N}
\Big[ \big | x^N_t/N - u^{W,N} (t) \big | \; > \; \delta \Big]
\end{equation*}
depending on $\xi$ and $W$, respectively, have the same law. 

We claim that for each $t>0$ and realization $W$, $u^{W,N} (t)$
converges to $u_W(t)$ as $N\uparrow\infty$. Indeed, since
$\int_{-\infty}^{u_W(t)} \rho_W(t,v) \, dv =  \int_{-\infty}^{u^{W,N}
  (t)} (P^N_t \rho_0)(v)\, dv$, 
\begin{equation*}
\Big\vert \, \int_{u_W(t)}^{u^{W,N} (t)} \rho_W(t,v) \, dv \, \Big\vert
\;\le\; \int_{\bb R} dv\, \big\vert (P^N_t \rho_0)(v) 
- (P_t \rho_0)(v) \big\vert \;.
\end{equation*}
By Lemma \ref{ff} (i), the right hand side vanishes as
$N\uparrow\infty$. Since, by Proposition \ref{pierpaolo}, $P_t \rho_0$
is strictly positive, the claim is proved.

In particular, by Theorem \ref{mt1}, for all $t>0$, $\delta>0$ and
a.a. $W$, 
\begin{equation*}
\lim_{N\to\infty} \bb P^{W,N}_{\mu_N}
\Big[ \big | x^N_t/N - u^{W,N} (t) \big | \; > \; \delta \Big]\;=\;0\;.
\end{equation*}
Theorem \ref{mt4} follows from the dominated convergence theorem, from
the first observation of the proof and from this last observation.
\qed\medskip

We conclude this section deriving a differential equation for the
asymptotic position of the tagged particle. Recall the definition of
the derivative $d/dW$ introduced in \eqref{malditesta} and let $\rho_t
(\cdot) = \rho_W(t, \cdot) = P_t \rho_0$.

\begin{lemma}
\label{sc1}
Fix a profile $\rho_0 : \bb R \to [0,1]$. Assume that
$\rho_0$ belongs to $C_c (\bb R)$ and that
\begin{equation*}
\lim_{h\to 0} \int_{\bb R} \Big\vert \frac{P_h \rho_t - \rho_t}h
- \mf L_W \rho_t\Big\vert \;=\; 0
\end{equation*}
for all $t>0$.  Then, $u_W$ is differentiable both from the right and
from the left for $t>0$ and
\begin{equation*}
\frac d{dt+} u_W(t) \;=\;
\left\{
  \begin{array}{ll}
{\displaystyle - \frac 1{\rho_t(u_W(t))} \frac {d \rho_t}{dW} (u_W(t))} &
{\displaystyle \text{if } \frac {d \rho_t}{dW} (u_W(t)) < 0} \\
{\displaystyle - \frac 1{\rho_t(u_W(t)-)} \frac {d \rho_t}{dW} (u_W(t))} &
{\displaystyle \text{if } \frac {d \rho_t}{dW} (u_W(t)) > 0} \\
\;\;0 & {\displaystyle \text{otherwise}}\;.
  \end{array}
\right.
\end{equation*}
\end{lemma}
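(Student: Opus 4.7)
The plan is to differentiate the identity $F(t, u_W(t)) = \int_{-\infty}^0 \rho_0(v)\,dv$, where $F(s,u) := \int_{-\infty}^u \rho_s(v)\,dv$, in $t$. By Proposition \ref{pierpaolo}, $\rho_s$ is strictly positive, so $F(s,\cdot)$ is strictly increasing and continuous; by Corollary \ref{s05}(ii)--(iii), $\rho_s \in L^1(\mathbb R)$ and $s \mapsto \rho_s$ is $L^1$-continuous. For $h > 0$ I would decompose
\begin{equation*}
0 \;=\; F(t+h, u_W(t+h)) - F(t, u_W(t)) \;=\; A(h) \;+\; B(h),
\end{equation*}
with space term $A(h) = \int_{u_W(t)}^{u_W(t+h)} \rho_{t+h}(v)\,dv$ and time term $B(h) = \int_{-\infty}^{u_W(t)} [\rho_{t+h}(v) - \rho_t(v)]\,dv$. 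The right derivative at $t$ will be extracted from the relation $A(h)/h = -B(h)/h$.

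By the hypothesis $(P_h\rho_t - \rho_t)/h \to \mathfrak{L}_W \rho_t$ in $L^1(\mathbb{R})$, the time term converges:
\begin{equation*}
\lim_{h \downarrow 0} \frac{B(h)}{h} \;=\; \int_{-\infty}^{u_W(t)} \mathfrak{L}_W\rho_t(v)\,dv \;=\; \frac{d\rho_t}{dW}(u_W(t)).
\end{equation*}
The last equality is the fundamental theorem of calculus applied to $\frac{d\rho_t}{dW}(u) = b + \int_0^u \mathfrak{L}_W\rho_t(z)\,dz$, together with $\lim_{u \to -\infty} \frac{d\rho_t}{dW}(u) = 0$. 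This vanishing follows from $\mathfrak{L}_W\rho_t \in L^1(\mathbb R)$ (the $L^1$-limit of $L^1$ functions, since $P_h\rho_t, \rho_t \in L^1$) and $\int_{\mathbb R} \mathfrak{L}_W\rho_t = 0$ (since $\int \rho_s\,dv$ is constant in $s$ by Corollary \ref{s05}(ii)), combined with $\rho_t \in C_{W,0}(\mathbb R)$: a nonzero limit of $\frac{d\rho_t}{dW}$ at $\pm\infty$ would force $\rho_t$ to grow like $c\,W(x)$.

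For the space term, strict monotonicity of $F(t+h, \cdot)$ identifies the sign of $u_W(t+h) - u_W(t)$ with the opposite of that of $B(h)$, hence asymptotically with the opposite of the sign of $\frac{d\rho_t}{dW}(u_W(t))$. If $\frac{d\rho_t}{dW}(u_W(t)) < 0$, then $u_W(t+h) > u_W(t)$ for small $h > 0$, and on $(u_W(t), u_W(t+h)]$ the c\`adl\`ag right-continuity of $\rho_t$ at $u_W(t)$ combined with the $L^1$-continuity of $s\mapsto\rho_s$ gives $A(h) = \rho_t(u_W(t))[u_W(t+h) - u_W(t)] + o(u_W(t+h) - u_W(t))$, producing the first branch of the formula. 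The case $\frac{d\rho_t}{dW}(u_W(t)) > 0$ is symmetric, with the integration interval lying to the left of $u_W(t)$ and the left limit $\rho_t(u_W(t)-)$ appearing in its place. In the degenerate case $\frac{d\rho_t}{dW}(u_W(t)) = 0$, $B(h)/h \to 0$ together with strict positivity of both one-sided values of $\rho_t$ at $u_W(t)$ forces $u_W'(t+) = 0$. The left derivative is treated by the same decomposition with $h < 0$ and yields the same formula, using continuity of $s \mapsto \mathfrak{L}_W\rho_s$ in $L^1$ to get the same limit in the time term. The principal obstacle is the uniform-in-$h$ replacement of $\rho_{t+h}(v)$ by the appropriate one-sided value $\rho_t(u_W(t)\pm)$ on the shrinking interval between $u_W(t)$ and $u_W(t+h)$; this is handled by an $\varepsilon$-argument that squeezes $A(h)$ between $[\rho_t(u_W(t)\pm) \pm \varepsilon][u_W(t+h) - u_W(t)] + o(h)$, the $o(h)$ error absorbing $\int_{u_W(t)}^{u_W(t+h)} |\rho_{t+h} - \rho_t|\,dv$ by $L^1$-continuity of $s \mapsto \rho_s$.
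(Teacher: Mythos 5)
Your proof follows essentially the same route as the paper's: differentiate the mass-conservation identity in $t$, identify the time term with $\int_{-\infty}^{u_W(t)}\mathfrak{L}_W\rho_t\,dv = \frac{d\rho_t}{dW}(u_W(t))$ via the integral representation of functions in $\mathfrak{D}_W$ and the vanishing of $d\rho_t/dW$ at infinity (your boundedness argument for this is a legitimate, slightly more direct variant of the paper's computation of the constant $c_t$), and read off the one-sided derivative from the space term using strict positivity and the c\`adl\`ag one-sided limits of $\rho_t$ at $u_W(t)$. One small correction: the estimate $\int_{u_W(t)}^{u_W(t+h)}|\rho_{t+h}-\rho_t|\,dv = o(h)$ does not follow from $L^1$-continuity of $s\mapsto\rho_s$ alone (that only gives $o(1)$); you need the $L^1$-differentiability hypothesis together with absolute continuity of $\int|\mathfrak{L}_W\rho_t|$ over the shrinking interval --- or, as the paper does, avoid the issue by keeping $\rho_t$ in the space term and letting the moving endpoint $u_W(t+h)$ appear in the time term instead.
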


Notice that while $\rho_t =P_t \rho_0$, which belongs to the domain of
the generator $\mf L_W$ in view of \cite{fjl2}, may have
discontinuities at $\{x_j :\, j\ge 1\}$, its derivative $d \rho_t/dW$
is continuous.  On the other hand, the definition of the infinitesimal
operator $\mf L_W$ and the fact that $\rho_t$ belongs to the domain of
the generator imply the uniform convergence of $h^{-1}(P_h \rho_t -
\rho_t)$ to $\mf L_W \rho_t$ on the whole real line. We are requiring
here the convergence to take place in $L^1(\bb R)$.  In particular,
$\mf L_W \rho_t$ belongs to $L^1(\bb R)$.

\begin{proof} 
Fix $t>0$ and take $h$ in $\bb R$ small.  Denote $\mf L_W \rho_t$ by
$\lambda_t$.  Since $\int_{(-\infty, u_W(t)]} \rho_t(v)$ $dv =
\int_{(-\infty, u_W(t+h)]} \rho_{t+h}(v) dv$,
\begin{equation*}
- \frac 1h \int_{u_W(t)}^{u_W(t+h)} \rho_t(v) \, dv
\;=\; \int_{-\infty}^{u_W(t+h)} 
\frac{\rho_{t+h}(v) - \rho_t(v)}h \, dv \;.
\end{equation*}
By the main assumption of the lemma, we may replace on the right hand
side the ratio $\{\rho_{t+h}(v) - \rho_t(v)\}/h$ by $\lambda_t$ paying
a price of order $o(h)$. Since $u_W(t+h)$ converges to $u_W(t)$ as
$h \to 0$, we get that
\begin{equation*}
\int_{-\infty}^{u_W(t)} \lambda_t(v) \, dv
\;=\; - \lim_{h\to 0}  \frac 1h \int_{u_W(t)}^{u_W(t+h)} 
\rho_t(v) \, dv\;.
\end{equation*}
We assume that $h>0$ and compute the right derivative of $u_W(t)$.
The left derivative is left to the reader.  Assume that
$\int_{(-\infty, u_W(t)]} \lambda_t(v) dv >0$ so that $u_W(t+h) <
u_W(t)$ for $h$ sufficiently small.  Since $\rho_t$ is a c\`adl\`ag
function and $u_W(t+h) < u_W(t)$ for $h$ sufficiently small, it
follows from the previous indentity that
\begin{equation*}
\lim_{h\downarrow 0}  \frac {u_W(t+h) - u_W(t)}h
\;=\; - \frac 1{\rho_t(u_W(t)-)}
\int_{-\infty}^{u_W(t)} \lambda_t(v) \, dv \;.
\end{equation*}
Similar identities can be obtained if $\int_{(-\infty, u_W(t)]}
\lambda_t(v) dv $ vanishes or is less than $0$ and for $h\uparrow 0$.
Thus, $u_W(\cdot)$ is differentiable both from the right and from the
left. To prove the lemma it remains to show that
\begin{equation*}
\frac {d \rho_t}{dW} (u_W(t)) \;=\;  \int_{-\infty}^{u_W(t)}
\lambda_t(v) \, dv \;.
\end{equation*}

Since $\lambda_t = \mf L_W \rho_t$, we have that
\begin{equation*}
\rho_t (u) \;=\; a_t \;+\; b_t W(u) \;+\; \int_0^u W(dv)\,
\int_0^v \lambda_t (w) \, dw
\end{equation*}
for some finite constants $a_t$, $b_t$. In particular, for any $u<v$,
\begin{equation*}
\frac {d \rho_t}{dW} (v) \;=\; \frac {d \rho_t}{dW} (u)
\;+\; \int_u^v \lambda_t (w) \, dw\;.
\end{equation*}
Since $\lambda_t$ belongs to $L^1(\bb R)$, letting $u\downarrow
-\infty$, we find that $d \rho_t/dW (u)$ converges to some constant
$c_t$ and that
\begin{equation*}
\frac {d \rho_t}{dW} (v) \;=\; c_t
\;+\; \int_{-\infty}^v \lambda_t (w) \, dw\;.
\end{equation*}
Take $u>0$.  Since $W(0) = 0$, integrating this identity with respect
to $dW$ in the interval $(0,u]$ we get that
\begin{equation*}
\rho_t (u) \;-\; \rho_t (0) \;=\; c_t W(u) \;+\;
\int_{(0,u]} W(dv) \int_{-\infty}^v \lambda_t(w) \, dw\;.
\end{equation*}
Dividing by $W(u)$, since $\rho_t$ is uniformly bounded and since
$\lim_{u\to\infty} W(u) = \infty$
\begin{equation*}
c_t \;=\; \lim_{u\to\infty}\frac {-1}{W(u)}
\int_{(0,u]} W(dv) \int_{-\infty}^v \lambda_t(w) \, dw \;.
\end{equation*}
Since the $L^1(\bb R)$ norm of $\rho_t$ is constant in time,
$\int_{-\infty}^\infty \lambda_t(w) \, dw =0$ for all $t>0$.  In
particular, the previous expression vanishes.  This concludes the
proof of the lemma.
\end{proof}

\medskip
\noindent{\bf Acknowledgments.} The authors wish to thank the Centro di
Ricerca Matematica Ennio De Giorgi of the Scuola Normale Superiore,
Pisa, Italy, where part of this work was done.

\end{document}